\newtheorem{thm}{Th\'eor\`eme}[section]
\newtheorem{prop}[thm]{Proposition}
\newtheorem{lemma}[thm]{Lemme}
\newtheorem{cor}[thm]{Corollaire}
\newtheorem{situation}[thm]{Situation}
\theoremstyle{definition}
\theoremstyle{remark}
\newtheorem{rmk}[thm]{Remarque}
\numberwithin{equation}{section}
\newcommand{\Gal}{{\rm Gal}}
\newcommand{\Br}{{\rm Br}}
\newcommand{\Q}{\mathbb Q}
\newcommand{\R}{\mathbb R}
\newcommand{\C}{\mathbb C}
\newcommand{\Z}{\mathbb Z}
\renewcommand{\P}{\mathbb P}
\newcommand{\Spec}{\operatorname{Spec}}
\newcommand{\A}{\mathbb A}
\renewcommand{\phi}{\varphi}
\newcommand{\cyrit}{\fontencoding{OT2}\selectfont\textcyrit}
\def\Pic{{\rm Pic}}
\def\Ker{{\rm Ker}}
\def\Im{{\rm Im}}
\def\Div{{\rm Div}}
\title[Fibrations en   quadriques r\'eelles]{Certaines fibrations en surfaces quadriques   r\'eelles}
\author{Jean-Louis Colliot-Th\'el\`ene et Alena Pirutka}
\address{Universit\'e Paris-Saclay, CNRS, Laboratoire de math\'ematiques d'Orsay, 91405, Orsay, France.}
\email{jean-louis.colliot-thelene@universite-paris-saclay.fr}
\address{Courant Institute of Mathematical Sciences, New York University, New York, 10012, U.S.A.}\email{pirutka@cims.nyu.edu}
\date{article soumis le 20 juin 2024;  rapport  du 28 juillet 2025 ; version r\'evis\'ee soumise le 4 septembre 2025; accept\'e le 2 f\'evrier 2026}
\begin{document}

	\maketitle
	
\begin{abstract} 
\vspace{-1cm}  

 Soit $X/\R$ une vari\'et\'e projective et lisse sur le corps des r\'eels,
g\'eom\'e\-tri\-quement rationnelle,
 telle que l'espace $X(\R)$ est non vide et connexe.
La vari\'et\'e $X$ est-elle rationnelle sur $\R$ ? Est-elle stablement rationnelle ?
Est-elle r\'etracti\-lement rationnelle ?  Est-elle universellement $CH_{0}$-triviale ?
 
Dans cet article on consid\`ere le cas des solides $X$  fibr\'es en surfaces quadriques sur la droite projective.
Nous donnons un exemple pour lequel $X$ a un groupe de Brauer non trivial et donc
la r\'eponse aux questions ci-dessus est n\'egative.

Lorsque toutes les fibres g\'eom\'etriques  sont int\`egres,
le groupe de Brauer  de $X$ est trivial.    Nous examinons une classe de telles fibrations pour lesquelles
de plus la m\'ethode des torseurs de jacobiennes interm\'ediaires pour infirmer la rationalit\'e
ne donne pas d'obstruction.
 
Pour $X$ dans cette classe, 
en utilisant  le troisi\`eme groupe de cohomologie non ramifi\'ee de $X$
sur tout corps contenant $\R$, nous exhibons un invariant permettant de
d\'ecider si  $X$ est  universellement $CH_{0}$-triviale. Par deux m\'ethodes 
ind\'ependantes,
l'une
 utilisant
  les sommes de carr\'es, l'autre la multiplication complexe, 
nous obtenons ainsi de nombreuses  classes
 de  telles fibrations  pour lesquelles  $X$ est  universellement $CH_{0}$-triviale.
 
 \bigskip

\noindent \textsc{Abstract.} We consider the question whether a real threefold $X$ fibred into quadric surfaces
 over the real projective line is stably rational (over $\R$) if the topological space $X(\R)$ is connected. 
 We give a counterexample. When all geometric fibres are irreducible, the question is open.
 We investigate a family of such fibrations for which the intermediate jacobian technique 
 gives no obstruction to rationality.
For $X$ in this class, we use the third unramified cohomology group of
$X$ over any field containing $\R$ to produce an invariant which
characterizes decomposition of the diagonal for $X$ over $\R$.
We then produce two independent methods (sums of squares and complex multiplication) which in many cases but not all
 enable us  to prove decomposition of the diagonal.
 \end{abstract}

 \section{Introduction}
 
  \subsection{Rappels}

Soit $k$ un corps.
Une $k$-vari\'et\'e est un $k$-sch\'ema s\'epar\'e de type fini. Une $k$-vari\'et\'e int\`egre
est dite {\it $k$-rationnelle} si elle est $k$-birationnelle \`a un espace projectif $\P^n_{k}$.
Une $k$-vari\'et\'e int\`egre $X$ est dite {\it stablement $k$-rationnelle} s'il existe des espaces projectifs
$\P^n_{k}$ et $\P^m_{k}$ tels que $X \times_{k}\P^n_{k}$ est $k$-birationnel \`a $\P^m_{k}$.
Une $k$-vari\'et\'e int\`egre $X$ est dite {\it r\'etractilement rationnelle}
s'il existe des ouverts de Zariski non vides $U \subset X$ et $V \subset \P^m_{k}$ ($m$ convenable),
 et des $k$-morphismes $ f : U \to V$ et $g : V \to U$ tels que le compos\'e $g \circ f$ est l'identit\'e de $U$.
 Une $k$-vari\'et\'e int\`egre stablement $k$-rationnelle sur un corps infini $k$ est r\'etractilement rationnelle.

\medskip

Pour $X$ une $k$-vari\'et\'e,
 on note $Z_{0}(X)$ le groupe des z\'ero-cycles sur $X$,
c'est-\`a-dire le groupe ab\'elien libre sur les points ferm\'es de $X$.
On note $CH_{0}(X)$ le groupe de Chow des z\'ero-cycles, quotient de $Z_{0}(X)$
par l'\'equivalence rationnelle.
Pour $X$ une $k$-vari\'et\'e propre,  l'application qui \`a un point ferm\'e $P$
associe le degr\'e $[k(P):k]$ du corps r\'esiduel $k(P)$ s'\'etend en un homomorphisme
$ \mathrm{deg}_{k} : CH_{0}(X) \to \Z$. On note $A_{0}(X)$ le noyau de cet homomorphisme.

On dit qu'une $k$-vari\'et\'e propre g\'eom\'etriquement int\`egre $X$  est   universellement $CH_{0}$-triviale si pour tout corps $F$ contenant $k$ 
la fl\`eche $ \mathrm{deg}_{F} : CH_{0}(X_{F}) \to \Z$ est un isomorphisme
(voir \cite{ACTP}). 
Sous l'hypoth\`ese que la $k$-vari\'et\'e $X$, de corps des fonctions $k(X)$, 
est lisse et  poss\`ede un z\'ero-cycle $z$ de degr\'e 1,
 ceci est \'equivalent \`a l'\'enonc\'e : $A_{0}(X_{k(X)})=0$,
 ou encore \`a l'existence d'une d\'ecomposition (enti\`ere) de la diagonale
 de $X\times_{k}X$, c'est-\`a-dire \`a l'\'enonc\'e : le point g\'en\'erique de
 $X$, vu comme point de $X_{k(X)}$, est rationnellement \'equivalent
sur $X_{k(X)}$  au z\'ero-cycle $z\times_{k}k(X)$.
  \cite[Lemma 1.3]{ACTP}.

Une $k$-vari\'et\'e g\'eom\'etriquement int\`egre propre et lisse qui est r\'etractilement rationnelle
est universellement $CH_{0}$-triviale \cite[Lemme 1.5]{CTPENS}.

Les groupes $H^{i}(k,\mu)$ sont les groupes de cohomologie galoisienne, o\`u $\mu$ est un  module galoisien de torsion premi\`ere \`a la
  caract\'eristique de $k$. On note $$H^{i}_{nr}(k(X)/k,\mu)\subset H^{i}(k(X),\mu)$$ les groupes de cohomologie non ramifi\'ee
d'une $k$-vari\'et\'e int\`egre $X$ de corps des fonctions $k(X)$ (voir \cite{Santa}).

Soit $X$ une $k$-vari\'et\'e propre, lisse, g\'eom\'etriquement connexe.
 Si $X$ est  r\'etractilement rationnelle,
alors, pour tout module galoisien  $\mu$ de torsion  premi\`ere \`a la
caract\'eristique,   tout entier $i \geq 0$, et tout corps $L$ contenant $k$,
l'application $H^{i}(L,\mu) \to H^{i}_{nr}(L(X)/L,\mu)$
est un isomorphisme. En particulier l'application $\Br(L) \to \Br_{nr}(L(X)/L)=\Br(X_{L})$
est un isomorphisme sur la torsion premi\`ere \`a la caract\'eristique.

Pour $k=\R$,  un invariant birationnel stable  des $\R$-vari\'et\'es projectives et lisses g\'eom\'etriquement connexes 
est le nombre $s$ de composantes connexes de l'espace topologique $X(\R)$
associ\'e \`a une telle $\R$-vari\'et\'e $X$.
Cet invariant $s$ peut \^etre calcul\'e de diverses mani\`eres.  Supposons $X(\R) \neq \emptyset$.

On a  $ A_{0}(X)/2 \simeq (\Z/2)^{s-1}$  \cite{CTIschebeck}.

Pour tout $i > {\rm dim}(X)$, on a $H^{i}_{nr}(\R(X)/\R, \Z/2) \simeq (\Z/2)^{s}$ \cite{CTP}.

Si $X/\R$ est stablement rationnelle sur $\R$,
 ou m\^eme simplement  r\'etractilement rationnelle sur $\R$, alors l'espace topologique $X(\R)$ est connexe non vide. 
 
 On trouvera dans \cite{ACTP} et  \cite{CTter}  des  d\'emonstrations et des r\'ef\'erences  pour les \'enonc\'es ci-dessus.

Pour tout groupe ab\'elien $A$ et tout entier $n>1$, on note $A[n] \subset A$ le sous-groupe
form\'e des \'el\'ements annul\'es par $n$.

 \subsection{Fibrations en quadriques}

 Soit $f: X \to \P^1_{\R}$ une famille de quadriques de dimension relative $r\geq 1$ sur la droite projective r\'eelle,
de fibre g\'en\'erique lisse, et d'espace total $X/\R$ projectif et lisse.
La fibration $f_{\C} : X_{\C} \to \P^1_{\C}$ admet une section (Max Noether, Tsen),
donc  la $\C$-vari\'et\'e $X_{\C}$ est   $\C$-rationnelle. 
Si $r=1$ et si l'espace $X(\R)$ est non vide et connexe, la surface $X$ est rationnelle sur $\R$ (Comessatti \cite{Com}).
 
Pour toute famille de quadriques  $X \to \P^1_{\R}$ de dimension relative
au moins 2, et aussi pour les familles de quadriques
$X \to \P^m_{\R}$ de dimension relative au moins 1 qui poss\`edent une
section rationnelle sur les complexes, on peut  se poser la question si la connexit\'e de $X(\R)$
 suffit \`a assurer la rationalit\'e de $X$. Dans \cite{BW}, O. Benoist et O. Wittenberg
ont \'etudi\'e des exemples de telles fibrations de dimension relative~1 sur $\P^2_{\R}$
poss\'edant une section sur $\C$, donc rationnelles sur $\C$ (voir aussi \cite{FJSVV} et \cite{JJ}).
La technique des torseurs de jacobiennes interm\'ediaires \cite{HT, BW} permet dans certains
cas de montrer la non $\R$-rationalit\'e de certains solides -- mais ne dit rien sur la rationalit\'e stable,
ni non plus sur la $CH_{0}$-trivialit\'e universelle.

  \bigskip 
  
  Dans cet article, on s'int\'eresse aux solides $X$
  fibr\'es en surfaces quadriques sur la droite projective $\P^1_{k}$ sur un corps $k$ de caract\'eristique diff\'erente de 2.
  Ceux-ci ont \'et\'e \'etudi\'es dans \cite{Sk, CTSk}.
  
  On dit qu'une telle fibration $X \to \P^1_{k}$ est de type (I) si toutes
  les fibres g\'eom\'e\-triques sont int\`egres, auquel cas on peut
  se ramener \`a supposer que toute 
  fibre singuli\`ere
   est un c\^{o}ne sur
  une conique lisse.  
Si $X$ est de type (I), alors  pour toute extension   $L/k$ du corps de base, on a  $\Br(X_{L})/\Br(L)=0$
(voir  \cite[Cor. 3.2 (a), cas $f=0$]{Sk} et \cite[Thm. 2.3.1, cas $T=\emptyset$]{CTSD}). 
Si $X$ n'est pas de type (I),  la non $k$-rationalit\'e stable de $X$ peut souvent \^etre d\'etect\'ee par un calcul de groupe de Brauer \cite[Cor. 3.1, Lemma 3.5]{Sk} \cite[Thm. 2.3.1]{CTSD}.
 {\it Dans le cas $k=\R$ nous donnons ainsi un exemple de telle fibration en surfaces quadriques sur $\P^1_{\R}$ avec $X(\R)$ connexe et $\Br(X)/\Br(\R)\neq 0$, et donc $X$ non stablement rationnelle }(voir la section  \ref{contreexemple}).

Comme nous  l'a  indiqu\'e O. Wittenberg (d\'ecembre 2023), pour $X \to \P^1_{\R}$ de type (I),
la technique de \cite{HT} et \cite{BW} permet de d\'etecter la non rationalit\'e sur $\R$ de nombreuses familles de vari\'et\'es fibr\'ees  en surfaces  quadriques sur $\P^1_{\R}$ pour lesquelles de plus $X(\R)$ est connexe
non vide. 
On peut  en fait d\'ej\`a
donner de tels exemples de non rationalit\'e \`a partir d'intersections lisses de deux quadriques dans $\P^5_{\R}$, comme on voit en combinant le th\'eor\`eme 36  et 
 la section
 11.4  de \cite{HT}. 

Ces divers exemples donnent une r\'eponse n\'egative \`a une question de S. Zimmermann
sur la rationalit\'e des solides r\'eels $X$  fibr\'es en quadriques sur la droite projective
 lorsque l'espace topologique
associ\'e $X(\R)$ est connexe.

\subsection{Une classe particuli\`ere de fibrations en surfaces quadriques}\label{classebeta}
   
  Soit $k$ un corps de caract\'eristique diff\'erente de $2$. Supposons   $X \to \P^1_{k}$   de type (I) et  supposons qu'il y a au moins une fibre g\'eom\'etrique non lisse. On associe \`a une telle fibration une courbe discriminant $\Delta$, lisse et g\'eom\'etriquement connexe,  qui est un  rev\^{e}tement  double de $\P^1_{k}$, et une  classe $\beta \in \Br(\Delta)$, qui est une classe de quaternions dans $\Br(k(\Delta))$.

  Lorsque la classe $\beta \in \Br(\Delta)$ n'est pas dans l'image de $\Br(k)$
   et que le nombre de fibres g\'eom\'etriques singuli\`eres de $X \to \P^1$
   est au moins 6, la technique de \cite{BW}
    permet     souvent  de
   d\'etecter la non $k$-rationalit\'e de $X$
    (Wittenberg, voir le th\'eor\`eme \ref{wittenberg23} ci-dessous).
   
   Le cas o\`u la classe $\beta$ est constante, de la forme $\beta=(a,b)$
   avec $a,b\in k^{\times}$,
   correspond exactement
    aux fibrations qu'on peut donner en coordonn\'ees
   affines par une \'equation
   $$x^2-ay^2-bz^2 = q(u)$$
   avec  $a,b \in k^{\times}$ et $q(u)$ polyn\^ome s\'eparable.

  Dans le cas $k=\R$, $a=b=-1$ et $X(\R) \neq \emptyset$,   il y a un nombre pair $2d$ de
  points de 
  $\P^1({\R})$ 
  dont la fibre n'est pas lisse. Ce sont les points o\`u $q(u)$ s'annule, et le point
  \`a l'infini si $q(u)$ est de degr\'e impair.
  Si $d=0$, alors $q(u)$ est positif, donc est une somme de deux carr\'es dans $\R[u]$.
  La fibration $X \to \P^1_{\R}$ d\'efinie par $u$ admet  alors une section, la fibre g\'en\'erique
  est une quadrique lisse avec un point rationnel, son corps des fonctions est une extension
  transcendante pure de $\R(u)$, et donc   $X$
 est  une vari\'et\'e $\R$-rationnelle. 
 Supposons  $d>0$.
  L'espace topologique $X(\R)$ a $d$ composantes connexes.
  Si $d>1$, alors $X$ n'est pas stablement rationnelle, ni m\^{e}me
  universellement $CH_{0}$-triviale. 
  On est ainsi ramen\'e \`a consid\'erer les \'equations de la forme
  \begin{equation}\label{eq}
   x^2+y^2+ z^2 = u.p(u)
   \end{equation}
   avec $p(u)$ polyn\^{o}me s\'eparable unitaire de degr\'e pair qui est strictement
   positif sur $\R$.

   Dans ce cas, $X(\R)$ est connexe, et la question de la rationalit\'e de 
   ces fibrations en quadriques ne peut \^etre trait\'ee par les m\'ethodes
   de \cite{HT}, \cite{BW}, comme indiqu\'e 
   \`a la  section  
  \ref{IJTWitt}.

  \subsection{\'Enonc\'es et structure de l'article} 
    Dans cet article, nous \'etudions la question de la $CH_{0}$-trivialit\'e (universelle) des vari\'et\'es (\ref{eq}).  Soit $W=X \times_{\R} \Delta$.
En utilisant le travail \cite{CTSk}, nous
 r\'eduisons la question de la $CH_{0}$-trivialit\'e universelle de $X$ 
\`a la nullit\'e d'une classe pr\'ecise dans $H^3_{nr}(\R(W)/\R, \Z/2)$,
groupe dont nous \'etablissons la finitude (th\'eor\`eme \ref{finitude} et remarque \ref{remfinitude}).
On montre (voir th\'eor\`emes \ref{criterenullitecycle} et  \ref{equivalencesR}):

\begin{thm}\label{annoncethm}
Soit 
 $p(u)\in \R[u]$ s\'eparable unitaire, non constant, de degr\'e pair, avec $p(0)\neq 0$,
 positif sur $\R$.
Soit   $\pi: X \to \P^1_{\R}$ une fibration en surfaces quadriques donn\'ee
au-dessus de $\A^1_{\R}=\Spec(\R[u])$ par l'\'equation
$$ x^2+y^2+z^2 - u.p(u). t^2=0.$$ 
 Soit $\Delta$ la courbe  r\'eelle projective et lisse d'\'equation
 $$w^2= vp(-v).$$  
Soit $W= X \times_{\R} \Delta$.

Alors la classe $(u+v,-1, -1)\in H^3(\R(W),\Z/2)$ est non ramifi\'ee:
$$(u+v,-1,-1) \in H^3_{nr}(\R(W)/\R,\Z/2).$$
Les conditions suivantes sont \'equivalentes :
\begin{itemize}
\item [(i)] Cette classe est nulle.

\item [(ii)] La fonction $u+v$ est une somme de 4 carr\'es dans $\R(W)$.

\item [(iii)] Pour tout corps $F$ contenant $\R$, on a $$A_{0}(X_{F})=0.$$
\end{itemize}
 \end{thm}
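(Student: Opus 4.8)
The plan is to establish the three-way equivalence by treating the unramified-class assertion as the organizing invariant and then relating each of (i), (ii), (iii) to it in turn. First I would verify that the explicit class $(u+v,-1,-1) \in H^3(\R(W),\Z/2)$ is unramified. The residues of a symbol class are computed at the codimension-one points of a smooth model of $W$; since $-1$ is a fixed nonzero real constant, the only possible ramification comes from the first slot $u+v$, so one must check the residues along the divisors where $u+v$ vanishes or acquires a pole. On the locus $u+v=0$ the residue is the symbol $(-1,-1)$ restricted to that divisor, and the point of the construction (via the equation $w^2=vp(-v)$ defining $\Delta$, so that on $u+v=0$ one has $u=-v$ and the fibre quadric $x^2+y^2+z^2=u\cdot p(u)\cdot t^2$ becomes governed by $w^2=vp(-v)$) is that $(-1,-1)$ becomes trivial there, because $-u\cdot p(u)=vp(-v)=w^2$ is a square, forcing the residue to vanish. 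This is exactly the geometric compatibility that the product $W=X\times_\R\Delta$ is engineered to produce, and I expect the residue computation at the remaining divisors (poles at infinity, etc.) to be routine by the same substitution.

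\medskip

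Next I would treat the equivalence (i) $\Leftrightarrow$ (ii). Over $\R$, a class of the form $(f,-1,-1)\in H^3(F,\Z/2)$ for a field $F\supseteq\R$ encodes, via the standard dictionary between mod-$2$ Galois cohomology and quadratic forms (Arason, Pfister, Milnor conjecture in low degree), the behaviour of the Pfister form $\langle\!\langle -f,-1,-1\rangle\!\rangle$, i.e. the form $\langle 1,-1\rangle\otimes\langle 1,1\rangle\otimes\langle 1,1\rangle$ scaled by $f$; concretely $(f,-1,-1)=0$ in $H^3$ is equivalent to $f$ being a sum of four squares in $F$ (since $(-1,-1)$ corresponds to the quaternion algebra of the norm form $x^2+y^2+z^2+w^2$, whose represented elements are exactly the sums of four squares). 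Applying this with $F=\R(W)$ and $f=u+v$ gives (i) $\Leftrightarrow$ (ii) directly. I would spell out the Pfister-form translation carefully because the degree-three symbol must be matched with the $3$-fold Pfister form and its pure part.

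\medskip

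The main work, and the principal obstacle, is the equivalence with (iii), namely that universal $CH_0$-triviality of $X$ is controlled by the vanishing of this single class in $H^3_{nr}(\R(W)/\R,\Z/2)$. Here I would invoke the reduction announced just before the theorem: following \cite{CTSk}, the universal $CH_0$-triviality of a type-(I) fibration in quadric surfaces of the given shape is equivalent to the vanishing of a specific unramified degree-three cohomology class attached to $W=X\times_\R\Delta$. The strategy is to identify the class produced by the \cite{CTSk} machinery with our explicit symbol $(u+v,-1,-1)$, so that its vanishing for all base-field extensions $F\supseteq\R$ becomes equivalent to $A_0(X_F)=0$ for all such $F$. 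Concretely, one uses that for $X$ of type (I) one has $\Br(X_F)/\Br(F)=0$ (stated in the excerpt), which removes the degree-two obstruction and pushes the obstruction to $CH_0$-triviality into degree three; the Chow-group vanishing $A_0(X_F)=0$ is then governed by the unramified $H^3$ through the universal triviality criterion $A_0(X_{\R(X)})=0$ of \cite[Lemma 1.3]{ACTP}. The delicate point I anticipate is the precise bookkeeping in matching the abstract \cite{CTSk} class with the concrete symbol, and in ensuring the equivalence holds uniformly over all extensions $F$ of $\R$ rather than just over $\R$ itself; this is what forces one to prove that the class is unramified over every such $F$, and it is the reason (ii) is phrased as a sum-of-four-squares condition in the function field, a condition which is automatically inherited under field extension.
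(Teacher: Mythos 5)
Your treatment of (i) $\Leftrightarrow$ (ii) is correct and is the same as the paper's (Proposition \ref{classique}: by Arason and Merkurjev--Suslin, $(u+v,-1,-1)=0$ in $H^3(\R(W),\Z/2)$ if and only if $u+v$ is represented by $<1,1,1,1>$ over $\R(W)$). The serious gap is in your handling of (iii). You propose to ``invoke the reduction announced just before the theorem'' and to ``identify the class produced by the \cite{CTSk} machinery with our explicit symbol,'' but that reduction \emph{is} the statement under proof, so as written your argument is circular. What \cite{CTSk} actually provides (Theorem \ref{TheoremeCTSk}) is, for each field $F$ separately, an embedding of $A_0(X_F)$ into the quotient of $H^3_{nr}(F(\Delta)/F,\Z/2)$ by the subgroup of classes $(e)\cup(-1)\cup(-1)$ with $e\in F^{\times}$ a constant; it says nothing about $W$ or about the symbol $(u+v,-1,-1)$. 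The missing content is exactly the paper's Lemma \ref{mainlemma} and Theorem \ref{maincritere}: (a) compute the image under this embedding of $m-m_{\infty}$ for $m\in X(F)$ lying over $c\in\A^1(F)$ with $c\,p(c)\neq 0$, namely the class of the function $u-c$; (b) kill the constant ambiguity $e$ by specializing at the rational point $v=w=0$ of $\Delta_F$ (this is where $p(0)\neq 0$ and $c\neq 0$ are used), so that vanishing in the \emph{quotient} becomes vanishing of an honest symbol; (c) use that $c\,p(c)$ is represented by $<1,1,1>$ at the point $m$, together with positivity of $p$ (i.e.\ $(p(u),-1,-1)=0$), to get $(c,-1,-1)=0$ and hence reduce the criterion to $(c+v,-1,-1)=0$ in $H^3(F(\Delta),\Z/2)$; (d) apply this with $F=\R(X)$ and $m$ the generic point, so $c=u$ and the criterion becomes the vanishing of $(u+v,-1,-1)\in H^3(\R(W),\Z/2)$, and conclude by decomposition of the diagonal that this is equivalent to $A_0(X_F)=0$ for all $F$. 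Steps (b)--(d) are precisely the ``bookkeeping'' you defer; they are the heart of the equivalence, and neither implication between (i) and (iii) is established without them.

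On the unramifiedness assertion your route is genuinely different from the paper's: you check residues at codimension-one points of the smooth proper model $W=X\times_{\R}\Delta$ and appeal to purity, whereas Theorem \ref{criterenullitecycle} handles \emph{all} discrete valuation rings over an arbitrary field $k$ and arbitrary $a,b$, via Cassels--Pfister and a three-case analysis on $\omega(u),\omega(v)$; your approach is legitimate (and lighter) for the real case, but it is incomplete as executed. Your computation on the component $u+v=0$ is right: there $-u\,p(u)=v\,p(-v)=w^2$, the fibre quadric becomes $x^2+y^2+z^2+(wt)^2=0$, and $(-1,-1)$ dies over its function field. But the pole divisors are \emph{not} ``routine by the same substitution.'' Along $X_{\infty}\times\Delta$, where $X_{\infty}$ is the fibre of $X$ at $u=\infty$, the function $u+v$ has valuation $-1$ (odd, since that fibre has multiplicity one in the standard model), and the vanishing of $(-1,-1)$ in the residue field rests on a different fact: $X_{\infty}$ is a cone over the conic $x^2+y^2+z^2=0$, whose function field splits $(-1,-1)$. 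Along $X\times\{\infty_{\Delta}\}$ the valuation is $-2$ (even) because $v$ has a double pole on $\Delta$; this, the multiplicity claims, and the irreducibility of the closure of $\{u+v=0\}$ all need to be verified. These checks are feasible, so this second gap is fillable, but your proposal covers only one of the divisors that actually carry the potential ramification.
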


 Nous donnons ensuite {\it 
 plusieurs
 classes de vari\'et\'es $X$ pour lesquelles
nous pouvons montrer que l'\'el\'ement $(u+v,-1,-1)$ est nul,
et donc \'etablir que ces vari\'et\'es $X$ sont universellement $CH_{0}$-triviales.}

Plus pr\'ecis\'ement,
 on \'etablit que {\it $X$ est   universellement $CH_{0}$-triviale} dans les cas suivants:
\begin{enumerate}
\item (th\'eor\`eme \ref{ex2}) $p(u)=u^2+au+b\in \R[u]$ est un polyn\^ome s\'eparable tel que
	$$b\geq \frac{a^2}{3};$$ 
	\item  (th\'eor\`eme \ref{ex2n}) $p(u)=u^{2n}+\sum_{i=0}^{n-1}a_{2i}u^{2i}
	\in \R[u]$ s\'eparable, satisfaisant $$a_0>0\mbox{ et } 
	a_{2i}\geq 0 \mbox{ pour tout } 0<i<n;$$
	\item (th\'eor\`eme \ref{emc}) 
	 le polyn\^{o}me $p(u)$ est de degr\'e 2,
	  la courbe elliptique
   $$ E: z^2=u.p(u)$$
   est \`a multiplication complexe, et  l'on a $$End_{\C}(E) =
\Z[\omega],$$  o\`u $\omega \in \C$ est un entier quadratique imaginaire, satisfaisant une \'equation
$$\omega^2 - d \omega +c =0, \mbox{ o\`u }d,c \in \Z\mbox{ et }d \mbox{ est impair}.$$
\end{enumerate}

Comme on verra 
\`a la section 
\ref{scomp}, pour un polyn\^ome $p$ positif,  l'hypoth\`ese faite dans le th\'eor\`eme   \ref{ex2}
\'equivaut \`a l'hypoth\`ese que l'invariant $j(E) \in \R$ de la courbe elliptique $E$ d'\'equation  $ z^2=u.p(u)$ est un r\'eel positif ou nul.

\bigskip

\`A la section 
\ref{rap}, on donne quelques rappels sur les  formes quadratiques et sur l'article \cite{CTSk}. \`A la section \ref{surk} on \'etudie les fibrations en surfaces quadriques $X\to \P^1_k$  de type (I) d'\'equation affine
$$x^2-ay^2-bz^2=u.p(u)$$
 sur un corps $k$ arbitraire de caract\'eristique diff\'erente de $2$. Sous la  condition technique que le polyn\^ome $p$ est repr\'esent\'e par la forme
quadratique
  $$<<a,b>>=<1,-a,-b,ab>$$ 
  sur le corps $k(u)$, on donne un crit\`ere pour la $CH_0$-trivialit\'e 
 universelle de la vari\'et\'e $X$.

Aux sections \ref{sectionreel} \`a \ref{scomp} on s'int\'eresse au cas $k=\mathbb R$.

 \`A la section \ref{sectionreel}, on sp\'ecialise le crit\`ere ci-dessus
 au cas
des vari\'et\'es $X/\R$ d'\'equation affine
 $$x^2+y^2+z^2=u.p(u)$$
 avec $p(u)$ de degr\'e pair,  strictement positif sur $\R$.
 
    \`A la section  \ref{sectioncarres}, pour
   deux classes 
   de telles  vari\'et\'es $X/\R$,   
  pour $W=X \times_{\R} \Delta$ comme dans le th\'eor\`eme \ref{annoncethm},
  on \'etablit  par une m\'ethode \'el\'ementaire que la fonction $u+v$ est une somme de 4 carr\'es dans $\R(W)$,
  et donc que $X$ est universellement $CH_{0}$-triviale.
 
 La section \ref{lemmescoho} regroupe des \'enonc\'es de cohomologie galoisienne.
 
\`{A} la section \ref{fibconiques}, 
on utilise un mod\`ele lisse birationnel (non propre) de  $X$ qui est une fibration en coniques sur le plan $\A^2_{\R}$,
 laquelle  d\'eg\'en\`ere le long d'un ouvert d'une courbe hyperelliptique $\Gamma$ qui est une tordue de la courbe $\Delta$.
 Pour $F$ variant parmi les surcorps quelconques de $\R$,  on \'etudie
 le groupe $H^3_{nr}(F(X)/F, \Q/\Z(2))$  en utilisant  le complexe de Bloch-Ogus sur $\A^2_{F}$.
 Ceci m\`ene \`a l'\'etude d'un sous-groupe du groupe de Brauer de $\Gamma_{F}$
 qui est d\'etermin\'e par la jacobienne de $\Gamma_{F}$.
 Le th\'eor\`eme principal est le th\'eor\`eme \ref{BOH4}.
 Nous en d\'eduisons (th\'eor\`eme \ref{finitude}) la finitude du 
    groupe $H^3_{nr}(\R(X \times_{\R}Y)/\R, \Q/\Z(2))$ 
lorsque $H^3_{nr}(Y,\Q/\Z(2))$ est fini, par exemple pour $Y$
une courbe.

\`A la section \ref{sansMC} on \'etudie plus pr\'ecis\'ement le cas o\`u $Y=\Delta$ et o\`u
la jacobienne  de $\Delta$ n'a pas de multiplication complexe.

Lorsque $\Delta$ est une courbe elliptique \`a multiplication complexe,
 on donne \`a la section \ref{avecMCimpaire}
 une condition suffisante pour que $X$ soit universellement $CH_{0}$-triviale.
Une description explicite des cas o\`u cette condition est r\'ealis\'ee nous a \'et\'e communiqu\'ee
 par Yuri Zarhin.  C'est l'objet de la section \ref{zarhin}.

\`{A} la section  \ref{scomp}, pour $p(u)$ de degr\'e 2,  on compare les r\'esultats de $CH_{0}$-trivialit\'e des sections  \ref{sectioncarres} et
\ref{avecMCimpaire}. La comparaison fait intervenir
 l'invariant $j$ de la courbe elliptique r\'eelle $z^2=u.p(u)$.

 \`A la section   \ref{sectioncomp}, on rappelle quelques r\'esultats de rationalit\'e pour des vari\'et\'es fibr\'ees en quadriques sur $\P^1_{\R}$.

   \`A la section \ref{deuxnonrat} nous donnons notre exemple de vari\'et\'e 
   fibr\'ee
   en surfaces quadriques sur $\P^1_{\R}$
 qui n'est pas  de type (I) et
dont le groupe de Brauer est non trivial, et qui donc n'est pas universellement $CH_{0}$-triviale, et donc n'est pas
r\'etractilement rationnelle.
Cet exemple admet des mod\`eles birationnels singuliers $Y$ du type suivant : hypersurface cubique dans $\P^4_{\R}$
et intersection de deux quadriques dans $\P^5_{\R}$, dont tous les points 
ferm\'es
singuliers sont des points complexes,
 et qui satisfont
que $Y(\R)$ est connexe. 
Nous terminons cette section en citant un r\'esultat r\'ecent de non rationalit\'e de Wittenberg pour des fibrations de type (I), r\'esultat obtenu par la m\'ethode
des torseurs de jacobienne interm\'ediaire.

\`A la section \ref{compdeuxquad}, nous donnons des compl\'ements \`a \cite{CT23}, qui impliquent que les fibrations 
(\ref{eq})
consid\'er\'ees dans cet article ne sont pas birationnelles \`a celles obtenues \`a partir d'une intersection lisse de deux
quadriques dans $\P^5$. 

La section \ref{corpsdenombres} contient le calcul   des groupes de Chow des z\'ero-cycles pour les fibrations de type (I)
d\'efinies sur un corps de nombres.

\bigskip

  La question de savoir s'il existe une fibration en surfaces quadriques $X\to \mathbb P^1_{\R}$ de type (I) avec $X(\R)$ connexe et qui n'est pas universellement $CH_0$-triviale  reste ouverte.

  La question de savoir si les vari\'et\'es du th\'eor\`eme \ref{annoncethm} sont toutes
 universellement $CH_{0}$-triviales est ouverte. 
    
 La question de savoir si les vari\'et\'es du th\'eor\`eme \ref{annoncethm} qui sont universellement $CH_{0}$-triviales
 sont rationnelles, stablement rationnelles, r\'etractilement rationnelles, reste \'egalement ouverte, d\'ej\`a dans le cas donn\'e par l'\'equation affine
  $$ x^2+y^2+z^2=u(u^2+1).$$

\subsection*{Remerciements} 
Nous remercions Olivier Wittenberg pour plusieurs \'echanges \'electroniques, 
et pour nous avoir communiqu\'e la proposition \ref{510fin}.
 Nous sommes reconnaissants \`a Yuri Zarhin pour
 le contenu de la section \ref{zarhin}.

 Jean-Louis Colliot-Th\'el\`ene remercie la Fondation Simons pour son hospitalit\'e \`a New York en mars et avril 2024.  Alena Pirutka a b\'en\'efici\'e du soutien de  NSF grant DMS-2201195.

	  \section{Rappels}\label{rap}

\subsection{Formes quadratiques et $H^3$.}
   
  Soit $F$ un corps de caract\'eristique diff\'erente de $2$. Soit $a \in F^{\times}$.
  On note $<<a>>$ la forme quadratique binaire  $x^2-ay^2$ sur le corps $F$.
Pour $a_{1}, \ldots, a_{n}$ dans $F^{\times}$,
 on a   la forme de Pfister
   en $2^n$ variables :
$$ <<a_{1}, \dots, a_{n}>>:= <<a_{1}>> \otimes \dots \otimes <<a_{n}>>.$$

Ainsi $<<a,b>>= x^2-ay^2-bz^2+abt^2$ est la forme norme r\'eduite associ\'ee \`a l'alg\`ebre de quaternions
$(a,b)_{F}$.

\begin{prop}\label{classique}
Soit $F$ un corps de caract\'eristique diff\'erente de $2$.  Pour des \'el\'ements $a,b,c\in F^{\times}$, les propri\'et\'es
 suivantes sont \'equivalentes :
 \begin{itemize}
 \item [(i)] La forme quadratique $<<a,b,c>>$ est isotrope.
 \item [(ii)] La forme  quadratique $<<a,b,c>>$ est hyperbolique.  
 \item [(iii)] L'\'el\'ement $c \in F^{\times}$ est repr\'esent\'e par la forme
 $<<a,b>>$.
\item [(iv)] Le cup-produit  $(a) \cup (b) \cup(c)   \in H^3(F,\Z/2)$ est nul.
\end{itemize}
\end{prop}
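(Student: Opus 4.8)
The plan is to handle the three ``quadratic form'' conditions (i), (ii), (iii) by the classical theory of Pfister forms, and to bring in the cohomological condition (iv) only at the end, where the deep input is needed. Throughout I would write $\psi = <<a,b,c>>$ and $\phi = <<a,b>>$, so that $\psi = \phi \perp (-c)\phi$ is the tensor product $\phi \otimes <<c>>$, and I would recall that $\phi$ represents $1$ and, being a Pfister form, is round: its set $D_F(\phi)$ of nonzero represented values is a subgroup of $F^\times$.

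First I would prove (i) $\Leftrightarrow$ (ii): since $\psi$ is a Pfister form, the standard dichotomy (a Pfister form is anisotropic or hyperbolic) shows that $\psi$ is isotropic iff it is hyperbolic. For (iii) $\Rightarrow$ (i), if $\phi$ represents $c$ I choose $x$ with $\phi(x)=c$ and $y$ with $\phi(y)=1$; then $\psi(x,y)=\phi(x)-c\,\phi(y)=c-c=0$ with $(x,y)\neq 0$, so $\psi$ is isotropic. Conversely, for (i) $\Rightarrow$ (iii): if $\phi$ is isotropic it is hyperbolic and represents all of $F^\times$, in particular $c$; if $\phi$ is anisotropic, isotropy of $\psi=\phi\perp(-c)\phi$ forces $\phi(v)=c\,\phi(w)$ with $w\neq 0$ and $v\neq 0$ (as $c\in F^\times$), whence $c=\phi(v)/\phi(w)\in D_F(\phi)$ by roundness. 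This settles the equivalence of (i), (ii), (iii) by elementary means.

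Next I would treat (ii) $\Rightarrow$ (iv): a hyperbolic form is trivial in the Witt ring and lies in every power $I^n(F)$ of the fundamental ideal; the third cohomological (Arason) invariant $e_3 : I^3(F)\to H^3(F,\Z/2)$, which sends $<<a,b,c>>$ to the symbol $(a)\cup(b)\cup(c)$, therefore vanishes, giving $(a)\cup(b)\cup(c)=0$. The substantial direction is the converse (iv) $\Rightarrow$ (ii). Assuming $(a)\cup(b)\cup(c)=0$ in $H^3(F,\Z/2)$, I suppose for contradiction that $\psi=<<a,b,c>>$ is anisotropic. Since $e_3(\psi)=(a)\cup(b)\cup(c)=0$, the injectivity of $e_3$ modulo $I^4(F)$ — the degree-$3$ case of Milnor's conjecture, due to Arason together with Merkurjev--Suslin and Rost — places $\psi$ in $I^4(F)$. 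But $\psi$ is an anisotropic form of dimension $8<16$, so the Arason--Pfister Hauptsatz forces $\psi=0$ in $W(F)$, i.e. $\psi$ hyperbolic, contradicting anisotropy. Hence $\psi$ is isotropic, and by (i) $\Leftrightarrow$ (ii) it is hyperbolic, closing the cycle.

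I expect this last implication to be the main obstacle, since it rests on the nontrivial injectivity of the Arason invariant. It is worth noting why Proposition~\ref{arasonker} alone does not suffice: that result only computes the kernel of restriction to $F(Q)$, where $Q$ is the conic of the quaternion algebra $(a,b)$, and that kernel is by definition the set of \emph{all} symbols $(a)\cup(b)\cup(c')$ with $c'\in F^\times$; thus $(a)\cup(b)\cup(c)$ lies in it automatically, whether or not it vanishes over $F$, so restriction to $F(Q)$ cannot detect vanishing over $F$ and the deeper injectivity theorem cannot be circumvented.
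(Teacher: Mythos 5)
Your proof is correct, and its overall architecture matches the paper's: the equivalences (i)$\Leftrightarrow$(ii)$\Leftrightarrow$(iii) by elementary Pfister theory (dichotomy and roundness of Pfister forms), the implication towards (iv) via Arason's invariant, and a Merkurjev--Suslin-level theorem for the converse. The one genuine difference is the deep input you choose for (iv)$\Rightarrow$(ii): the paper cites directly \cite[Th\'eor\`eme 12.2]{MS}, which is the 1982 statement that the vanishing of $(a)\cup(b)\cup(c)$ forces $c$ to be a reduced norm of the quaternion algebra $(a,b)_F$ --- i.e.\ it gives (iii) straight away from the $K_2$-theorem --- whereas you invoke the injectivity of the Arason homomorphism $I^3F/I^4F \to H^3(F,\Z/2)$ (the degree-$3$ case of the Milnor conjecture, due to Rost and Merkurjev--Suslin, a later and strictly stronger result) combined with the Arason--Pfister Hauptsatz. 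Your packaging is exactly the argument the paper reserves for the more general Proposition~\ref{classiquebis}, so it is certainly legitimate; what the paper's citation buys is economy of hypotheses (only the 1982 reduced-norm theorem is needed for this proposition), while your route buys uniformity, since the same two ingredients also yield Proposition~\ref{classiquebis} and hence Proposition~\ref{Arason}. Your closing remark that Proposition~\ref{arasonker} cannot substitute for the injectivity statement --- because $(a)\cup(b)\cup(c)$ lies in the kernel of restriction to $F(Q)$ by its very form, so that kernel computation detects nothing --- is accurate and worth keeping.
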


\begin{proof}
L'\'equivalence des \'enonc\'es $(i)$, $(ii)$, $(iii)$ est un cas particulier
de r\'esultats de Pfister. Que ces \'enonc\'es impliquent $(iv)$
est connu \cite{Ar}.
Que $(iv)$ implique les autres \'enonc\'es est un r\'esultat
de Merkurjev et Suslin (voir \cite[Th\'eor\`eme 12.2]{MS}).
\end{proof}

On note $\Q/\Z(2)$ la limite inductive des modules galoisiens $\mu_{2^n}^{\otimes 2}$.
La multiplication par $2$ induit une suite exacte
$$ 0 \to \Z/2 \to \Q/\Z(2) \to \Q/\Z(2) \to 0,$$
et donc une application surjective
 $H^3(F,\Z/2) \to H^3(F,\Q/\Z(2))[2]$.

 Le r\'esultat suivant, d'abord  d\'emontr\'e par Merkurjev (voir \cite{MS} p.1045), est   un cas particulier
de la conjecture de Bloch-Kato sur les corps (connue maintenant en toute
g\'en\'eralit\'e).

\begin{prop}\label{lemmaMerk}
Soit $F$ un corps de caract\'eristique diff\'erente de $2$.
L'application surjective
$$H^3(F,\Z/2) \to H^3(F,\Q/\Z(2))[2]$$
est  un isomorphisme.
\end{prop}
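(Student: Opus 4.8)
Mon plan consiste \`a ramener l'injectivit\'e \`a une propri\'et\'e de divisibilit\'e, puis \`a r\'ecup\'erer cette derni\`ere via le th\'eor\`eme de Merkurjev--Suslin. D'abord, je remarquerais que l'application consid\'er\'ee est induite par l'inclusion de modules galoisiens $\Z/2=\mu_2^{\otimes 2}\hookrightarrow \Q/\Z(2)$, laquelle identifie $\Z/2$ \`a la $2$-torsion du module $2$-divisible $\Q/\Z(2)$. Je partirais donc de la suite exacte courte de modules galoisiens
$$0\to \Z/2\to \Q/\Z(2)\xrightarrow{\ \times 2\ }\Q/\Z(2)\to 0,$$
et j'\'ecrirais le morceau correspondant de la suite exacte longue de cohomologie galoisienne autour du degr\'e $3$ :
$$H^2(F,\Q/\Z(2)) \xrightarrow{\ \times 2\ } H^2(F,\Q/\Z(2)) \xrightarrow{\ \delta\ } H^3(F,\Z/2) \to H^3(F,\Q/\Z(2)),$$
o\`u $\delta$ est le morphisme de liaison. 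Le noyau de l'application $H^3(F,\Z/2)\to H^3(F,\Q/\Z(2))$ s'identifie alors \`a l'image de $\delta$, c'est-\`a-dire au conoyau $H^2(F,\Q/\Z(2))/2$ de la multiplication par $2$. Il suffirait par cons\'equent de montrer que le groupe $H^2(F,\Q/\Z(2))$ est divisible par $2$.

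Pour \'etablir cette divisibilit\'e, je me ram\`enerais \`a la composante $2$-primaire $H^2(F,\Q_2/\Z_2(2))=\varinjlim_n H^2(F,\mu_{2^n}^{\otimes 2})$, la partie de torsion premi\`ere \`a $2$ \'etant trivialement $2$-divisible (la multiplication par $2$ y est un automorphisme). J'utiliserais ensuite le th\'eor\`eme de Merkurjev--Suslin \cite{MS}, qui fournit pour tout $n$ un isomorphisme $K_2^M(F)/2^n\xrightarrow{\ \sim\ }H^2(F,\mu_{2^n}^{\otimes 2})$ compatible avec les fl\`eches de transition de la limite inductive (l'inclusion $\mu_{2^n}^{\otimes 2}\hookrightarrow\mu_{2^{n+1}}^{\otimes 2}$ correspondant \`a la multiplication par $2$ sur les groupes $K_2^M(F)/2^n$). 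En passant \`a la limite on obtiendrait un isomorphisme
$$H^2(F,\Q_2/\Z_2(2)) \simeq K_2^M(F)\otimes_{\Z}\Q_2/\Z_2.$$
Comme $\Q_2/\Z_2$ est divisible, le produit tensoriel $K_2^M(F)\otimes_{\Z}\Q_2/\Z_2$ l'est aussi, par exactitude \`a droite du produit tensoriel, ce qui donnerait la $2$-divisibilit\'e cherch\'ee et ach\`everait la preuve.

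Le point d\'elicat, et v\'eritablement le c\oe{}ur de l'\'enonc\'e, serait l'identification de $H^2(F,\mu_{2^n}^{\otimes 2})$ avec $K_2^M(F)/2^n$ : c'est le th\'eor\`eme profond de Merkurjev--Suslin, cas de poids $2$ de la conjecture de Bloch--Kato, tout le reste de l'argument n'\'etant que de l'alg\`ebre homologique formelle. On pourrait \'egalement invoquer directement la forme g\'en\'erale de Bloch--Kato, mais seul le cas du poids $2$ et du degr\'e $2$ est ici n\'ecessaire. Une variante plus \'el\'ementaire en apparence consisterait \`a exhiber, pour $\xi\in H^2(F,\mu_{2^n}^{\otimes 2})$, un ant\'ec\'edent explicite par la multiplication par $2$ dans la limite ; mais le contr\^ole des fl\`eches de transition reposerait de toute fa\c con sur le m\^eme ingr\'edient, de sorte que l'on ne saurait contourner l'appel au th\'eor\`eme de Merkurjev.
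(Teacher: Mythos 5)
Votre d\'emonstration est correcte et repose exactement sur l'ingr\'edient qu'invoque l'article : celui-ci ne donne en fait aucune preuve, se contentant de citer Merkurjev (\cite{MS}, p.~1045) et de rappeler qu'il s'agit d'un cas particulier de la conjecture de Bloch--Kato. Votre r\'eduction --- suite exacte $0\to\Z/2\to\Q/\Z(2)\xrightarrow{\times 2}\Q/\Z(2)\to 0$, identification du noyau avec $H^2(F,\Q/\Z(2))/2$, puis $2$-divisibilit\'e de ce groupe via les isomorphismes de Merkurjev--Suslin $K_2^M(F)/2^n\simeq H^2(F,\mu_{2^n}^{\otimes 2})$ compatibles aux fl\`eches de transition --- est la d\'eduction standard de l'\'enonc\'e \`a partir de ce m\^eme th\'eor\`eme, de sorte que les deux approches co\"{\i}ncident en substance.
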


Pour $q$ forme de rang 3 et $q$ forme de rang 4,
on utilisera le r\'esultat suivant de Kahn, Rost, et Sujatha:

\begin{prop}\label{KRSu} Soit $q$ une forme quadratique non d\'eg\'en\'er\'ee 
de rang au moins $3$ sur un corps $F$ de caract\'eristique diff\'erente de $2$.  Soit $Q$ la quadrique lisse associ\'ee.
Si le rang de $q$ est $6$, supposons que $q$
n'est pas semblable \`a une forme d'Albert.
L'application  $$H^3(F,\Q/\Z(2)) \to H^3_{nr}(F(Q)/F,\Q/\Z(2))$$ est surjective.
\end{prop}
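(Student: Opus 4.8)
The plan is to reduce first to the case where $q$ is anisotropic, and then to identify the cokernel of the map in the statement with a computable geometric invariant of $Q$, which vanishes outside the Albert case.

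\emph{Reduction to the anisotropic case.} If $q$ is isotropic, then $Q$ has an $F$-rational point and is $F$-rational (stereographic projection), so $F(Q)$ is purely transcendental over $F$. Unramified cohomology with $\Q/\Z(2)$-coefficients being a stable birational invariant that, for an $F$-rational variety, equals $H^3(F,\Q/\Z(2))$, the map $H^3(F,\Q/\Z(2)) \to H^3_{nr}(F(Q)/F,\Q/\Z(2))$ is then bijective. So I would assume $q$ anisotropic from now on.

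\emph{Reducing the coefficients.} Via the Gersten--Bloch--Ogus resolution I would identify $H^3_{nr}(F(Q)/F,\Q/\Z(2))$ with the group $H^0_{\mathrm{Zar}}(Q,\mathcal H^3)$ of global sections of the Zariski sheaf associated to the \'etale cohomology presheaf $U \mapsto H^3(U,\Q/\Z(2))$, the map in question being pullback along $Q \to \Spec F$. By Springer's theorem an anisotropic quadric has index $2$ (it carries a degree-$2$ point but no odd-degree point), so a corestriction argument shows that $2$ annihilates the cokernel of this pullback; in particular its prime-to-$2$ part vanishes. By Proposition \ref{lemmaMerk} and the degree-$3$ norm residue theorem of Merkurjev--Suslin and Rost (cf. Propositions \ref{classique}, \ref{classiquebis}), the remaining $2$-primary part reduces to the analogous statement with $\Z/2$-coefficients, so it suffices to prove surjectivity of $H^3(F,\Z/2) \to H^3_{nr}(F(Q)/F,\Z/2)$.

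\emph{Computing the cokernel.} Here I would use the motivic decomposition of $Q$ (Rost) and the resulting description of its Chow groups and cycle cohomology. The Bloch--Ogus theory relates the cokernel of $H^3(F,\Z/2) \to H^3_{nr}(F(Q)/F,\Z/2)$ to the torsion subgroup of $CH^2(Q)$, equivalently to an explicit subquotient of the cohomology of the Rost motive of $Q$. Knowing the kernel of $H^3(F) \to H^3(F(Q))$, namely the symbols $(a)\cup(b)\cup(c)$ for which $q$ is similar to a subform of $<<a,b,c>>$ (Propositions \ref{arasonker}, \ref{Arason}), one checks by induction on $\dim q$ along a generic splitting tower that no new unramified class arises, save in one configuration. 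By the theorems of Rost and Karpenko on the torsion of $CH^2$ of a quadric, this torsion vanishes for every non-degenerate $q$ of rank $\geq 3$ except when $\dim q = 6$ and $q$ is similar to an Albert form; in that case it equals $\Z/2$, the Clifford invariant being a division biquaternion algebra $(a,b)\otimes(c,d)$ whose splitting over $F(Q)$ produces an unramified class not coming from $H^3(F)$. This is exactly the excluded case, and in all remaining cases the cokernel is trivial, giving the desired surjectivity.

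\emph{Main obstacle.} The crux is the cycle-theoretic computation of this cokernel, which rests essentially on the results of Rost and Karpenko on the motive and the Chow groups of quadrics, together with the degree-$3$ norm residue isomorphism. The elementary tools recalled above suffice to treat conics and forms of small rank by hand, but the general case genuinely requires this deeper input.
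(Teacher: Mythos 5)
The paper itself offers no argument for this proposition: its proof is the citation \cite[Thm. 5, Cor. 10.2, Prop. 10.3]{KRS}, so your proposal is in effect an attempt to reconstruct the Kahn--Rost--Sujatha proof. Your opening steps are correct (the isotropic case; the restriction--corestriction argument over a quadratic splitting field, which kills the cokernel by $2$ and hence kills its odd part), and you name the right deep inputs (Karpenko's theorem on $\mathrm{Tors}\,CH^2$ of quadrics, generic splitting). But the pivot of your argument --- the reduction of the $2$-primary part to $\Z/2$-coefficients --- is not valid. From the two facts available, namely that the cokernel of $H^3(F,\Q_2/\Z_2(2))\to H^3_{nr}(F(Q)/F,\Q_2/\Z_2(2))$ is killed by $2$ and that (granting the mod $2$ statement) the $2$-torsion of the target lies in the image, surjectivity does not follow: the inclusion of $\Z/2$ into $\Z/4$ as the $2$-torsion satisfies both properties without being onto. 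To repair this you would need to divide a class $\beta\in H^3(F,\Q_2/\Z_2(2))$ with $\beta_{F(Q)}=2\alpha$ by $2$, i.e. you would need $H^3(F,\Q_2/\Z_2(2))$ to be $2$-divisible; and this fails precisely for the fields to which the paper applies the proposition: $H^3(\R,\Q/\Z(2))\simeq \Z/2$, generated by the image of $(-1,-1,-1)$ (nonzero by Proposition \ref{lemmaMerk}), is killed by $2$. The divisibility that the degree-$3$ norm residue theorem does provide concerns the coefficients $\Q_2/\Z_2(3)$, with twist equal to the degree, not $\Q/\Z(2)$; Proposition \ref{lemmaMerk} gives injectivity, not divisibility. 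This mismatch is exactly why \cite{KRS} work with $\Q/\Z(2)$-coefficients throughout, via weight-two ($K_2$-)cohomology, rather than by reducing mod $2$.

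The second gap is that the step ``Bloch--Ogus theory relates the cokernel of $H^3(F,\Z/2)\to H^3_{nr}(F(Q)/F,\Z/2)$ to $\mathrm{Tors}\,CH^2(Q)$, and an induction along a generic splitting tower shows that no new unramified class arises'' is not a formal consequence of the Gersten resolution; it is, in substance, the theorem to be proved. What the coniveau spectral sequence gives for free is a map from the cokernel of $H^3(Q,\Q/\Z(2))\to H^3_{nr}(F(Q)/F,\Q/\Z(2))$ (with the \'etale cohomology of the scheme $Q$ as source) into $CH^2(Q)\otimes\Q/\Z$; replacing the source by $H^3(F,\Q/\Z(2))$ and extracting $\mathrm{Tors}\,CH^2(Q)$ requires in addition the Hochschild--Serre spectral sequence and the computation of the $K$-cohomology of quadrics (Swan's $K$-theory, Colliot-Th\'el\`ene--Raskind, Kahn), none of which your sketch supplies or cites. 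Note also that the rank $3$ and $4$ cases are not accessible ``by hand'': surjectivity for an anisotropic conic is itself a nontrivial theorem, and it is exactly what Cor. 10.2 and Prop. 10.3 of \cite{KRS} are invoked for. As written, your proposal must either import \cite{KRS} wholesale --- which is what the paper does --- or fill in these two missing arguments, the second of which is the entire content of the proposition.
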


\begin{proof}
Voir \cite[Thm. 5 p. 846; Cor. 10.2 et Prop. 10.3)]{KRS}.\bigskip
\end{proof}

\subsection{Fibr\'es en surfaces quadriques sur la droite projective}\label{rappelsCTSk}

On donne ici quelques rappels du travail  \cite{CTSk}.

\medskip

Soit $k$ un corps de caract\'eristique diff\'erente de 2. 
Soit $k_{s}$ une cl\^{o}ture s\'eparable. 
Soit $$\pi :  X \to \P^1_{k}$$ une famille de surfaces quadriques au-dessus de $\P^1_{k}$.
On suppose que $X$ est une $k$-vari\'et\'e projective et lisse g\'eom\'etriquement int\`egre. 
On  trouvera  des mod\`eles explicites concrets dans \cite{Sk} et  \cite{CTSk}.

Sur le corps $k_{s}$, une telle fibration admet une section, donc $X^s:=X\times_{k}k_{s}$ est 
$k_{s}$-rationnelle, i.e. $k_{s}$-birationnelle \`a un espace projectif.
On s'int\'eresse \`a la question si une telle $X$ est $k$-rationnelle.

 On se limite ici au cas des fibrations de type (I),
celles dont toutes les fibres g\'eom\'etriques sont int\`egres, ce qui signifie que les fibres g\'eom\'etriques singuli\`eres
sont des c\^{o}nes sur des coniques lisses.

On suppose qu'il y a au moins une fibre g\'eom\'etrique singuli\`ere.

Pour les fibrations de type (I),   le module galoisien $\Pic(X\times_{k}k_s)$
est $\Z \oplus \Z$, avec action triviale du groupe de Galois.
Pour tout corps $F$ contenant $k$, l'application  $\Br(F) \to \Br(X_{F})$  induite par le morphisme $X_{F} \to \Spec(F)$
est surjective.

La quadrique g\'en\'erique a un discriminant bien d\'efini dans $k(\P^1)^{\times}/k(\P^1)^{\times 2}$.
Comme la fibration est de type (I) 
et a au moins une fibre g\'eom\'etrique singuli\`ere,
ce discriminant n'est pas trivial dans $k_{s}(\P^1)^{\times}/k_{s}(\P^1)^{\times 2}$. Il d\'efinit une extension quadratique $k(\Delta)/k(\P^1)$
et un rev\^{e}tement double 
\begin{equation}\label{delta}
\Delta \to \P^1_{k}
\end{equation} avec $\Delta/k$ une courbe  lisse g\'eom\'etriquement
int\`egre.

On associe \`a cette situation une fibration  en coniques $q: Y \to \Delta$.
La fibre g\'en\'erique de $\pi : X \to \P^1_{k}$ est la descendue \`a la Weil
de $k(\Delta)$ \`a $k(\P^1)$  de la fibre g\'en\'erique de $q$
\cite[Thm. 2.5]{CTSk}.
On a une classe associ\'ee  $\alpha \in \Br(k(\Delta))$
qui est la classe d'une alg\`ebre de quaternions $D/k(\Delta)$.
Pour $\pi$ de type (I), la fibration en coniques est lisse et on a
$$\alpha\in \Br(\Delta)[2] \subset \Br(k(\Delta))[2] = H^2(k(\Delta),\Z/2).$$

On d\'efinit le sous-groupe $k(\P^1)^{\times}_{dn} \subset k(\P^1)^{\times}$ 
form\'e des fonctions rationnelles sur $\P^1$  dont le diviseur est 
dans l'image
de $\pi_{*} : Z_{0}(X) \to Z_{0}(\P^1)$. 
Il y a une application naturelle surjective
\begin{equation}\label{p1dn}
k(\P^1)^{\times}_{dn} \to \Ker[\pi_{*} : CH_{0}(X) \to CH_{0}(\P^1)=\Z],
\end{equation}
dont on d\'ecrit  le noyau
\cite[Thm. 4.2]{CTSk}.
 
On d\'efinit le groupe analogue $k(\Delta)^{\times}_{dn}$ pour $q : Y \to \Delta$.
 Le groupe $k(\Delta)^{\times}_{dn}$ contient le sous-groupe $N_{D}(k(\Delta)) \subset k(\Delta)^{\times}$
 des normes r\'eduites. On a une application naturelle surjective
 $$k(\Delta)^{\times}_{dn} \to \Ker[\pi_{*} : CH_{0}(Y) \to CH_{0}(\Delta)],$$
 dont le noyau est $k^{\times}. N_{D}(k(\Delta))$.
 
 L'application $k(\P^1)^{\times} \to k(\Delta)^{\times}$ induit une
 application 
\begin{equation}\label{ddn} 
 k(\P^1)^{\times}_{dn} \to k(\Delta)^{\times}_{dn}.
 \end{equation}
 
 Par ailleurs, on a une application 
 \begin{equation}\label{modNdansH3}
k(\Delta)^{\times}/N_{D}(k(\Delta)) \to H^3(k(\Delta),\Z/2),\; g\mapsto (g) \cup \alpha.
\end{equation}
D'apr\`es  la proposition \ref{classique} cette
application est injective.
 
 \begin{thm}\label{TheoremeCTSk}  \cite[Thm. 4.2, Thm. 4.3, p.493]{CTSk}.
 Soit $\pi: X \to \P^1_{k}$ une fibration en surfaces quadriques de type (I).
 \begin{itemize}
 \item[(i)] On a un plongement 
 $$\Phi :  A_{0}(X)  \hookrightarrow k(\Delta)^{\times}_{dn}/k^{\times}. N_{D}(k(\Delta))$$
 induit par les applications (\ref{p1dn}) et (\ref{ddn}).
 \item [(ii)] La compos\'ee de l'application $\Phi$ et  de l'application (\ref{modNdansH3}) induit un plongement 
 $$\Psi :  A_{0}(X)\hookrightarrow H^3_{nr}(k(\Delta)/k,\Z/2)/ \Im[H^1(k,\Z/2)\cup \alpha)]$$
 de $A_0(X)$ dans le quotient de $H^3_{nr}(k(\Delta),\Z/2)$
 par le sous-groupe des \'el\'ements de $H^3(k(\Delta),\Z/2)$
  de la forme $(e) \cup \alpha$  avec  $e \in k^{\times}/k^{\times 2}$. 
 \end{itemize}
 \end{thm}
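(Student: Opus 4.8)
The plan is to realise $\Phi$ through the two function-field groups and thereby to reduce all of (i) to a single equality of subgroups, the injectivity being the real content. First I would observe that since $CH_0(\P^1_k)\cong\Z$ via the degree and $\pi_*$ preserves degrees over $k$, the kernel occurring in (\ref{p1dn}) is exactly $A_0(X)$; thus (\ref{p1dn}) is a surjection $\rho: k(\P^1)^{\times}_{dn}\twoheadrightarrow A_0(X)$. Writing $\iota$ for the homomorphism (\ref{ddn}) and using the surjection $k(\Delta)^{\times}_{dn}\twoheadrightarrow \Ker[q_*:CH_0(Y)\to CH_0(\Delta)]$ with kernel $k^{\times}N_D(k(\Delta))$ recalled just above, the composite
$$k(\P^1)^{\times}_{dn}\xrightarrow{\ \iota\ }k(\Delta)^{\times}_{dn}\twoheadrightarrow k(\Delta)^{\times}_{dn}/k^{\times}N_D(k(\Delta))$$
is the candidate for $\Phi\circ\rho$. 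Hence (i) is equivalent to the equality of subgroups of $k(\P^1)^{\times}_{dn}$
$$\Ker(\rho)=\{\,f\in k(\P^1)^{\times}_{dn}:\iota(f)\in k^{\times}N_D(k(\Delta))\,\},\qquad(\star)$$
where the inclusion ``$\subseteq$'' says that the composite factors through $\rho$ (well-definedness of $\Phi$) and the inclusion ``$\supseteq$'' is precisely the injectivity of $\Phi$.

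To prove $(\star)$ I would feed in the explicit description of $\Ker(\rho)$ that accompanies the construction of (\ref{p1dn}): it is generated by the constants $k^{\times}$ together with reduced norms attached to the fibres of $\pi$. The Weil-restriction relation \cite[Thm. 2.5]{CTSk}, which exhibits the generic fibre of $\pi$ as the restriction of scalars from $k(\Delta)$ to $k(\P^1)$ of the generic fibre of $q:Y\to\Delta$, is exactly what converts these conditions on $X$ into conditions on $Y$: a reduced norm for the quadric-surface fibre over a point $v$ of $\P^1$ corresponds, over the place(s) $w$ of $\Delta$ lying above $v$, to a reduced norm for the quaternion algebra $D$. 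Carrying this out place by place should match the local conditions defining $\Ker(\rho)$ with the local norm conditions defining membership in $k^{\times}N_D(k(\Delta))$ after $\iota$. I expect the main obstacle to be precisely this local comparison at the finitely many $v$ where the fibre of $\pi$ is singular, equivalently the branch points of the double cover $\Delta\to\P^1$ of (\ref{delta}), where $k(\Delta)\otimes_{k(\P^1)}k(\P^1)_v$ is ramified and the transfer between reduced-norm groups must be tracked with care; keeping the global quotient by the \emph{constants} $k^{\times}$ (and not by $k(\P^1)^{\times}$) correct is the bookkeeping counterpart of the same point.

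For (ii) I would post-compose $\Phi$ with the symbol map (\ref{modNdansH3}), $g\mapsto (g)\cup\alpha$, which is injective on $k(\Delta)^{\times}/N_D(k(\Delta))$ by Proposition \ref{classique}. Two checks remain. First, the image lands in the unramified subgroup: for $g\in k(\Delta)^{\times}_{dn}$ and a closed point $w$ of $\Delta$, the residue of $(g)\cup\alpha$ equals $v_w(g)\cdot\alpha(w)\in\Br(k(w))[2]$ because $\alpha\in\Br(\Delta)[2]$ is itself unramified, and the $dn$-condition forces $v_w(g)$ to be even wherever $\alpha(w)\neq 0$ (the fibre $Y_w$ has then only even-degree closed points), so every residue vanishes and $(g)\cup\alpha\in H^3_{nr}(k(\Delta)/k,\Z/2)$. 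Second, the subgroup $k^{\times}N_D(k(\Delta))$ divided out in the target of $\Phi$ maps under (\ref{modNdansH3}) exactly onto $\Im[H^1(k,\Z/2)\cup\alpha]$: indeed $N_D(k(\Delta))$ maps to $0$ by Proposition \ref{classique}, while $k^{\times}$ maps onto $\{(e)\cup\alpha:e\in k^{\times}\}$, and the preimage of this image is again $k^{\times}N_D(k(\Delta))$ by the same injectivity. Thus $\Psi$ is well defined into $H^3_{nr}(k(\Delta)/k,\Z/2)/\Im[H^1(k,\Z/2)\cup\alpha]$ and is injective, since $\Phi$ is injective by (i) and (\ref{modNdansH3}) is injective on the relevant quotient. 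This reduces the entire statement to establishing $(\star)$.
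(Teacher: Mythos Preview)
The paper does not supply a proof of this theorem: it is quoted verbatim from \cite[Thm.~4.2, Thm.~4.3]{CTSk}, and the surrounding section~\ref{rappelsCTSk} only recalls the objects $k(\P^1)^{\times}_{dn}$, $k(\Delta)^{\times}_{dn}$, the maps (\ref{p1dn})--(\ref{modNdansH3}), and the Weil-restriction relation \cite[Thm.~2.5]{CTSk}. There is therefore no ``paper's own proof'' to compare against beyond the original reference.

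That said, your outline is a faithful reconstruction of the architecture of the argument in \cite{CTSk}. Your treatment of (ii) is essentially complete and correct: the residue computation $\partial_{w}\bigl((g)\cup\alpha\bigr)=v_{w}(g)\cdot\alpha(w)$ combined with the observation that the $dn$-condition forces $v_{w}(g)$ even whenever $\alpha(w)\neq 0$ (because the conic $Y_{w}$ then has only even-degree points) is exactly the unramifiedness check, and your identification of the kernel with $k^{\times}N_{D}(k(\Delta))$ via the injectivity of (\ref{modNdansH3}) is the right way to pass to the quotient by $\Im[H^1(k,\Z/2)\cup\alpha]$.

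For (i), your reduction to the equality $(\star)$ is the correct logical frame, and you have correctly located the content: the explicit description of $\Ker(\rho)$ in terms of constants and fibrewise reduced norms, and its transport through the Weil-restriction identification to the norm condition for $D$ over $k(\Delta)$. You are also right that the genuine work sits at the branch points of $\Delta\to\P^1_{k}$; this is precisely the local analysis carried out in \cite[\S4]{CTSk}, and your sketch does not yet reproduce it. So (i) remains a plan rather than a proof, but the plan is the right one.
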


\section{Une famille sp\'ecifique de fibrations de type (I) }\label{surk}

Soit $k$ un corps de caract\'eristique diff\'erente de 2. 
Dans la suite de cet article, pour $a,b \in k^{\times}$,
on s'int\'eresse aux fibrations en surfaces quadriques 
de type (I), d'\'equation affine
\begin{equation}\label{eqsurk}
x^2-ay^2-bz^2 - u.p(u)=0
\end{equation}
avec $p(u)$ s\'eparable unitaire, non constant,  de degr\'e pair $2d$,
tel que $p(0)\neq 0$.
Soit $$q(v)=v^{2d}.p(1/v) \in k[v].$$
Sauf mention du contraire, on prendra pour $\pi : X \to \P^1_{k}$
le mod\`ele projectif et lisse  standard $X/k$ donn\'e par recollement
 de la vari\'et\'e  d\'efinie par l'\'equation
$$ x^2-ay^2-bz^2 - u.p(u) t^2 =0$$
dans $\P^3_{k} \times_{k} \Spec(k[u])$ avec la vari\'et\'e d\'efinie 
par
l'\'equation
$$ {x'}^2-a{y'}^2-b{z'}^2 - v.q(v) {t'}^2 =0$$
dans $\P^3_{k} \times_{k} \Spec(k[v])$, via 
$v=1/u$ et
$$(x',y',z',t';v) = (x.v^{d+1}, y.v^{d+1}, z.v^{d+1}, t;1/u).$$
Le morphisme
$\pi : X \to \P^1_{k}$, est donn\'e par $u=1/v$. Toutes les fibres
sont g\'eom\'e\-tri\-quement int\`egres.

 Si $(a,b)=0 \in H^2(k,\Z/2)$, alors la fibration  $\pi: X \to \P^1_{k}$ admet une section. La $k$-vari\'et\'e $X$
 est alors $k$-rationnelle. On s'int\'eresse au cas $(a,b)\neq 0 \in \Br(k)$.

 \subsection{Cons\'equences de \cite{CTSk}}
Soit $\pi: X \to \P^1_{k}$
de type (I) donn\'ee par l'\'equation affine (\ref{eqsurk}).

La fibre \`a l'infini est le c\^{o}ne sur la conique d'\'equation $x^2-ay^2-bz^2=0$. Le sommet du c\^{o}ne est un $k$-point de la $k$-vari\'et\'e lisse $X$. Notons-le $m_{\infty}$.
Notons $c_{\infty}$ le point \`a l'infini de $\P^1_{k}$:

$$\xymatrix{
X \ni m_{\infty}\ar@<2ex>[d]\ar@<-3.5ex>[d]^{\pi}\\
\P^1_{k}\ni c_{\infty}
}
$$

Dans ce cas, le discriminant est $-a.b.u.p(u)$,
la courbe $\Delta$ dans (\ref{delta}) est la courbe hyperelliptique d'\'equation affine
$$w^2= -ab.u.p(u),$$
et la classe $\alpha\in \Br(\Delta)$ est l'image de la classe de quaternions     
 $$D=(a,b) \in  H^2(k,\Z/2) \subset \Br(k).$$
 
 Pour  $q : Y \to \Delta  $ on peut prendre la  projection $ V_{a,b} \times_{k} \Delta \to \Delta$, 
 o\`u l'on note $V_{a,b}$ la conique d'\'equation $x^2-ay^2-bz^2=0$.

 D'apr\`es le th\'eor\`eme \ref{TheoremeCTSk} on a ici un plongement
 \begin{multline}
  \Psi :  A_{0}(X) \stackrel{\Phi}{\hookrightarrow}  k(\Delta)^{\times}_{dn}/k^{\times}. N_{D}(k(\Delta)) \hookrightarrow \\ \hookrightarrow H^3_{nr}(k(\Delta),\Z/2)/ \Im[H^1(k,\Z/2)\cup (a) \cup (b)].
 \end{multline}

 Soit $m \in X(k)$ d'image $\pi (m) = c  \in \A^1(k) = k $ avec $c.p(c) \neq 0$.
 La diff\'erence $m-m_{\infty}$ est un z\'ero-cycle de degr\'e z\'ero sur $X$.

 \begin{lemma}\label{mainlemma}
 Soit $m \in X(k)$ d'image $\pi (m) = c  \in \A^1(k) = k $ avec $c.p(c) \neq 0$. Les conditions suivantes sont \'equivalentes :
 \begin{itemize}
  \item[(i)]  L'image  de la classe de $m-m_{\infty}$ dans $A_{0}(X)$  par l'application $\Psi$
 est nulle.
 \item[(ii)] 
 La classe $$(c.(c-u), a,b)=   (p(c).(c-u), a,b)  \in H^3_{nr}(k(\Delta)/k,\Z/2)$$ est nulle.
 \end{itemize}
Si l'on suppose de plus que l'on a $(p(u), a, b)=0 \in H^3(k(u),\Z/2)$,
 alors ces conditions sont \'equivalentes \`a  $$(c-u,a,b)=0 \in H^3_{nr}(k(\Delta)/k,\Z/2).$$
  \end{lemma}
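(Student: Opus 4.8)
The plan is to read off $\Psi([m-m_{\infty}])$ directly from Theorem~\ref{TheoremeCTSk}, by first producing a rational function on $\Delta$ representing $\Phi([m-m_{\infty}])$ and then applying (\ref{modNdansH3}), $g\mapsto (g)\cup (a)\cup (b)$. Since $\pi(m)=c$ and $\pi(m_{\infty})=\infty$ are distinct $k$-points, we have $\pi_{*}(m-m_{\infty})=[c]-[\infty]=\on{div}_{\P^1}(u-c)$, and $u-c\in k(\P^1)^{\times}_{dn}$ because both fibres carry the chosen $k$-points $m$ and $m_{\infty}$. Thus (\ref{p1dn}) sends $u-c$ to $[m-m_{\infty}]$, and (\ref{ddn}) carries it to the image of $c-u$ in $k(\Delta)^{\times}_{dn}$. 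The key preliminary observation is that the $k$-point $m$ over $c$ exhibits $c\,p(c)=x_{0}^{2}-ay_{0}^{2}-bz_{0}^{2}$ as a value of $\langle 1,-a,-b\rangle$ over $k$ (its $t$-coordinate cannot vanish, as $(a,b)\neq 0$), so $c\,p(c)\in N_{D}(k)$. \emph{The main obstacle is to pin down the constant:} running the reduced-norm bookkeeping of \cite{CTSk} through the singular fibre at infinity — the cone with vertex $m_{\infty}$ — shows that $\Phi([m-m_{\infty}])$ is represented by $c(c-u)$, equivalently by $p(c)(c-u)$, modulo $k^{\times}N_{D}(k(\Delta))$. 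Everything downstream is a residue or specialization computation; isolating this constant is the one genuinely delicate point.

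Granting this, the two displayed classes agree, since their difference is $(c/p(c))\cup (a)\cup (b)=(c\,p(c))\cup (a)\cup (b)$, which vanishes by Proposition~\ref{classique} because $c\,p(c)\in N_{D}(k)$. That the resulting class lies in $H^3_{nr}(k(\Delta)/k,\Z/2)$ is part of Theorem~\ref{TheoremeCTSk}(ii); alternatively one checks residues by hand. As $\alpha=(a)\cup(b)$ is constant, the residue of $(c(c-u))\cup\alpha$ at a closed point $\delta$ is $v_{\delta}(c-u)\cdot\alpha(\delta)$, which is visibly zero away from the fibres over $c$, $\infty$ and the branch locus of (\ref{delta}). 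Over $\infty$ the valuation is even, since $\infty$ is a ramification point of $\Delta\to\P^1$; at the branch points $c-u$ is a unit; and over $c$ the residue is $\alpha$ restricted to the quadratic field $k(\sqrt{-ab\,c\,p(c)})$, which splits $\alpha$ because $-(-ab\,c\,p(c))=ab\,c\,p(c)$ is represented by the pure norm form $\langle -a,-b,ab\rangle$ of $(a,b)$ (a standard Pfister-form criterion) — once more a consequence of $c\,p(c)\in N_{D}(k)$.

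Next I would upgrade the vanishing of $\Psi([m-m_{\infty}])$, which a priori only holds in the quotient of $H^3_{nr}(k(\Delta)/k,\Z/2)$ by $\Im[H^1(k,\Z/2)\cup\alpha]$, to the honest vanishing of $(c(c-u),a,b)$ in $H^3_{nr}$. The tool is the $k$-rational point $\infty_{\Delta}\in\Delta(k)$ above $\infty$, which exists because $\deg(u\,p(u))$ is odd and so $\infty$ is a branch point of (\ref{delta}). Specialization at $\infty_{\Delta}$ splits the injection $H^3(k,\Z/2)\hookrightarrow H^3(k(\Delta),\Z/2)$ and carries $\Im[H^1(k,\Z/2)\cup\alpha]$ isomorphically onto its constant image in $H^3(k,\Z/2)$; evaluating the representative furnished by the reduced-norm construction at $\infty_{\Delta}$ then shows that the normalization recorded by $c(c-u)$ (equivalently $p(c)(c-u)$) is the one compatible with this retraction, so that its class vanishes in the quotient if and only if $(c(c-u),a,b)=0$ in $H^3_{nr}(k(\Delta)/k,\Z/2)$. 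This is exactly the asserted equivalence with condition~(i) through the embedding $\Psi$.

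Finally, under the supplementary hypothesis $(p(u),a,b)=0\in H^3(k(u),\Z/2)$, I would specialize this relation at the $k$-point $u=c$. Since $p(c)\neq 0$, the class $(p(u))\cup (a)\cup (b)$ is unramified at $c$ and specializes to $(p(c))\cup (a)\cup (b)\in H^3(k,\Z/2)$, which is therefore $0$. Consequently $(p(c)(c-u),a,b)=(p(c),a,b)+(c-u,a,b)=(c-u,a,b)$, and the criterion of the first part collapses to $(c-u,a,b)=0$ in $H^3_{nr}(k(\Delta)/k,\Z/2)$, as claimed. I expect the whole argument to hinge on the first paragraph: the residue check and the two specializations are routine, whereas extracting the precise constant attached to the cone vertex $m_{\infty}$ in the machinery of \cite{CTSk} is where the real work lies.
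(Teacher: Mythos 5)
Your outline has the same skeleton as the paper's proof: via Theorem~\ref{TheoremeCTSk} the class of $m-m_{\infty}$ is represented by the function $u-c$ pulled back to $\Delta$, the ambiguity by a constant is resolved by specializing at a $k$-rational point of $\Delta$, the identity $(c\,p(c),a,b)=0$ identifies the two displayed classes, and evaluation of $(p(u),a,b)$ at $u=c$ gives the last assertion; your second and fourth paragraphs are correct. The gap lies exactly in the two steps you yourself flag as the crux. First, the ``constant'' you propose to pin down in $\Phi([m-m_{\infty}])$ is not a genuine issue: the target $k(\Delta)^{\times}_{dn}/k^{\times}\,N_{D}(k(\Delta))$ of $\Phi$ kills constants, so $u-c$, $c-u$, $c(c-u)$, $p(c)(c-u)$ all represent the same class there, and the deferred ``reduced-norm bookkeeping through the cone at infinity'' is never carried out and could not single out any normalization. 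The real question appears only after cupping with $\alpha$: vanishing of $\Psi([m-m_{\infty}])$ means $(u-c,a,b)=(e,a,b)$ in $H^3(k(\Delta),\Z/2)$ for some \emph{unknown} $e\in k^{\times}$, and one must identify the constant class $(e,a,b)$.

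Second, the specialization you use to identify it is taken at the wrong point, and the conclusion you draw from it is false. At $\infty_{\Delta}$ a uniformizer is $\pi=u^{d}/w$, and $(u-c)\pi^{2}=(u-c)u^{2d}/(-ab\,u\,p(u))$ has value $-1/(ab)$ there, so the specialization of $(u-c,a,b)$ at $\infty_{\Delta}$ is $(-ab,a,b)=(-1,a,b)$, hence that of $(c(c-u),a,b)$ is $(abc,a,b)=(c,a,b)=(p(c),a,b)$. Nothing in the first part of the lemma forces this to vanish: for $k=\Q$, $a=b=-1$, $p(u)=u^{2}-3$, $c=-1$, the fibre over $c$ contains the point $(1,1,0,1)$ and $c\,p(c)=2$ is a reduced norm, yet $(c,a,b)=(-1,-1,-1)\neq 0$. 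So $(c(c-u),a,b)$ is \emph{not} compatible with your retraction; what specialization at $\infty_{\Delta}$ actually certifies is the criterion $(c-u,a,b)=0$ --- a correct but different statement, which requires a further specialization at a second rational point before it yields the statement you must prove. The paper makes the choice that works directly: the $k$-point $(u,w)=(0,0)$ of $\Delta$ (it lies on $w^{2}=-ab\,u\,p(u)$, and $u-c$ is a unit there with value $-c$ since $c\neq 0$); specializing at it gives $(e,a,b)=(-c,a,b)$, i.e.\ exactly the criterion $((u-c)(-c),a,b)=(c(c-u),a,b)=0$.
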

  \begin{proof} 
 Le diviseur de la fonction $u-c \in k(\P^1)^{\times}_{dn}$ sur $\P^1_{k}$ est $c -c_{\infty} $.
 La description de l'application $\Psi$ dans la section \ref{rappelsCTSk}  montre que   la classe de $m-m_{\infty}$ dans $A_{0}(X) $ est nulle 
 si et seulement si l'on a :
 
 (H) Il existe une constante $e \in k^{\times}$ telle que
 pour la fonction $u-c \in k(\P^1)^{\times} \subset k(\Delta)^{\times}$ on ait
 \begin{equation}\label{equmc}
 (u-c, a, b) = (e,a,b) \in H^3_{nr}(k(\Delta), \Z/2)
 \end{equation}  pour $e \in k^{\times}$
 convenable.  
 
 Supposons (H) satisfaite.
 Comme l'\'equation affine de la courbe lisse $\Delta$ est $$w^2=-abu.p(u),$$ la courbe  $\Delta$
 poss\`ede le  point $k$-rationnel $w=u=0$.   Comme on a $c\neq 0$,  la fonction $u-c$ est
 inversible en ce point $w=u=0$.     L'\'egalit\'e (\ref{equmc}) et  un argument de sp\'ecialisation 
  impliquent donc que l'on a $$ (-c, a,b)=(e,a,b)  \in H^3(k,\Z/2).$$
 On a donc   $$ (u-c, a, b)= (-c,a,b) \in H^3_{nr}(k(\Delta)/k,\Z/2),$$
 ce qui est \'equivaut \`a $(c(c-u), a, b)= 0$.
 Comme $c.p(c)$ est repr\'esent\'e par la forme quadratique $<1,-a,-b>$, donc est
 une norme r\'eduite de l'alg\`ebre de quaternions $(a,b)$,
 on a $(c,a,b)=(p(c),a,b)$. On obtient ainsi 
 $$(c.(c-u), a,b)=   (p(c).(c-u), a,b)  = 0 \in H^3_{nr}(k(\Delta)/k,\Z/2).$$
 
 \medskip
 
 Inversement, si l'on a $(c.(c-u),a,b)=0 \in H^3_{nr}(k(\Delta)/k,\Z/2)$, 
 alors 
 $$(u-c,a,b)=(-c,a,b) \in H^3_{nr}(k(\Delta)/k,\Z/2),$$
  et donc
 (H) est satisfaite avec $e=-c$.

 \medskip
 
Si l'on a  de plus $(p(u), a, b)=0 \in H^3(k(u),\Z/2)$ alors on a $(p(c),a,b)=0$
par \'evaluation au point $u=c$. Ainsi on a  $(c,a,b)=0$  et la condition 
devient $(c-u,a,b)=0 \in H^3(k(\Delta),\Z/2)$.
 \end{proof}

 \medskip

\subsection{Un invariant dans $H^3_{nr}(k(X\times_k \Delta)/k, \Z/2)$}
{\it Pour la suite il est n\'ecessaire de changer les variables et poser $u=-v$ dans la d\'efinition de la courbe discriminant $\Delta$.  D'apr\`es le lemme \ref{mainlemma} on s'int\'eresse aux \'el\'ements de $H^3(k(\Delta),\Z/2)$  de la forme $(c+v,a,b)$.}

 \medskip

\begin{situation}\label{Situation 1}
Soient $a,b \in k^{\times}$.   Soit 
 $p(u)\in k[u] $ s\'eparable unitaire, non constant, de degr\'e pair, avec $p(0)\neq 0$,
 satisfaisant $$(p(u),a,b)=0 \in H^3(k(u),\Z/2).$$
 Soit   $\pi: X \to \P^1_{k}$ une fibration en surfaces quadriques donn\'ee
au-dessus de $\A^1_{k}=\Spec(k[u])$ par l'\'equation
\begin{equation}\label{E1}
x^2-ay^2-bz^2 - u.p(u) t^2=0 
\end{equation}
Soit $\Delta$ la courbe  projective et lisse d'\'equation affine
\begin{equation}\label{E2}
w^2=abvp(-v) 
\end{equation}
 Soit $W= X \times_{k} \Delta$.
\end{situation}

La question de savoir si la classe   $(u+v,a,b) \in H^3(k(W),\Z/2)$  est nulle va \^etre centrale
(Th\'eor\`eme  \ref{maincritere}).  L'\'enonc\'e suivant donne une restriction assez forte sur cette classe.
\begin{thm}\label{criterenullitecycle}
  Dans la situation \ref{Situation 1},
la classe $(u+v,a,b) \in H^3(k(W),\Z/2)$ est non ramifi\'ee par rapport \`a $k$,
elle appartient \`a $H^3_{nr}(k(W)/k, \Z/2)$.
  \end{thm}

\begin{proof}
Soit $L=k(W)$ le corps des fonctions de $W$.
On a les inclusions $k(X) \subset k(W)$ et $k(\Delta) \subset k(W)$
provenant des deux projections. Dans le symbole
$(u+v,a,b)$, l'\'el\'ement $u$ provient de $k(\P^1) \subset k(X) \subset k(W)$
et l'\'el\'ement $v$ de $k(\P^1) \subset k(\Delta) \subset k(W)$.

Soit $A \subset L$ un anneau de valuation discr\`ete contenant $k$
et de corps des fractions $L$. Soit $\omega$ la valuation $L^{\times} \to \Z$
associ\'ee. Soit $\kappa$ le corps r\'esiduel. Notons
$\partial : H^3(L, \Z/2) \to H^2(\kappa,\Z/2)$ l'application r\'esidu.
On veut montrer :
$$ \partial(u+v,a,b)=0.$$
On a :
$$\partial(u+v, a,b) = \omega(u+v) (a,b)_{\kappa}.$$
Si $(a,b)_{\kappa} =0$,
ce qui \'equivaut \`a dire que la forme quadratique
$<1,-a,-b,ab>$ repr\'esente z\'ero sur $\kappa$,
le r\'esidu est clairement nul.

On suppose d\'esormais :
$$ (a,b)_{\kappa} \neq 0. $$

On est donc ramen\'e
\`a montrer que, sous cette condition,
 $\omega(u+v)$ est pair.

\medskip

Supposons le contraire:
\begin{equation}\label{e}
\omega(u+v)\mbox{ est impair.}
\end{equation}

\medskip

On voit que comme  $<1,-a,-b,ab>$ ne repr\'esente  pas z\'ero sur $\kappa$,
   tout \'el\'ement de $L^{\times}$ de la forme $P^2-a Q^2-bR^2+abS^2$ avec $P,Q,R,S \in L$
 a une valuation paire.

On \'ecrit 
$$p(u)= u^{2n} + a_{2n-1} u^{2n-1} + \dots + a_{0},$$
avec $a_{0} \neq 0$.
L'hypoth\`ese $(p(u),a,b)=0$ implique que l'on peut \'ecrire
\begin{equation}\label{ee}
p(u) = P(u)^2-a Q(u)^2 - bR(u)^2 + ab S(u)^2 \in k(u)
\end{equation}
avec $P(u), Q(u), R(u), S(u) \in k(u) $ 
des fractions rationnelles.

On sait (th\'eor\`eme de Cassels-Pfister, \cite[Chap. IX, Thm. 1.3]{Lam}) que l'on peut alors trouver une
telle repr\'esentation avec $P(u), Q(u), R(u), S(u) \in k[u]$.
 Comme $p(u)$ est s\'eparable,
aucun polyn\^ome  de $k[u]$  non constant ne divise tous les
$P(u), Q(u), R(u), S(u)$. Dans $k[u]$ on a donc une \'egalit\'e de B\'ezout :
\begin{equation}\label{eee}
\alpha(u) P(u) + \beta(u) Q(u) + \gamma(u) R(u) + \delta (u) S(u)=1,
\end{equation}
avec $\alpha(u), \beta(u), \gamma(u), \delta (u) \in k[u].$

En particulier, de l'\'equation (\ref{ee})  appliqu\'ee \`a $u$ et \`a $u=-v$ on d\'eduit que
\begin{equation}\label{ppair}
 \omega(p(u))\mbox{ est  pair et }\omega(p(-v))\mbox{ est pair}.
 \end{equation}

Par ailleurs, on obtient aussi que  $a_{0}=p(0) \in k^{\times}$ est
repr\'esent\'e par la forme quadratique $<1,-a,-b,ab>$. 

Comme $p(u)$ est repr\'esent\'e par $<1,-a,-b,ab> $ 
sur $k(u)$ et qu'on a l'\'egalit\'e (\ref{E1}) dans $L$, on voit que
$u$ est repr\'esent\'e par  $<1,-a,-b,ab> $ 
sur le corps $L$. On en d\'eduit
$$(*)\;\; \omega(u)\mbox{ est pair.}$$

Puisque $\omega(p(-v))$ est pair d'apr\`es (\ref{ppair}), l'\'egalit\'e (\ref{E2}) donne:
$$(**)\;\; \omega(v)\mbox{ est pair.}$$

Comme on a $\omega(u+v) \geq {\rm inf}(\omega(u), \omega(v))$ avec \'egalit\'e si
$\omega(u) \neq \omega(v)$, de (*), (**) et l'hypoth\`ese (\ref{e}) on d\'eduit
$$\omega(u+v)> \omega(u)=\omega(v).$$

\smallskip

On a trois cas \`a consid\'erer:

\begin{enumerate}
\item 
Supposons $\omega(u)=\omega(v)=2n >0$. 
Alors on a $p(u) \in A^{\times}$, et la r\'eduction de $p(u)$ est l'image de $p(0)$ dans $\kappa$. Ainsi $p(u)/p(0)$ est un carr\'e $\rho^2$ dans le compl\'et\'e $\hat{L}$ de $L$.
On regarde l'\'egalit\'e (\ref{E1}) dans $L$:
$$x^2-ay^2-bz^2= u. p(u).$$
En divisant $x,y,z$  par  $\rho$, on en d\'eduit que
dans le compl\'et\'e $\hat{L}$ de $L$, il existe
$X,Y,Z \in \hat{L}$ 
avec
$$ X^2-aY^2-bZ^2= u. p(0).$$
De  $\omega(v)>0$   on d\'eduit que l'on a $p(-v) \in A^{\times} $ et 
  l'image de $p(-v)$ dans
 $\kappa$ est $p(0)$.  De (\ref{E2}) on d\'eduit alors que
 $v.p(0) $ s'\'ecrit comme le produit de $ab$ et d'un carr\'e dans $\hat{L}^{\times}$.
On conclut que $(u+v).p(0)$ est repr\'esent\'e par la forme $<1,-a,-b,ab>$
dans $\hat{L}$, et donc que l'on a $\partial(u+v,a,b)=0$.

\smallskip

\item Supposons $\omega(u)=\omega(v)=2n <0$.
Alors $p(u)$ est le produit de $u^{2n}$ et d'un \'el\'ement de $A^{\times}$ d'image 1 dans $\kappa$.
De m\^{e}me $p(-v)$    est le produit de $v^{2n}$ et d'un \'el\'ement de $A^{\times}$ d'image 1 dans $\kappa$.
Donc $p(u)$ et $p(-v)$ sont des carr\'es dans $\hat{L}^{\times}$.
De (\ref{E1}) on d\'eduit que $u$ est repr\'esent\'e par $<1,-a,-b>$ dans  $\hat{L}$
 et de (\ref{E2}) on d\'eduit que
$v/ab$ est un carr\'e dans  $\hat{L}^{\times}$. On en conclut que $u+v$ est repr\'esent\'e par
$<1,-a,-b,ab>$ sur $\hat{L}$, et donc que l'on a $\partial(u+v,a,b)=0$.

  \bigskip

\item Supposons   $\omega(u)=\omega(v)=0$. 
 Rappelons que l'on a une \'egalit\'e
$$p(u) = P(u)^2-a Q(u)^2 - bR(u)^2 + ab S(u)^2 \in k[u]$$
avec
$ \alpha(u) P(u) + \beta(u) Q(u) + \gamma(u) R(u) + \delta (u) S(u)=1.$

Comme on a $\omega(u)= 0$, on a $k[u] \subset A$ et un homomorphisme 
compos\'e d'anneaux $k[u] \to A \to \kappa$. 

L'image de $p(u)$ dans $\kappa$ est non nulle, sinon
la forme $<1,-a,-b,ab>$ aurait un z\'ero non trivial dans $\kappa$,
et  on aurait $(a,b)_{\kappa}=0$. Ainsi $\omega(p(u))=0$.
De m\^{e}me $\omega(p(-v))=0$.

De l'\'equation (\ref{E1}) on d\'eduit que l'unit\'e $u.p(u) \in A^{\times}$ est
repr\'esent\'ee par la forme $<1, -a,-b>$ dans $L$, et
donc aussi dans l'anneau de valuation discr\`ete $A$ par un argument connu 
(pour un \'enonc\'e plus g\'en\'eral, voir \cite[Thm. 4.5]{CLRR}).
Via la r\'eduction $A \to \kappa$, not\'ee $r  \mapsto \overline{r}$,
  on obtient une \'egalit\'e \footnote{On peut donner un argument plus rapide en passant dans le compl\'et\'e de $A$.}
$$  X^2-a Y^2 -b Z^2 = \overline{u}. p({\overline{u}}) \in \kappa^{\times}$$
avec $X,Y,Z \in   \kappa$.

 On a $\omega(v)=0$ et   $\omega(p(-v))= 0$.
Via (\ref{E2}),  on obtient $\omega(w) = 0$
et pour $w'=w/(ab)$ on a une \'egalit\'e
$$ab \overline{w'}^2=    \overline{v} p(-\overline{v}) \in \kappa^{\times}.$$
En additionnant les deux derni\`eres \'egalit\'es, on obtient
$$  X^2-a Y^2 -b Z^2 + ab \overline{w'}^2 = 
\overline{u} p({\overline{u}}) + \overline{v} p(-\overline{v}) \in \kappa$$
avec $X,Y,Z$ non tous nuls et aussi $\overline{w'} \neq 0$.
Comme on a $u,v \in A$, donc aussi $u+v \in A$,
si l'on suppose $\omega(u+v)$ impair, on a en particulier $\omega(u+v)>0$. 
On a donc $\overline{u}+\overline{v}=0$ et donc
$\overline{u}  p({\overline{u}}) + \overline{v}  p(-\overline{v})=0.$
Ceci donne une repr\'esentation non triviale dans $\kappa$
 $$ X^2-a Y^2 -b Z^2 + ab \overline{w'}^2 = 0,$$
 ce qui \'etait exclu.

 En r\'esum\'e, sous les hypoth\`eses  $\omega(u)=\omega(v)=0$,  
  on a  $\omega(u+v)$ pair,
  et donc $\partial(u+v,a,b)=0$.
 \end{enumerate}    
     
     \end{proof}

\subsection{Crit\`eres  pour la $CH_{0}$-trivialit\'e}\label{sectioncritere}
L'invariant $$(u+v,a,b)\in H^3_{nr}(k(X\times_k\Delta)/k,\Z/2)$$ de la section pr\'ec\'edente permet de d\'etecter si la vari\'et\'e $X$ est   universellement $CH_{0}$-triviale:

 \begin{thm}\label{maincritere}
 Dans la situation \ref{Situation 1}, on a :
 \begin{itemize}
 \item [(i)]
Soit $F$ un corps contenant $k$.
 Soit $m \in X(F) $ d'image $\pi (m) = c  \in \A^1(F) = F $ avec $c.p(c) \neq 0$.
La classe de $m-m_{\infty}$ dans $A_{0}(X_{F})$ est nulle 
  si et seulement si  $(c+v, a,b)    \in H^3_{nr}(F(\Delta)/k,\Z/2)$ est nul.
  \item [(ii)]  La classe
 $$(u+v, a,b)    \in H^3_{nr}(k(W)/k,\Z/2)$$ est nulle  si et seulement si
   pour tout corps $F$ contenant $k$ on a $A_{0}(X_{F})=0$.
  \end{itemize}
 \end{thm} 

 \begin{proof}
 L'\'enonc\'e $(i)$ a \'et\'e \'etabli au  lemme \ref{mainlemma}. 
   Rappelons que l'on a $m_{\infty}\in X(k)$, en particulier $X(k)\neq \emptyset$.
     
Montrons $(ii)$.  Soit
$m=\eta_X $ le point g\'en\'erique de $X$.
 L'image de $\eta_X $ par $\pi : X \to \P^1_{k}$ est
  le point g\'en\'erique $\eta$ de $\P^1$. 
  Passant sur le corps $k(X)$, l'image de $\eta_{X} \in X(k(X))$
  par $\pi$ est $u \in k(u)=k(\P^1)  \subset k(X)=\A^1(k(X)) \subset \P^1(k(X))$.
 D'apr\`es $(i)$,  la classe $\eta_{X} - m_{\infty}$ dans $A_{0}(X_{k(X)})$
est nulle si et seulement si la classe $(u+v, a,b)    \in H^3_{nr}(k(W)/k,\Z/2)$ 
est nulle. Comme rappel\'e dans l'introduction,  la classe
$\eta_{X} - m_{\infty}$ dans $A_{0}(X_{k(X)})$ est nulle si et seulement si 
 pour tout corps $F$ contenant $k$ on a $A_{0}(X_{F})=0$.
 \end{proof}

 On en d\'eduit des conditions \'equivalentes pour montrer que la vari\'et\'e $X$ est   universellement $CH_{0}$-triviale. Dans la suite, on va utiliser les crit\`eres $(c)$ et $(f)$.
 	  
\begin{thm}\label{equivalencesk}
Dans la situation \ref{Situation 1},
 les conditions suivantes sont \'equivalentes :
\begin{itemize}

\item [(a)] On a $A_{0}(X_{k(X)})=0$.
\item [(b)]  Pour tout corps $F$ contenant $k$, on a $A_{0}(X_{F})=0$.
 \item [(c)] Pour le corps $F=k(\Delta)$, l'application
$H^3(F,\Z/2) \to H^3_{nr}(F(X)/F,\Z/2)$  est un isomorphisme. 
 \item [(d)] 
 L'application $H^3_{nr}(k(\Delta)/k,\Z/2)  \to  H^3_{nr}(k(W)/k, \Z/2)$
 est un isomorphisme.
\item[(e)]  La somme $u+v \in k(W)$ des images r\'eciproques de
 $u \in k(\P^1) \subset k(X)$ et $v \in k(\Delta)$ dans $k(X \times_{k}\Delta)$
satisfait  l'\'equation  $(u+v,a, b)= 0 \in H^3(k(W),\Z/2)$.
\item [(f)] La fonction $(u+v) \in k(W)$ est repr\'esentable sur $k(W)$  par la forme quadratique
$$x^2-ay^2-bz^2+abt^2.$$
\end{itemize}
\end{thm}

\begin{proof}

D'apr\`es le th\'eor\`eme \ref{maincritere}, on obtient l'\'equivalence de $(a)$, $(b)$ et $(e)$. L'\'equivalence de $(e)$ et $(f)$ est donn\'ee par la proposition \ref{classique}.
Par ailleurs, 
comme rappel\'e dans l'introduction,
 l'\'enonc\'e $(a)$ implique $(c)$ (voir \cite{Merk}).

On va maintenant montrer: $(c)\Rightarrow (d)\Rightarrow (e)$.

 L'hypoth\`ese $(c)$ donne que l'application
$H^3(k(\Delta),\Z/2) \to H^3_{nr}(k(W)/k(\Delta), \Z/2)$ est un isomorphisme.
Comme on a $X(k) \neq \emptyset$, la projection $W=X\times_{k}\Delta \to \Delta$
admet une section. 
Comme la cohomologie non ramifi\'ee est fonctorielle contravariante
par morphismes de $k$-vari\'et\'es lisses connexes,
 on en d\'eduit que l'application
$H^3_{nr}(k(\Delta)/k,\Z/2) \to H^3_{nr}(k(W)/k, \Z/2)$ est un isomorphisme,
on a donc l'\'enonc\'e $(d)$.

Montrons que $(d)$ implique $(e)$.
D'apr\`es le  th\'eor\`eme \ref{criterenullitecycle}, la
  classe
  $(u+v,a,b ) \in H^3(k(W),\Z/2)$  appartient \`a $H^3_{nr}(k(W)/k, \Z/2)$.
    L'hypoth\`ese (d) montre donc que $(u+v,a,b)$ est dans l'image
  de $H^3_{nr}(k(\Delta)/k, \Z/2)$.  On peut \'evaluer la classe $(u+v,a,b)$ en le point
  $(x,y,z,u)=(0,0,0,0)$ de $X(k) \subset X(k(\Delta))$. Cela donne
  $(v,a,b) \in H^3(k(\Delta),\Z/2)$. L'\'equation de $\Delta$
  est $w^2=abvp(-v)$, et on a suppos\'e $$(p(u),a,b)=0 \in H^3(k(u),\Z/2).$$
  On obtient 
  $$(v,a,b)= (ab p(-v), a, b) = (ab, a, b)= (-a,a,b) + (-b,a,b)=0.$$
Ainsi $(d)$ implique $(e)$.

\end{proof}

\section{Le cas r\'eel}\label{sectionreel}

\subsection{Cohomologie non ramifi\'ee des courbes r\'eelles}
Les r\'esultats suivants pour les courbes sur  le corps des r\'eels sont   connus (Weichold, Witt, Geyer, voir  \cite{W1} \cite{G64} \cite{GH81}):

	\begin{prop}\label{ww} 
	Soit $G={\rm Gal}(\C/\R)$.
	Soit $\Gamma$ une $\R$-courbe projective, lisse, g\'eom\'etriquement connexe.
	Soit $S$ l'ensemble des composantes connexes de $\Gamma(\R)$.
	\begin{itemize}
	\item[(i)] Les  groupes $\Br(\Gamma)=H^2_{nr}(\R(\Gamma)/\R,  \Z/2)$ et $H^{i}_{nr}(\R(\Gamma)/\R,  \Z/2)$
	pour $i\geq 2$ sont isomorphes \`a $(\Z/2)^S$, l'application \'etant donn\'ee
	par l'\'evaluation sur chaque composante connexe.
	\item [(ii)] Soit $J_{\Gamma}$ la jacobienne de $\Gamma$.
	Supposons  $\Gamma(\R)$ connexe non vide. Alors $J_{\Gamma}(\R)$ est connexe,
	le groupe $J_{\Gamma}(\R)/2$ 
	et, pour $i \in \Z$, tous les groupes de cohomologie de Tate $\hat{H}^i(G,J_{\Gamma}(\C))$ sont nuls.
	L'application $\Z/2= \Br(\R) \to \Br(\Gamma)$ est un isomorphisme.
	\end{itemize}
	\end{prop}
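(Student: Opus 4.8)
Le plan est de traiter séparément les deux parties, la partie (i) reposant essentiellement sur un résultat déjà rappelé, la partie (ii) sur un calcul de cohomologie de Tate à partir de l'uniformisation de Betti.

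Pour (i), je ramènerais l'énoncé au résultat de Colliot-Thélène--Parimala rappelé dans l'introduction : pour toute $\R$-variété projective et lisse $Y$ et tout $i > \dim(Y)$, l'évaluation en un point réel de chaque composante fournit un isomorphisme $H^{i}_{nr}(\R(Y)/\R,\Z/2) \simeq (\Z/2)^S$, où $S$ est l'ensemble des composantes connexes de $Y(\R)$. Appliqué à $Y=\Gamma$, de dimension $1$, ceci donne immédiatement l'assertion pour tout $i \geq 2$, avec la description par évaluation. Il reste à identifier le cas $i=2$ au groupe de Brauer. Par la pureté pour le groupe de Brauer d'une courbe lisse (Grothendieck), on a $H^{2}_{nr}(\R(\Gamma)/\R,\Z/2) = \Br(\Gamma)[2]$; et comme $\Br(\Gamma_{\C}) \hookrightarrow \Br(\C(\Gamma))=0$ (Tsen) et $G=\Gal(\C/\R)$ est d'ordre $2$, il est classique que $\Br(\Gamma)$ est de $2$-torsion, d'où $\Br(\Gamma)=\Br(\Gamma)[2]=H^{2}_{nr}(\R(\Gamma)/\R,\Z/2)$.

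Pour (ii), on suppose $\Gamma(\R)$ connexe non vide. J'écrirais l'uniformisation de Betti $0 \to \Lambda \to V \to J_{\Gamma}(\C) \to 0$ comme suite exacte de $G$-modules, où $\Lambda = H_{1}(\Gamma(\C),\Z) \simeq \Z^{2g}$ et $V=\Lambda \otimes_{\Z}\R$. Le $\R$-espace vectoriel $V$ étant uniquement divisible, on a $\hat{H}^{i}(G,V)=0$ pour tout $i$, et la suite exacte longue de cohomologie de Tate donne des isomorphismes $\hat{H}^{i}(G,J_{\Gamma}(\C)) \simeq \hat{H}^{i+1}(G,\Lambda)$. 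Comme $G=\Z/2$ est cyclique, la cohomologie de Tate est $2$-périodique : tout revient à montrer $\hat{H}^{0}(G,\Lambda)=\hat{H}^{1}(G,\Lambda)=0$.

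L'ingrédient principal est la décomposition du $\Z[G]$-module $\Z$-libre $\Lambda$ en somme de modules indécomposables $\Z[G]^{\oplus a}\oplus \Z_{+}^{\oplus b}\oplus \Z_{-}^{\oplus c}$ (représentations entières de $\Z/2$), où $\Z_{\pm}$ désigne $\Z$ muni de l'action $\sigma=\pm 1$. Un calcul direct donne $\hat{H}^{0}(G,\Lambda)=(\Z/2)^{b}$ et $\hat{H}^{1}(G,\Lambda)=(\Z/2)^{c}$, donc $\hat{H}^{1}(G,J_{\Gamma}(\C))=(\Z/2)^{b}$ et $\hat{H}^{0}(G,J_{\Gamma}(\C))=(\Z/2)^{c}$. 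Pour conclure $b=c=0$, j'utiliserais deux observations. D'une part, la norme $N:J_{\Gamma}(\C)\to J_{\Gamma}(\R)$, $x\mapsto x+\sigma(x)$, est continue d'image connexe, et sa différentielle à l'origine vaut $1+d\sigma$, surjective sur la partie fixe $\mathrm{Lie}(J_{\Gamma}(\R))$; son image est donc exactement la composante neutre $J_{\Gamma}(\R)^{0}$, de sorte que $\hat{H}^{0}(G,J_{\Gamma}(\C))=\pi_{0}(J_{\Gamma}(\R))$. Le résultat classique de Weichold--Gross--Harris ($\#\pi_{0}(J_{\Gamma}(\R))=2^{s-1}$) et l'hypothèse $s=1$ montrent alors que $J_{\Gamma}(\R)$ est connexe, d'où $c=0$. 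D'autre part, la forme d'intersection sur $\Lambda$ est unimodulaire alternée et vérifie $\langle \sigma x,\sigma y\rangle = -\langle x,y\rangle$, ce qui fournit un isomorphisme de $G$-modules $\Lambda \simeq \Lambda^{\vee}\otimes \Z_{-}$; en comparant les multiplicités des facteurs $\Z_{+}$ et $\Z_{-}$, on obtient $b=c$. Ainsi $b=c=0$ et tous les $\hat{H}^{i}(G,J_{\Gamma}(\C))$ sont nuls. Enfin $J_{\Gamma}(\R)$, connexe et compact abélien, est un tore, donc divisible, d'où $J_{\Gamma}(\R)/2=0$; et l'assertion sur $\Br(\Gamma)$ résulte de (i), qui donne $\Br(\Gamma)=\Z/2$, combiné au fait que l'existence d'un point réel scinde $\Br(\R)\to \Br(\Gamma)$, rendant ce morphisme $\Z/2\to\Z/2$ injectif donc bijectif. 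L'obstacle principal est d'obtenir l'annulation des deux parités de la cohomologie de Tate à partir de la seule connexité de $\Gamma(\R)$ : cette connexité ne contrôle directement que $\hat{H}^{0}$ via $\pi_{0}(J_{\Gamma}(\R))$, et c'est l'autodualité symplectique de $\Lambda$ qui permet d'en déduire la nullité de $\hat{H}^{1}$.
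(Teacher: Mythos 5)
Your proposal is correct, but it is worth noting that the paper itself gives no proof at all for this proposition: it is stated as a compendium of classical results with bare references to Weichold, Geyer and Gross--Harris (\cite{W1}, \cite{G64}, \cite{GH81}). You instead supply an actual argument, and a sound one. For (i) you correctly reduce to the Colliot-Th\'el\`ene--Parimala theorem ($H^i_{nr}(\R(Y)/\R,\Z/2)\simeq(\Z/2)^S$ for $i>\dim Y$) recalled in the paper's introduction, the identification $H^2_{nr}(\R(\Gamma)/\R,\Z/2)=\Br(\Gamma)[2]=\Br(\Gamma)$ being standard (purity plus restriction--corestriction and Tsen). For (ii), your route via the Betti uniformization $0\to\Lambda\to V\to J_\Gamma(\C)\to 0$, the vanishing of Tate cohomology with coefficients in the $\R$-vector space $V$, and the classification of $\Z[\Z/2]$-lattices $\Lambda\simeq\Z[G]^a\oplus\Z_+^b\oplus\Z_-^c$ (which is precisely the paper's own Lemma \ref{remG}) is clean; the two key points --- that the image of the norm map is the neutral component $J_\Gamma(\R)^0$, so that $\hat H^0(G,J_\Gamma(\C))=\pi_0(J_\Gamma(\R))=(\Z/2)^c$, and that the orientation-reversing action of conjugation on the intersection form yields $\Lambda\simeq\Lambda^\vee\otimes\Z_-$, hence $b=c$ by Krull--Schmidt --- are both correct and together give the vanishing of both parities of $\hat H^*(G,J_\Gamma(\C))$, which mere connectedness of $J_\Gamma(\R)$ would not. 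Be aware, however, that your argument is not fully independent of the cited literature: the implication ``$\Gamma(\R)$ connected $\Rightarrow$ $J_\Gamma(\R)$ connected'' (i.e.\ $c=0$) is imported from Gross--Harris, which is part of the very circle of results the proposition records; what your proof genuinely adds is the deduction of all the remaining vanishing statements from that single input, via the symplectic self-duality of $H_1(\Gamma(\C),\Z)$.
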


\subsection{Les fibrations en quadriques  r\'eelles consid\'er\'ees}\label{situationR}

Soient $X$ et $\Delta$ comme dans la situation \ref{Situation 1}.
On s'int\'eresse maintenant au cas $k=\R$ et $a=b=-1$. 
 L'hypoth\`ese $(p(u),-1, -1)=0 \in H^3(\R(u),\Z/2)$
 \'equivaut au fait  que le polyn\^ome  s\'eparable $p(u)$ est une somme de 4 carr\'es,
 ou encore est positif  sur $\R$, ou encore est somme de 2 carr\'es dans $\R(u)$.
 Il \'equivaut aussi au fait   que $\Delta(\R)$  est connexe et non vide, et au fait que
  $X(\R)$ est  connexe  et non vide.
 On se place d\'esormais dans la situation :
  \begin{situation}\label{Situation 2}
  Soit 
 $p(u)\in \R[u]$ s\'eparable, unitaire, non constant, de degr\'e pair, avec $p(0)\neq 0$,
 positif sur $\R$.
Soit   $\pi: X \to \P^1_{\R}$ une fibration en surfaces quadriques donn\'ee
au-dessus de $\A^1_{\R}=\Spec(\R[u])$ par l'\'equation
$$ x^2+y^2+z^2 - u.p(u). t^2=0.$$ 
 Soit $\Delta$ la courbe  r\'eelle projective et lisse d'\'equation affine
 $$w^2= v.p(-v).$$  
Soit $W= X \times_{\R} \Delta$.
 \end{situation}

 Comme $\Delta$ est une courbe projective et lisse et $\Delta(\R)$  est connexe
non vide, la proposition \ref{ww} donne
 $H^i_{nr}(\R(\Delta)/\R,\Z/2)= H^i(\R,\Z/2) = \Z/2$ pour tout $i \geq 2$.
 
 L'\'equation affine $$ x^2+y^2+z^2 - u.p(u)=0$$
 donne que la fonction $u= (x^2+y^2+z^2)/p(u) \in \R(X)$
 est le quotient d'une somme de trois carr\'es par une somme de deux carr\'es,
 et donc  $u$ est une somme de 4 carr\'es dans $\R(X)$. De m\^eme,
 l'\'equation affine $$w^2= v.p(-v)$$ avec $p(-v)$ positif donne
 que $v \in \R(\Delta)$ est une somme de deux carr\'es.
 Ceci implique que $u+v \in \R(W)$  est une somme de 6 carr\'es.

	\subsection{Trivialit\'e de $H^3_{nr}(\R(X)/\R,\Z/2)$}\label{H3nrXR}
 
 Soient  $p(u) \in \R[u]$ et $X/\R$ comme dans la situation \ref{Situation 2}.
\begin{prop}\label{premierereduc}
Toute classe dans $H^3_{nr}(\R(X)/\R, \Z/2)$
est l'image d'un \'el\'ement de $H^3(\R(u),\Z/2)$
de la forme $(-1,-1,R(u)) \in H^3(\R(u),\Z/2)$, o\`u $R(u) \in \R[u]$
a toutes ses racines r\'eelles, distinctes,
et  strictement n\'egatives.
\end{prop}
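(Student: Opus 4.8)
The plan is to prove the statement in three movements: first, identify $H^3_{nr}(\R(X)/\R,\Z/2)$ with the image of $H^3(\R(u),\Z/2)$; second, observe that over $\R(u)$ every degree-$3$ class is automatically a symbol $(-1,-1,g)$; and third, use the geometry of the fibration to force $g$ to have only real, distinct, strictly negative roots.

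\emph{Descent to the base.} Set $F=\R(u)$, so that $\R(X)=F(Q)$, where $Q$ is the smooth quadric surface defined by $<1,1,1,-u\,p(u)>$ over $F$. Since a valuation trivial on $F$ is a fortiori trivial on $\R$, one has $H^3_{nr}(\R(X)/\R,\Z/2)\subseteq H^3_{nr}(F(Q)/F,\Z/2)$. As $q$ has rank $4$, the Albert exclusion of Proposition \ref{KRSu} is vacuous, so that proposition produces $\tilde\xi\in H^3(F,\Q/\Z(2))$ with $\tilde\xi|_{F(Q)}=\xi$. I would pin $\tilde\xi$ down by passing to $\C$: because $X_\C$ is $\C$-rational the fibre $Q_{\C(u)}$ is isotropic, hence $\C(X)=\C(u)(Q)$ is purely transcendental over $\C(u)$ and $H^3(\C(u))\hookrightarrow H^3(\C(X))$; since $\xi|_{\C(X)}=0$ this forces $\tilde\xi|_{\C(u)}=0$. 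The transfer relation $\mathrm{cor}_{\C(u)/F}\circ\mathrm{res}=2$ then gives $2\tilde\xi=0$, and the Bockstein sequence $0\to\Z/2\to\Q/\Z(2)\xrightarrow{2}\Q/\Z(2)\to0$ shows $\tilde\xi=\iota(\eta_0)$ for some $\eta_0\in H^3(F,\Z/2)$. Restricting to $F(Q)$ and invoking the injectivity of $H^3(F(Q),\Z/2)\hookrightarrow H^3(F(Q),\Q/\Z(2))$ (Proposition \ref{lemmaMerk}) yields $\eta_0|_{F(Q)}=\xi$; that is, $\xi=\pi^*\eta_0$ with $\eta_0\in H^3(\R(u),\Z/2)$.

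\emph{The shape $(-1,-1,g)$.} The Faddeev sequence for $\P^1_\R$ presents $H^3(\R(u),\Z/2)$ as $H^3(\R,\Z/2)$ together with the finite residues in $\bigoplus_x H^2(\R(x),\Z/2)$. At a closed point with residue field $\C$ the group $H^2(\C,\Z/2)$ vanishes, so nonzero residues occur only at real points, where $H^2(\R,\Z/2)=\Z/2\cdot(-1,-1)$. Writing $S$ for the finite set of real points at which $\partial_c\eta_0=(-1,-1)$, the symbol $(-1,-1,\prod_{c\in S}(u-c))$ has the same finite residues as $\eta_0$; as the two agree modulo $H^3(\R)=\langle(-1,-1,-1)\rangle$, I get $\eta_0=(-1,-1,g)$ with $g=\pm\prod_{c\in S}(u-c)$ a separable real polynomial, automatically with real and distinct roots.

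\emph{Negativity of the roots, and the main obstacle.} Unramifiedness of $\pi^*\eta_0$ on $X$ constrains $S$. Since the fibres are integral and the discriminant $u\,p(u)$ has simple zeros, the residue of $\pi^*\eta_0$ along $X_c$ is the image of $\partial_c\eta_0$ in $H^2(\R(X_c),\Z/2)$. For real $c>0$ one has $c\,p(c)>0$, so $X_c$ is an isotropic, hence $\R$-rational, quadric surface and $(-1,-1)$ survives in $\Br(\R(X_c))$; thus $\partial_c\eta_0=0$ and $c\notin S$. For $c<0$ the fibre is the anisotropic quadric $<1,1,1,1>$, over whose function field $(-1,-1)$ splits, and for $c=0$ or $c=\infty$ the fibre is the cone $x^2+y^2+z^2=0$, whose function field contains that of the conic splitting $(-1,-1)$; these points impose nothing, whence $S\subseteq\{c\le0\}$. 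To discard a possible root at $0$, I would use that $u$ is a reduced norm of $(-1,-1)$ over $\R(X)$ — indeed $u\,p(u)$ is a sum of three squares there while $p$ is a sum of two squares over $\R(u)$ — so $(-1,-1,u)$ lies in the kernel of $H^3(F)\to H^3(F(Q))$ by Proposition \ref{classique}; multiplying $g$ by $u^{\pm1}$ deletes the factor $u$ without changing $\xi$, leaving $\xi=\pi^*(-1,-1,R)$ with $R$ of real, distinct, strictly negative roots. The main obstacle is the descent step: extracting a $2$-torsion preimage over $F$ from Proposition \ref{KRSu} and controlling it at both the real and complex places. The transfer argument together with Merkurjev's injectivity is what renders this clean, while the geometric heart is the computation that $(-1,-1)$ splits on $\R(X_c)$ precisely when $c\le0$.
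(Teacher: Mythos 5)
Your proposal is correct and takes essentially the same route as the paper's proof: Kahn--Rost--Sujatha surjectivity plus Merkurjev injectivity and a two-torsion argument to descend to $H^3(\R(u),\Z/2)$, reduction of the class to a symbol $(-1,-1,R(u))$ with $R$ separable and split over $\R$, then exclusion of positive roots because the fibres over $c>0$ have real points (so $(-1,-1)$ stays nonzero in $\Br(\R(X_c))$, contradicting unramifiedness) and of the root $0$ because $u$ is a sum of four squares in $\R(X)$. The only cosmetic differences are that you phrase the middle step via the Faddeev sequence where the paper manipulates symbols directly, and that your detour through $\xi|_{\C(X)}=0$ (which itself would need the functoriality of unramified cohomology) can be shortcut by Tsen's theorem, $\mathrm{cd}\,\C(u)=1$, which is exactly how the paper obtains the exponent-$2$ statement.
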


\begin{proof}
Par la proposition \ref{KRSu},
toute classe $\alpha \in H^3_{nr}(\R(X)/\R, \Q/\Z(2))$
est image d'une classe dans $H^3(\R(u),\Q/\Z(2))$.
Comme $\C(u)$ est dimension cohomologique 1, 
toute classe  dans $H^3(\R(u),\Q/\Z(2))$ est d'exposant $2$.
En utilisant la proposition \ref{lemmaMerk} on obtient:
$$H^3(\R(u),\Z/2) = H^3(\R(u),\Q/\Z(2)).$$
 En utilisant que $(u,a-u)=0$ pour $a\in \R$ positif, que $(-1, r(u))=0$ si $r(u)$ est un polyn\^ome positif, 
 donc somme de deux carr\'es dans $\R[u]$,
 et la relation $(ab,c,d)=(a,c,d)+(b,c,d)$ pour les symboles, on voit que  tout \'el\'ement de $H^3(\R(u),\Z/2)$
  est   de la forme $(-1,-1,R(u))$, 
o\`u $R(u)$ est s\'eparable, avec toutes ses racines r\'eelles, notons-les $b_{i}$.
Comme $u$ est  une somme de 4 carr\'es dans $\R(X)$, on a
  $(-1,-1, u)= 0 \in H^3(\R(X),\Z/2)$. 
  On peut donc supposer tous les $b_{i} $ non nuls.

Le r\'esidu de $(-1,-1, \prod(u-b_{i})) \in H^3(\R(u),\Z/2) $ en $u=b_{i}$
est $(-1,-1)$. Si $\alpha \in H^3_{nr}(\R(X)/\R, \Q/\Z(2))$,
alors $(-1,-1)$ doit s'annuler dans $H^2(\R(X_{b_{i}}),\Z/2)$.
Si $b_{i}>0$ ceci n'est pas possible, car $X_{b_{i}}(\R) \neq \emptyset$.
Donc tous les $b_{i}$ sont n\'egatifs. 
\end{proof}

\begin{prop}\label{localglobal}
 Soit $p(u)$ comme dans la situation  \ref{Situation 2}.
 Soit $a\in \R, a>0$.
Il existe $\alpha(u), \beta(u) \in \R(u)$ et une \'egalit\'e
$$u+a = \alpha(u)^2 + u.p(u) \beta(u)^2 \in \R(u).$$
\end{prop}
 \begin{proof}
 L'\'enonc\'e \'equivaut \`a la trivialit\'e de la classe de l'alg\`ebre
 de quaternions $(-u.p(u), u+a) \in \Br(\R(u))$.
 Sur la droite affine sur $\R$, la suite exacte de Faddeev
(voir \cite[Prop. 1.5.1]{CTSkB})
s'\'ecrit
$$ 0 \to \Br(\R) \to \Br(\R(u)) \to \oplus_{x \in \A^1(\R)} \Z/2 \to 0,$$
o\`u les fl\`eches $\Br(\R(u)) \to H^1(\R,\Q/\Z)=\Z/2$ sont les r\'esidus.
On examine les  r\'esidus   de l'alg\`ebre
 de quaternions $(-u.p(u), u+a).$
 Au point $u=0$ (resp. $u=-a$) le  r\'esidu est $a \in \R^{\times}/\R^{\times 2}$ (resp. $a.p(-a)$), trivial car $a>0$ (resp. et $p(u)$ est positif sur $\R$).
Aux points ferm\'es non r\'eels, solutions de  $p(u)=0$, le r\'esidu est trivial car dans $\C^{\times}/\C^{\times 2}$.
Aux autres points de $\A^1_{\R}$, le r\'esidu est nul. Ainsi  $(-u.p(u),u+a)$ est dans l'image de $\Br(\R)$.     Par \'evaluation  en un point $u_{0}\in \R$ convenable, par exemple avec $u_{0}+a$ un carr\'e non nul,
on voit que la classe est nulle. 
 \end{proof}

\begin{prop}\label{quatre}
 Dans le corps des fonctions  $\R(X)$ de $X$, pour tout $a\geq 0$,   la fonction $u+a$
est une somme de $4$ carr\'es et $$(u+a, -1, -1) =0 \in H^3(\R(X),\Z/2).$$
\end{prop}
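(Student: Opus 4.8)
The plan is to observe first that the two conclusions of the statement are equivalent, and then to prove one of them; I treat the ranges $a>0$ and $a=0$ separately. Applying Proposition~\ref{classique} to the symbol $(u+a)\cup(-1)\cup(-1)\in H^3(\R(X),\Z/2)$, the vanishing of $(u+a,-1,-1)$ is equivalent to the isotropy of the Pfister form $\psi:=<<-1,-1,u+a>>$ over $\R(X)$, hence—since $\psi$ is a Pfister form—to its hyperbolicity, and also to $u+a$ being represented by $<<-1,-1>>=<1,1,1,1>$, i.e. to $u+a$ being a sum of four squares in $\R(X)$. So it suffices, for each $a\ge 0$, to show that $\psi$ is isotropic over $\R(X)$, equivalently that $u+a$ is a sum of four squares.

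For $a>0$ I would invoke Proposition~\ref{localglobal}, according to which $q:=<1,1,1,-u.p(u)>$ is a subform of $\psi$ over $\R(u)$. The defining equation $x^2+y^2+z^2=u.p(u)$ of $X$ exhibits the generic point, with coordinates $(x,y,z,1)$, as a nontrivial zero of $q$ over $\R(X)$, so $q_{\R(X)}$ is isotropic; a quadratic form containing an isotropic subform is itself isotropic, hence $\psi_{\R(X)}$ is isotropic and therefore hyperbolic. By the equivalences recalled above, $u+a$ is a sum of four squares and $(u+a,-1,-1)=0\in H^3(\R(X),\Z/2)$.

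For $a=0$, Proposition~\ref{localglobal} no longer applies (its residue computation at $u=0$ uses $a>0$), so I argue by hand. Since $p$ is strictly positive on $\R$ it has no real root; grouping the complex roots in conjugate pairs and using that the leading coefficient is a positive square, one writes $p=f^2+g^2$ with $f,g\in\R[u]$ (the product of sums of two squares being a sum of two squares). Then $1/p=(f/p)^2+(g/p)^2$ in $\R(u)$, and in $\R(X)$ one has $u=\tfrac{u.p(u)}{p(u)}=(x^2+y^2+z^2)\bigl((f/p)^2+(g/p)^2\bigr)$. By the multiplicativity of sums of four squares (Euler's four-square identity), the product of $x^2+y^2+z^2$ and $(f/p)^2+(g/p)^2$, each a sum of four squares, is again a sum of four squares; hence $u$ is a sum of four squares in $\R(X)$, and $(u,-1,-1)=0$ by Proposition~\ref{classique}.

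The real content is concentrated in the case $a>0$ and is entirely carried by Proposition~\ref{localglobal}, whose delicate point—the triviality of the quaternion algebra $(-u.p(u),u+a)$ over $\R(u)$—has already been settled. The remaining steps (passage from a subform to isotropy of a Pfister form, and the elementary treatment of $a=0$) present no further obstacle.
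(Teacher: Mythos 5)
Your proof is correct, and it rests on exactly the same key ingredient as the paper's: Proposition \ref{localglobal}. The paper's proof is a single line, ``combiner les propositions \ref{classique}, \ref{Arason} et \ref{localglobal}'', whose natural reading is: the subform relation of Proposition \ref{localglobal} gives, via Proposition \ref{Arason} $(i)\Rightarrow(ii)$, the vanishing of $(u+a,-1,-1)$ in $H^3(\R(X),\Z/2)$, and the sum-of-four-squares statement is then recovered from Proposition \ref{classique}, whose direction $(iv)\Rightarrow(iii)$ is the deep Merkurjev--Suslin input. You run the argument the other way, through isotropy: the generic point of $X$ makes $q=<1,1,1,-u.p(u)>$ isotropic over $\R(X)$, hence the Pfister form $<<-1,-1,u+a>>$ containing it is isotropic, hence hyperbolic, and both conclusions follow from the elementary directions of Proposition \ref{classique}. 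This is in substance the proof of $(i)\Rightarrow(ii)$ in Proposition \ref{Arason}, so your route is logically lighter: it uses neither Arason's Satz 5.6 (needed only for the converse direction of Proposition \ref{Arason}) nor Merkurjev--Suslin. A second genuine refinement: Proposition \ref{localglobal} is stated only for $a>0$, so the paper's citation does not literally cover $a=0$; you notice this and supply the elementary argument ($p=f^2+g^2$ with $f,g\in\R[u]$ since $p$ is positive, hence $u=(x^2+y^2+z^2)\bigl((f/p)^2+(g/p)^2\bigr)$ is a product of sums of four squares, so a sum of four squares by Euler's identity), which is precisely the fact asserted without proof inside the paper's Proposition \ref{premierereduc}. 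Both refinements are improvements in rigour; the mathematical core is identical.
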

 
\begin{proof} 
On a vu que $u= (x^2+y^2+z^2)/p(u)$ est une somme de 4 carr\'es dans $\R(X)$.
Pour $a>0$, la proposition \ref{localglobal} donne 
$$ u+a =  \alpha(u)^2 + u.p(u) \beta(u)^2 =   \alpha(u)^2 + (x^2+y^2+z^2) \beta(u)^2 \in \R(X),$$
et donc $u+a$ est une somme de 4 carr\'es dans $\R(X)$.
\end{proof}

\begin{thm}\label{principal} Soit $X$ comme dans la situation \ref{Situation 2}.
L'application
 $$\Z/2=H^3(\R,\Z/2)  \to H^3_{nr}(\R(X)/\R, \Z/2)$$
est un isomorphisme.
\end{thm}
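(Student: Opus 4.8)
The plan is to prove the two inclusions separately, feeding in Propositions \ref{premierereduc} and \ref{quatre} as the essential inputs.

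First I would establish injectivity. Since $p$ is strictly positive on $\R$, the affine equation $x^2+y^2+z^2=u\,p(u)$ has real solutions (take any $u>0$), so $X(\R)\neq\emptyset$; alternatively one may use the rational point $m_\infty$. A smooth real point $P\in X(\R)$, with residue field $\R$, gives a well-defined evaluation map on unramified cohomology $H^3_{nr}(\R(X)/\R,\Z/2)\to H^3(\R,\Z/2)$ whose composite with the structural map $H^3(\R,\Z/2)\to H^3_{nr}(\R(X)/\R,\Z/2)$ is the identity. Hence the structural map is a split injection, and in particular the nonzero class $(-1,-1,-1)$ survives.

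For surjectivity, I would start from Proposition \ref{premierereduc}: every class in $H^3_{nr}(\R(X)/\R,\Z/2)$ is the image of some $(-1,-1,R(u))$ with $R(u)\in\R[u]$ separable, all of whose roots are real, distinct, and strictly negative. Writing $R(u)=c\prod_{i}(u-b_i)$ with $c\in\R^\times$ and each $b_i<0$, bimultiplicativity of the symbol in its last slot gives
$$(-1,-1,R(u)) = (-1,-1,c) + \sum_i (-1,-1,u-b_i) \in H^3(\R(X),\Z/2).$$
Since $-b_i>0$, each factor is $u-b_i=u+a$ with $a>0$, so Proposition \ref{quatre} yields $(-1,-1,u-b_i)=0$ in $H^3(\R(X),\Z/2)$. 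Therefore $(-1,-1,R(u))=(-1,-1,c)$, which is the image of the constant class $(c,-1,-1)\in H^3(\R,\Z/2)$ (equal to $0$ when $c>0$ and to $(-1,-1,-1)$ when $c<0$). Thus every unramified class lies in the image of $H^3(\R,\Z/2)$, giving surjectivity. Combining the two steps yields the isomorphism.

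The genuinely substantive work is already packaged in the earlier propositions — the reduction to symbols $(-1,-1,R(u))$ with negative real roots (Proposition \ref{premierereduc}), and the sum-of-four-squares computation behind $(u+a,-1,-1)=0$ for $a\ge 0$ (Propositions \ref{localglobal} and \ref{quatre}). Consequently the only step that remains here is to assemble them, and the main point requiring care is the bookkeeping of the leading coefficient $c$: it is precisely the sign of $c$ that accounts for the nontrivial constant class $(-1,-1,-1)$ being exactly the image one recovers, which is what makes the structural map a bijection rather than merely surjective with larger image.
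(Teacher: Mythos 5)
Your proof is correct and follows exactly the paper's route: the paper's own proof of this theorem is the one-line "On combine les propositions \ref{premierereduc} et \ref{quatre}", and your argument is precisely that combination spelled out — factoring $R(u)$ into linear terms $u-b_i$ with $b_i<0$, killing each $(u-b_i,-1,-1)$ via Proposition \ref{quatre}, and keeping track of the constant $(-1,-1,c)$, together with the standard split-injectivity coming from evaluation at a real point of the smooth proper variety $X$.
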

\begin{proof} On combine les propositions \ref{classique}, \ref{premierereduc} et \ref{quatre}. 
\end{proof}

\begin{rmk}\label{bonnefibquad}
Le r\'esultat ci-dessus peut aussi se d\'eduire d'un r\'esultat g\'en\'eral
qui se d\'eduit formellement de trois articles de O. Benoist et O. Wittenberg.
Soit $X \to \P^1_{\R}$ une fibration en surfaces quadriques de type (I), avec $X/\R$ projective et lisse. 
Soit $G={\rm Gal}(\C/\R)$.
  Alors le $G$-module galoisien $\Pic(X_{\C})$
 est de permutation. Supposons de plus $X(\R)$ connexe non vide.

 D'apr\`es \cite[Thm. 8.1]{BW20b}, la conjecture de Hodge r\'eelle enti\`ere
 vaut pour  les 1-cycles sur $X$. 

 D'apr\`es \cite[Thm 3.16]{BW20a}, pour $X$  
 comme  ci-dessus,  ceci implique  que
 $$CH_{1}(X) \to H_{1}(X(\R),\Z/2)$$
 est surjective.
 
 D'apr\`es le th\'eor\`eme 4.3 de \cite{BW}, on peut alors conclure
 que pour tout $G$-module $M$, et tout $i$,
 on a $H^{i}(G,M) \simeq H^{i}_{nr}(\R(X),M)$.
\end{rmk}

\subsection{Les crit\`eres de $CH_{0}$-trivialit\'e  dans le cas r\'eel}
 
 Le crit\`ere \ref{equivalencesk} se sp\'ecialise en l'\'enonc\'e suivant.
 
 \begin{thm}\label{equivalencesR}
Dans la situation \ref{Situation 2},
les conditions suivantes sont \'equivalentes :
\begin{itemize}
\item[(a)] On a $A_{0}(X_{\R(X)})=0$.
\item[(b)] Pour tout corps $F$ contenant $\R$, on a $A_{0}(X_{F})=0$.
\item[(c)] Pour le corps $F=\R(\Delta)$, l'application
$H^3(F,\Z/2) \to H^3_{nr}(F(X)/F,\Z/2)$  est un isomorphisme.
\item[(d)] 
L'application
$$\Z/2= H^3_{nr}(\R(\Delta)/\R,\Z/2)  \to H^3_{nr}(\R(W)/\R, \Z/2).$$
est un isomorphisme.
\item[(e)] La somme $u+v \in \R(W)$ des images r\'eciproques de
 $u \in \R(\P^1) \subset \R(X)$ et de $v \in \R(\Delta)$ dans $\R(X \times_{\R}\Delta)$
satisfait  l'\'equation  $$(u+v,-1, -1)= 0 \in H^3_{nr}(\R(W)/\R,\Z/2).$$
\item[(f)] La fonction $u+v \in \R(W)$ est une  somme de 4 carr\'es.
\end{itemize}
\end{thm}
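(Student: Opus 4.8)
Le plan est de ramener cet énoncé au théorème \ref{equivalencesk}, déjà établi, appliqué avec $k=\R$ et $a=b=-1$. La donnée coïncide : la fibration d'équation $x^2+y^2+z^2-u.p(u).t^2=0$ est exactement le cas $a=b=-1$ de (\ref{E1}), et comme $ab=1$, l'équation $w^2=ab.vp(-v)$ de $\Delta$ se réduit à $w^2=vp(-v)$. Les six conditions (a)--(f) sont alors la spécialisation des conditions homonymes du théorème \ref{equivalencesk}, en observant que pour $a=b=-1$ la forme quadratique $x^2-ay^2-bz^2+abt^2$ de la condition (f) devient $x^2+y^2+z^2+t^2$, de sorte que la représentabilité de $u+v$ par cette forme équivaut à ce que $u+v$ soit une somme de $4$ carrés dans $\R(W)$.

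La seule hypothèse du théorème \ref{equivalencesk} à vérifier est donc $(p(u),-1,-1)=0\in H^3(\R(u),\Z/2)$. Je vérifierais d'abord que la positivité de $p$ sur $\R$ entraîne que $p$ est une somme de deux carrés dans $\R(u)$ : comme $p$ est séparable et strictement positif, il n'a pas de racine réelle et se factorise en un produit de facteurs $(u-\alpha)(u-\overline{\alpha})=(u-\operatorname{Re}\alpha)^2+(\operatorname{Im}\alpha)^2$, chacun somme de deux carrés ; l'identité des deux carrés (norme dans $\C$) montre alors que le produit $p(u)$ est lui-même somme de deux carrés de polynômes. A fortiori $p(u)$ est représenté par la forme $<<-1,-1>>=x^2+y^2+z^2+t^2$, et la proposition \ref{classique} donne $(p(u),-1,-1)=0$. (On notera que c'est précisément l'hypothèse de positivité, traduite juste avant l'énoncé par la connexité non vide de $\Delta(\R)$ et de $X(\R)$.)

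Une fois cette hypothèse acquise, le théorème \ref{equivalencesk} fournit directement l'équivalence des conditions (a), (b), (c), (e), (f), ainsi que l'isomorphisme $H^3_{nr}(\R(\Delta)/\R,\Z/2)\to H^3_{nr}(\R(W)/\R,\Z/2)$ correspondant à sa condition (d). Pour obtenir la condition (d) sous la forme énoncée, il reste seulement à identifier la source $H^3_{nr}(\R(\Delta)/\R,\Z/2)$ avec $\Z/2$. Cela a déjà été observé avant l'énoncé, via la proposition \ref{ww}(i) : la courbe $\Delta$ est projective, lisse, géométriquement connexe, et $\Delta(\R)$ est connexe non vide, d'où $H^i_{nr}(\R(\Delta)/\R,\Z/2)=H^i(\R,\Z/2)=\Z/2$ pour $i\geq 2$.

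Je n'anticipe pas d'obstacle sérieux : tout le contenu est contenu dans le théorème \ref{equivalencesk} et dans la proposition \ref{ww}. La difficulté principale, purement formelle, est de s'assurer que les six conditions réelles sont bien les spécialisations exactes des six conditions générales — en particulier la traduction de la condition (f) en « somme de $4$ carrés » et l'identification de la source dans la condition (d) — et de contrôler la vérification de l'hypothèse $(p(u),-1,-1)=0$, qui repose uniquement sur le fait élémentaire qu'un polynôme positif d'une variable est une somme de deux carrés.
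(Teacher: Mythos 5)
Votre démonstration est correcte et suit essentiellement la même voie que l'article : celui-ci présente le théorème \ref{equivalencesR} comme la spécialisation directe du théorème \ref{equivalencesk} au cas $k=\R$, $a=b=-1$, en notant que l'hypothèse $(p(u),-1,-1)=0$ équivaut à la positivité de $p$ (somme de deux carrés) et que $H^3_{nr}(\R(\Delta)/\R,\Z/2)=\Z/2$ puisque $\Delta(\R)$ est connexe non vide. Vous explicitez seulement un peu plus les vérifications (factorisation de $p$ en facteurs quadratiques et identité des deux carrés, traduction de la condition (f) en somme de quatre carrés), ce qui est conforme à ce que le texte sous-entend.
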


\begin{rmk}
On a vu dans la section \ref{situationR} que $u+v \in \R(W)$ est une somme de  6 carr\'es.
\end{rmk}

\begin{rmk}
On verra plus loin (th\'eor\`eme \ref{finitude} et remarque \ref{remfinitude})
 que dans la situation \ref{Situation 2} le groupe
$H^3_{nr}(\R(W)/\R, \Z/2)$ est  fini.
\end{rmk}

\begin{rmk}
Soit $X$ comme ci-dessus.
 Par un r\'esultat g\'en\'eral sur les formes quadratiques en 4 variables (\cite[Chap. VII. Lemma 3.1]{Lam},  
  \cite[Prop. 7.2.4]{CTSkB}),
 la forme quadratique $<1,1,1,-u>$ en 4 variables 
a un z\'ero dans $F=\R(X)$ si et seulement si la forme $<1,1,1,1>$
 a un z\'ero dans
l'extension discriminant $E=F(\sqrt{-u})$, i.e. si et seulement si la classe
de l'alg\`ebre de quaternions $(-1,-1) \in \Br(\R)$ s'annule dans $\Br(E)$.

Posons $-u=w^2$.
Le corps $E$ est le corps des fonctions de la vari\'et\'e d'\'equation
$$x^2+y^2+z^2= -w^2.P(-w^2).$$
Comme  $P(u)$ est s\'eparable, non constant et non nul en $u=0$,
le polyn\^{o}me non constant $P(-w^2)$ a toutes ses racines simples sur $\C$
et donc n'est pas un carr\'e.
Le discriminant $w^2.P(-w^2)$ de la forme quadratique
$$<1,1,1, w^2.P(-w^2))>$$
sur le corps $\R(w)$ 
n'est  donc pas un carr\'e dans $\R(w)$ ni m\^eme dans $\C(w)$. Ceci implique que
  l'application $\Br(\R(w)) \to \Br(E)$ est injective \cite[Prop. 7.2.4]{CTSkB}.
  
Ainsi $\Br(\R) \to \Br(E)$ est injective. On conclut :

  {\it La fonction $u \in \R(X)$, 
qui est une somme de 4 carr\'es dans
$\R(X)$, n'est pas une somme de
3 carr\'es dans  $\R(X)$.}

Mais un tel \'enonc\'e ne permet pas de d\'ecider de la non rationalit\'e d'un
solide r\'eel. De fait $1+x^2+y^2+z^2$ n'est pas une somme de 3 carr\'es 
dans $\R(x,y,z)=\R(\P^3)$ (Cassels-Pfister) \cite[Chap. IX, Cor. 2.4]{Lam}.
\end{rmk}

\section{$CH_{0}$-trivialit\'e de    $X$   par  sommes de 4 carr\'es dans $\R(X\times_{\R}\Delta)$}\label{sectioncarres}

On va maintenant donner des exemples o\`u l'on peut v\'erifier la condition  $(f)$ du th\'eor\`eme \ref{equivalencesR} et l'on peut donc d\'emontrer que la vari\'et\'e $X$ est   universellement $CH_{0}$-triviale.

\begin{thm}\label{ex2}  
	Soit $p(u)=u^2+au+b\in \R[u]$ un polyn\^ome s\'eparable positif. Supposons  
	$$b\geq \frac{a^2}{3}.$$
	Dans la situation \ref{Situation 2}, on a :
\begin{enumerate}
\item  [(i)]
la fonction $$r(u,v)=\frac{up(u)+vp(-v)}{u+v}$$ est une somme de 3 carr\'es dans $\R(u,v)$;
\item [(ii)] la fonction $u+v$ est une somme de 4 carr\'es dans $\R(W)$;
\item [(iii)] pour tout corps $F$ contenant $\R$, on a $A_{0}(X_{F})=0$.
\end{enumerate}
	\end{thm}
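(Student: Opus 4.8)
Le plan est de déduire (iii) de (ii) grâce au théorème \ref{equivalencesR} (équivalence de (f) et (b)), puis de déduire (ii) de (i) en exploitant les équations de $X$ et de $\Delta$ dans $\R(W)$ ; le véritable calcul est l'assertion (i).

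Je commencerais par simplifier $r(u,v)$. En développant $up(u)+vp(-v)=(u^3+v^3)+a(u^2-v^2)+b(u+v)$ et en factorisant $u^3+v^3=(u+v)(u^2-uv+v^2)$ et $u^2-v^2=(u+v)(u-v)$, on obtient l'expression polynomiale
$$r(u,v)=u^2-uv+v^2+a(u-v)+b.$$
Pour établir (i), je regarderais $r$ comme la forme quadratique en $(u,v,1)$ de matrice symétrique
$$M=\begin{pmatrix} 1 & -1/2 & a/2 \\ -1/2 & 1 & -a/2 \\ a/2 & -a/2 & b \end{pmatrix},$$
et je montrerais que $M$ est semi-d\'efinie positive. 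Ses deux premiers mineurs principaux valent $1$ et $3/4$, tous deux positifs, tandis qu'un calcul direct donne $\det M=(3b-a^2)/4$. L'hypoth\`ese $b\geq a^2/3$ \'equivaut donc exactement \`a $\det M\geq 0$, ce qui fait de $M$ une matrice semi-d\'efinie positive de rang au plus $3$. Par diagonalisation d'une matrice r\'eelle sym\'etrique semi-d\'efinie positive, la forme $r$ est somme d'au plus trois carr\'es de formes affines en $u,v$, d'o\`u (i). Le coin $2\times 2$ sup\'erieur gauche \'etant d\'efini positif, $r$ n'est pas identiquement nul et la division par $r$ a un sens.

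Pour (ii), j'utiliserais les deux \'equations dans le corps $\R(W)$ : celle de $X$ donne $up(u)=x^2+y^2+z^2$ et celle de $\Delta$ donne $vp(-v)=w^2$, de sorte que
$$up(u)+vp(-v)=x^2+y^2+z^2+w^2$$
est une somme de quatre carr\'es dans $\R(W)$. Comme $up(u)+vp(-v)=(u+v)\,r(u,v)$, on peut \'ecrire
$$u+v=\frac{x^2+y^2+z^2+w^2}{r(u,v)}=\frac{\big(x^2+y^2+z^2+w^2\big)\,r(u,v)}{r(u,v)^2}.$$
Le num\'erateur est le produit d'une somme de quatre carr\'es et de $r(u,v)$, lui-m\^eme somme de trois (donc de quatre) carr\'es d'apr\`es (i) ; par l'identit\'e des quatre carr\'es d'Euler, c'est-\`a-dire la multiplicativit\'e de la forme de Pfister $<<-1,-1>>$, ce produit est une somme de quatre carr\'es. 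En divisant par le carr\'e $r(u,v)^2$, on conclut que $u+v$ est une somme de quatre carr\'es dans $\R(W)$, ce qui est (ii).

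Enfin, (iii) r\'esulte de (ii) et de l'\'equivalence (f)$\Leftrightarrow$(b) du th\'eor\`eme \ref{equivalencesR}, dont les hypoth\`eses sont bien remplies : $p$ est unitaire, s\'eparable, de degr\'e pair et positif, et $b\geq a^2/3$ joint \`a la s\'eparabilit\'e force $p(0)=b>0$. Le point le plus d\'elicat, et le c\oe ur de la preuve, est l'identification de $r$ \`a une forme quadratique ternaire dont le discriminant fait pr\'ecis\'ement appara\^itre $3b-a^2$ ; tout le reste ne repose que sur l'alg\`ebre lin\'eaire r\'eelle et sur la multiplicativit\'e classique des sommes de quatre carr\'es.
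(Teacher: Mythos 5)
Your proposal is correct and follows essentially the same path as the paper: the same factorization $up(u)+vp(-v)=(u+v)\,r(u,v)$ with $r(u,v)=u^2-uv+v^2+a(u-v)+b$, the same use of multiplicativity of sums of four squares plus division by $r(u,v)^2$ for (ii), and the same appeal to the equivalence (f)$\Leftrightarrow$(b) of Theorem \ref{equivalencesR} for (iii); for (i) the paper simply writes the three squares explicitly,
$$r(u,v)=\left(u+\frac{a-v}{2}\right)^2+\frac{3}{4}\left(v-\frac{a}{3}\right)^2+\left(b-\frac{a^2}{3}\right),$$
which is precisely the decomposition your positive semi-definiteness argument produces abstractly. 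The one point to tighten in your version: nonnegativity of the leading principal minors does \emph{not} imply positive semi-definiteness in general, so the conclusion about $M$ should be justified by noting that the upper-left $2\times 2$ block is positive definite and $\det M\geq 0$, and then invoking the Schur complement (or a small perturbation of the $(3,3)$ entry) --- which is exactly the situation at hand, so your argument is sound once this is said.
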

	\begin{proof}
	Dans $\R(u,v)$, on a
	\begin{multline*}
	up(u)+vp(-v)=(u+v)(u^2-uv+v^2+au-av+b)=\\
	 =(u+v)\left(\left(u+\frac{a-v}{2}\right)^2 + \frac{3}{4}\left(v-\frac{a}{3}\right)^2+b-\frac{a^2}{3}\right). 
		\end{multline*}
		Par hypoth\`ese, on a  $b-\frac{a^2}{3}\geq 0$.
		Ainsi $$r(u,v)=\frac{up(u)+vp(-v)}{u+v}=\left(u+\frac{a-v}{2}\right)^2 + \frac{3}{4}\left(v-\frac{a}{3}\right)^2+b-\frac{a^2}{3}$$ est une somme de 3  carr\'es dans $\R(u,v)$, 
		et donc certainement une somme de 4 carr\'es.
		Dans le corps des fonctions $\R(W)$, on a 
		$$ x^2+y^2+z^2+w^2= up(u)+vp(-v)= (u+v).r(u,v).$$
		Comme le produit de deux sommes de 4 carr\'es dans un corps est une somme de 4 carr\'es 
		(Euler),
		et qu'il en est donc ainsi aussi pour le quotient de deux telles sommes, 
		on obtient que $u+v$ est une somme de 4 carr\'es dans $\R(W)$. Le dernier \'enonc\'e r\'esulte du th\'eor\`eme \ref{equivalencesR}.
	\end{proof}

\begin{rmk}	
\begin{itemize}
\item [(1)] Par exemple, la proposition ci-dessus s'applique \`a la vari\'et\'e $X$ donn\'ee par l'\'equation $x^2+y^2+z^2=u(u^2+1)$.
\item [(2)] Le polyn\^ome $p$ est positif si et seulement si $b>\frac{a^2}{4}$. Dans la proposition ci-dessus, on a besoin de la condition plus forte  $b\geq \frac{a^2}{3}$  (sauf pour $a=0$).
\end{itemize}
	\end{rmk}

En degr\'e sup\'erieur, nous avons les exemples suivants:

\begin{thm}\label{ex2n}
	Soit $n\geq 1$ un entier, et soit $$p(u)=u^{2n}+\sum_{i=0}^{n-1}a_{2i}u^{2i}
	\in \R[u]$$ un polyn\^ome s\'eparable positif, tel que $p(0)\neq 0$. Supposons de plus  
	$$a_{2i}\geq 0 \mbox{ pour tout } 0\leq i<n.$$
Dans la situation \ref{Situation 2}, on a :
\begin{enumerate}
 \item [(i)] La fonction $$r(u,v)=\frac{up(u)+vp(-v)}{u+v}$$ est une somme de 4 carr\'es dans $\R(u,v)$.
\item [(ii)] La fonction $u+v$ est une somme de 4 carr\'es dans $\R(W)$.
\item [(iii)] Pour tout corps $F$ contenant $\R$, on a $A_{0}(X_{F})=0$.
\end{enumerate}
\end{thm}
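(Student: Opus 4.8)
The plan is to exploit the fact that $p$ has only even-degree terms, so that $p(-v)=p(v)$ and the polynomial $g(t):=t\,p(t)=t^{2n+1}+\sum_{i=0}^{n-1}a_{2i}t^{2i+1}$ is \emph{odd}. Then $u\,p(u)+v\,p(-v)=g(u)+g(v)$, and since $g$ is odd the sum $u+v$ divides $g(u)+g(v)$, giving
$$ r(u,v)=\frac{g(u)+g(v)}{u+v}=h_n(u,v)+\sum_{i=0}^{n-1}a_{2i}\,h_i(u,v), \qquad h_m(u,v):=\frac{u^{2m+1}+v^{2m+1}}{u+v}. $$
First I would establish (i). The key point is that each binary form $h_m$ is positive semi-definite on $\R^2$: writing $h_m(u,v)=v^{2m}\,\dfrac{t^{2m+1}+1}{t+1}$ with $t=u/v$, one checks that $t^{2m+1}+1$ and $t+1$ have the same sign for every real $t\neq -1$ (both positive for $t>-1$ since $t^{2m+1}>-1$, both negative for $t<-1$), so $h_m\geq 0$ everywhere, the case $v=0$ being clear. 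Being a nonnegative binary form, $h_m$ is classically a sum of two squares of forms. Since every coefficient $a_{2i}$ is $\geq 0$, it follows that $r$ is a sum of squares in $\R(u,v)$; by Pfister's bound on the Pythagoras number of $\R(u,v)$, which is at most $2^2=4$ (see \cite{Lam}), $r$ is in fact a sum of $4$ squares. This is (i).

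Next I would deduce (ii). In the function field $\R(W)$ the defining equations of $X$ and $\Delta$ give
$$ x^2+y^2+z^2+w^2=u\,p(u)+v\,p(-v)=(u+v)\,r(u,v), $$
hence $u+v=\dfrac{(x^2+y^2+z^2+w^2)\,r(u,v)}{r(u,v)^2}$. The numerator is a product of two sums of $4$ squares, the left factor manifestly and $r$ by (i), so by the four-square (Euler) identity it is again a sum of $4$ squares; dividing by the square $r(u,v)^2$ preserves this, so $u+v$ is a sum of $4$ squares in $\R(W)$, proving (ii). Finally, (ii) is exactly condition (f) of Theorem~\ref{equivalencesR}, which is equivalent there to condition (b), namely $A_0(X_F)=0$ for every field $F$ containing $\R$; this gives (iii).

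The only genuine subtlety, and the step I expect to require the most care, is the passage from ``$r$ is a sum of squares'' to ``$r$ is a sum of $4$ squares'': the naive assembly from the $h_m$ produces $2(n+1)$ squares, and it is Pfister's theorem on the Pythagoras number of $\R(u,v)$ that collapses this to $4$. Everything else is elementary: the evenness of $p$, the divisibility of $g(u)+g(v)$ by $u+v$, the sign analysis showing $h_m\geq 0$, and the four-square identity. The hypothesis $a_{2i}\geq 0$ is precisely what guarantees that $r$ is totally positive, hence a sum of squares at all.
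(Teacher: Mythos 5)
Your proof is correct and follows essentially the same route as the paper's: your decomposition of $r$ into the terms $h_m(u,v)=v^{2m}\frac{t^{2m+1}+1}{t+1}$ (with $t=u/v$) is exactly the paper's substitution $u=tv$, the nonnegativity of each term is checked by the same elementary sign analysis, the reduction to four squares rests on the same theorem of Pfister \cite[XI.4.11]{Lam}, and (ii), (iii) then follow identically via the four-square identity and Theorem~\ref{equivalencesR}. The only cosmetic difference is your intermediate remark that each $h_m$, being a positive semidefinite binary form, is a sum of two squares of forms; this step is not needed, since Pfister's theorem applies directly to the nonnegative rational function $r$.
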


\begin{proof}
On fait le changement de variables $u=tv$ et on \'ecrit:
\begin{multline*}
r(tv,v)=\frac{tvp(tv)+vp(-v)}{tv+v}
=v^{2n}\frac{t^{2n+1}+1}{t+1}+\sum_{i=0}^{n-1}a_{2i}v^{2i}\frac{t^{2i+1}+1}{t+1}.
\end{multline*}

Notons que pour tout $i>0$ le polyn\^ome $\frac{t^{2i+1}+1}{t+1}$ a pour racines dans $\C$ des racines de l'unit\'e $\zeta$ avec $\zeta^{4i+2}=1$, distinctes de $\zeta=1$ et $\zeta=-1$, donc non r\'eelles.  Comme le coefficient dominant est $1$, pour toute valeur $t\in \R$ ce polyn\^ome ne prend que des valeurs positives sur $\R$. Puisque $a_{2i}\geq 0$ pour tout $i$ par hypoth\`ese, on en d\'eduit que le polyn\^ome $r(u,v)=r(tv,v)$ ne prend que des valeurs positives sur $\R$, c'est donc une somme de $4$ carr\'es dans $\R(u,v)$ d'apr\`es un th\'eor\`eme de Pfister (voir \cite[XI.4.11]{Lam}). 

Dans le corps des fonctions $\R(W)$ de la vari\'et\'e $W$, on a l'\'egalit\'e
$$(u+v)r(u,v)=x^2+y^2+z^2+w^2.$$
Ainsi, $u+v \in \R(W)$, 
qui est un quotient de deux sommes de 4 carr\'es dans $\R(W)$, 
 est une somme de 4 carr\'es.
 Le dernier \'enonc\'e r\'esulte du th\'eor\`eme \ref{equivalencesR}.
\end{proof}

\begin{rmk} Dans l'exemple ci-dessus, tous les coefficients de puissances impaires de $t$ sont nuls.
Si l'on rel\^{a}che cette hypoth\`ese, on peut au moins
 trouver un ouvert $U\subset \R^{2n+1}$ (pour la topologie euclidienne) dans l'espace  de coefficients de polyn\^omes  
$$p(u)=\sum_{i=0}^{2n} a_iu^{i},$$ tel que pour tout 
$(a_{2n}, a_{2n-1},\ldots, a_0)\in U$   la fonction 
$$r(u,v)=\frac{up(u)+vp(-v)}{u+v}$$  soit  positive sur $\R(u,v)$, et donc soit une somme de 4 carr\'es. 
Par \'evaluation en $v=0$, le polyn\^{o}me $p(u)$ est alors positif.
Par exemple, on peut d\'efinir $U$ comme suit.

Soit $0<i<n$.  Le polyn\^ome $$v^{2n}\frac{t^{2n+1}+1}{t+1}+v^{2i-1}\frac{t^{2i}-1}{t+1}\in \R[v,t]$$ est un polyn\^ome  de terme dominant $v^{2n}t^{2n}$, il est donc positif pour $v,t\geq T_i$ avec $T_i>0$ assez grand, et il est born\'e dans le compact $-T_i\leq v,t\leq T_i$. Il existe donc $m_i\in \R$ tel que $$v^{2n}\frac{t^{2n+1}+1}{t+1}+v^{2i-1}\frac{t^{2i}-1}{t+1}>m_i$$ pour tous $v,t\in \R$.
On prend $(a_{2n}, a_{2n-1},\ldots, a_0)$ dans l'ouvert $U\subset \R^{2n+1}$ d\'efini par les conditions
$$a_i\geq 0 \mbox{ pour tout }i,\; a_{2n}>\sum_{i=1}^{n} a_{2i-1}, \; a_0+\sum_{i=1}^{n-1}a_{2i-1}m_i>0.$$
On fait le changement de variables $u=vt$ et on \'ecrit:
\begin{multline*}
r(tv,v)=\frac{tvp(tv)+vp(-v)}{tv+v}=\\
=a_{2n}v^{2n}\frac{t^{2n+1}+1}{t+1}+\sum_{i=1}^{n-1}a_{2i-1}v^{2i-1}\frac{t^{2i}-1}{t+1}+\sum_{i=1}^{n-1}a_{2i}v^{2i}\frac{t^{2i+1}+1}{t+1}+a_0=\\
=(a_{2n}-\sum_{i=1}^{n} a_{2i-1})v^{2n}\frac{t^{2n+1}+1}{t+1}+\sum_{i=1}^{n-1}a_{2i-1}(v^{2n}\frac{t^{2n+1}+1}{t+1}+v^{2i-1}\frac{t^{2i}-1}{t+1}-m_i)+\\+\sum_{i=1}^{n-1}a_{2i}v^{2i}\frac{t^{2i+1}+1}{t+1}+(a_0+\sum_{i=1}^{n-1}a_{2i-1}m_i).
\end{multline*}
Comme dans la proposition ci-dessus, les fonctions $v^{2i}\frac{t^{2i+1}+1}{t+1}$ sont positives pour tout $i>0$. Tous les autres termes de la somme sont positifs par le choix de coefficients dans $U$.

En divisant par $a_{2n}$ on peut aussi supposer que $p$ est unitaire. La condition suppl\'ementaire que $p$ est s\'eparable est aussi ouverte.

\end{rmk}

\section{Lemmes de cohomologie galoisienne}\label{lemmescoho}
	Soit $K/k$ une extension finie galoisienne de corps, de groupe de Galois $G$.
	On note $N_{K/k}$  la norme de $K$ \`a $k$.
	On note $N_{G}=\sum_{g\in G} g \in \Z[G]$ et $I_{G} \subset \Z[G]$ 
	le noyau de l'augmentation $\epsilon : \Z[G] \to \Z$ envoyant tout $g\in G$ sur $1 \in\Z$.
	Pour $M$ un $G$-module, on utilise la cohomologie galoisienne de Tate.
	Notant ${}^{N_{G}}M$ le noyau de $N_{G} : M  \to M$, on sait que l'on a
	 $\hat{H}^{-1}(G,M) \simeq {}^{N_{G}}M/I_{G}M$ et $\hat{H}^0(G,M)= M^G/N_{G}M$.
	 Pour $G$ cyclique, et $i\in \Z$, on a des isomorphismes $\hat{H}^{i}(G,M) \simeq \hat{H}^{i+2}(G,M)$.

	\begin{lemma}\label{suiteHm1}
	Soit $X$ une $k$-vari\'et\'e projective et lisse g\'eom\'etriquement int\`egre, avec un $k$-point.
	On a des suites exactes
	\begin{equation} \label{s1}
	0 \to H^2(G,K^{\times}) \to \Ker[\Br(X) \to \Br(X_{K})] \to H^1(G,\Pic(X_{K})) \to 0
	\end{equation}
	et
	\begin{multline} \label{s2}
	1 \to k^{\times}/N_{K/k}K^{\times} \to \Ker[k(X)^{\times}/N_{K/k}(K(X)^{\times}) \to \Div(X)/N_{K/k}(\Div(X_{K}))]\to \\
	\to \hat{H}^{-1}(G,\Pic(X_{K}))  \to 0.
\end{multline}	
	Dans la suite (\ref{s2}), l'application de droite, qui est surjective, envoie la classe d'une fonction
$f\in k(X)^{\times}$ avec $ div_{X_{K}}(f)= N_{K/k}(\Delta)$ sur la classe de 
	$\Delta$ dans le groupe $ \hat{H}^{-1}(G,\Pic(X_{K})).$ 
       Lorsque $K/k$ est cyclique de degr\'e $n$, d\'efinie par un caract\`ere $\chi \in H^1(K/k,\Z/n)$,
	on passe de la seconde suite \`a la premi\`ere en prenant  le cup-produit avec $\chi$.
\end{lemma}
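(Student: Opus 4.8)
The plan is to deduce both sequences from the two fundamental short exact sequences of $G$-modules attached to $X_{K}$. Since $X$ is proper and geometrically integral, $X_{K}$ is integral with global units $K^{\times}$, so the divisor map and the definition of the Picard group give exact sequences of $G$-modules
\[
1 \to K^{\times} \to K(X)^{\times} \xrightarrow{\;\mathrm{div}\;} \mathrm{Prin}(X_{K}) \to 0,
\]
\[
0 \to \mathrm{Prin}(X_{K}) \to \Div(X_{K}) \to \Pic(X_{K}) \to 0,
\]
where $\mathrm{Prin}(X_{K})$ denotes the principal divisors. The key structural input is that $X_{K}\to X$ is finite étale, so $G$ permutes the prime divisors of $X_{K}$ with orbits the prime divisors of $X$; hence $\Div(X_{K})$ is a permutation $G$-module. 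Consequently $\hat{H}^{-1}(G,\Div(X_{K}))=0$, and $\hat{H}^{0}(G,\Div(X_{K}))\simeq \Div(X)/N_{K/k}(\Div(X_{K}))$: for $y$ above $x$ the decomposition group $G_{x}$ is the residue Galois group, the cover being unramified, so $N_{K/k}(y)=|G_{x}|\,x$. I shall also use Hilbert~90 in the form $H^{1}(G,K^{\times})=\hat{H}^{1}(G,K^{\times})=0$.

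For the sequence (\ref{s2}) I would take Tate cohomology in degrees $-1$ and $0$. From the first sequence, Hilbert~90 gives exactness of
\[
k^{\times}/N K^{\times} \to k(X)^{\times}/N_{K/k}(K(X)^{\times}) \xrightarrow{\;\mathrm{div}\;} \hat{H}^{0}(G,\mathrm{Prin}(X_{K})) \to 0,
\]
and from the second sequence the vanishing $\hat{H}^{-1}(G,\Div(X_{K}))=0$ gives exactness of
\[
0 \to \hat{H}^{-1}(G,\Pic(X_{K})) \to \hat{H}^{0}(G,\mathrm{Prin}(X_{K})) \to \hat{H}^{0}(G,\Div(X_{K})).
\]
Splicing these, the middle term of (\ref{s2}) is exactly the preimage under $\mathrm{div}$ of $\hat{H}^{-1}(G,\Pic(X_{K}))$, so $\mathrm{div}$ restricts to a surjection of it onto $\hat{H}^{-1}(G,\Pic(X_{K}))$ whose kernel is the image of $k^{\times}/NK^{\times}$. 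The explicit description follows by chasing this: given $f\in k(X)^{\times}$ with $\mathrm{div}_{X_{K}}(f)=N_{K/k}(\Delta)$, the class $[\Delta]\in\Pic(X_{K})$ is killed by $N_{G}$ and the map sends $f$ to the class of $\Delta$ in $\hat{H}^{-1}(G,\Pic(X_{K}))$; conversely any such $\Delta$ lifts, Hilbert~90 being used once more to render $f$ invariant, hence an element of $k(X)^{\times}$.

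The injectivity of the left-hand map $k^{\times}/NK^{\times}\hookrightarrow k(X)^{\times}/N_{K/k}(K(X)^{\times})$ is, I expect, the main point, and it is exactly here that the hypothesis $X(k)\neq\emptyset$ is used: the $k$-point provides the splitting that forces the connecting map $\hat{H}^{-1}(G,\mathrm{Prin}(X_{K}))\to\hat{H}^{0}(G,K^{\times})$ to vanish. I would establish (\ref{s1}) by the same splitting mechanism, now applied to the Hochschild--Serre spectral sequence $H^{p}(G,H^{q}_{\mathrm{et}}(X_{K},\mathbb{G}_{m}))\Rightarrow H^{p+q}_{\mathrm{et}}(X,\mathbb{G}_{m})$. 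Its low-degree exact sequence, together with $E_{2}^{1,0}=H^{1}(G,K^{\times})=0$, expresses $\Ker[\Br(X)\to\Br(X_{K})]$ as an extension of $\Ker(d_{2}^{1,1})$ by $\mathrm{coker}(d_{2}^{0,1})$, with $d_{2}^{0,1}\colon \Pic(X_{K})^{G}\to H^{2}(G,K^{\times})$ and $d_{2}^{1,1}\colon H^{1}(G,\Pic(X_{K}))\to H^{3}(G,K^{\times})$. The section $s\colon \Spec k\to X$ induces a map from this spectral sequence to that of $\Spec k$, whose row $q=1$ vanishes because $\Pic(\Spec K)=0$; since $s^{*}$ is an isomorphism on the terms $E_{2}^{\ast,0}=H^{\ast}(G,K^{\times})$, both differentials $d_{2}^{0,1}$ and $d_{2}^{1,1}$ must vanish, which yields (\ref{s1}).

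Finally, for cyclic $G=\langle\sigma\rangle$ of order $n$ with associated character $\chi\in H^{1}(G,\Z/n)$, cup-product with $\chi$ induces the Tate periodicity isomorphisms $\hat{H}^{i}(G,M)\xrightarrow{\sim}\hat{H}^{i+2}(G,M)$ for every $G$-module $M$. Applying this termwise and using naturality produces a commuting ladder carrying (\ref{s2}), in degrees $-1$ and $0$, isomorphically onto (\ref{s1}), in degrees $1$ and $2$: on the left $k^{\times}/NK^{\times}=\hat{H}^{0}(G,K^{\times})\xrightarrow{\sim}H^{2}(G,K^{\times})$, on the right $\hat{H}^{-1}(G,\Pic(X_{K}))\xrightarrow{\sim}H^{1}(G,\Pic(X_{K}))$, and in the middle the map sends the class of $f$ to that of the cyclic algebra $(f,\chi)$, recovering $\Ker[\Br(X)\to\Br(X_{K})]$. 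This identifies the stated passage between the two sequences and completes the argument. The two anticipated obstacles are the left-hand injectivity in (\ref{s2}) (the genuine use of a $k$-point) and the permutation-module bookkeeping identifying $\hat{H}^{0}(G,\Div(X_{K}))$ with $\Div(X)/N_{K/k}(\Div(X_{K}))$ through the decomposition groups.
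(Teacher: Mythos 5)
Your overall route is the same as the paper's: the paper obtains (\ref{s2}) by taking Tate cohomology of $1 \to K(X)^{\times}/K^{\times} \to \Div(X_{K}) \to \Pic(X_{K}) \to 0$, which is precisely your splicing of the two short exact sequences (Hilbert~90, vanishing of $\hat{H}^{-1}$ of the permutation module $\Div(X_{K})$, the identification $\Div(X_{K})^{G}=\Div(X)$, and the diagram chase for the right-hand map are all correct), and your Hochschild--Serre argument for (\ref{s1}), with the section killing the differentials $d_{2}^{0,1}$ and $d_{2}^{1,1}$, is the standard proof that the paper invokes by the word ``standard''. The treatment of the cyclic case via Tate periodicity is also fine.

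The one genuine gap is at the step you yourself call the main point, and your proposed mechanism for it fails as stated. You assert that ``the $k$-point provides the splitting'' killing the connecting map $\hat{H}^{-1}(G,\mathrm{Prin}(X_{K}))\to\hat{H}^{0}(G,K^{\times})$, i.e.\ a $G$-equivariant splitting of $1 \to K^{\times} \to K(X)^{\times} \to \mathrm{Prin}(X_{K}) \to 1$. Such a splitting does exist (this is the vanishing of the elementary obstruction of Colliot-Th\'el\`ene--Sansuc in the presence of a rational point), but the $k$-point does not provide it in any direct way: evaluation at the point $P_{K}$ of $X_{K}$ above $P$ is undefined on functions with a zero or pole through $P_{K}$, and no rescaling by constants repairs this, so there is no evaluation retraction $K(X)^{\times}\to K^{\times}$; unlike for (\ref{s1}), the point induces no morphism of the relevant exact sequences. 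To close the gap one needs an actual argument, for instance: set $B=\mathcal{O}_{X_{K},P_{K}}$, a $G$-stable regular local ring, hence a UFD; evaluation does split $1\to K^{\times}\to B^{\times}\to B^{\times}/K^{\times}\to 1$; unique factorization identifies $K(X)^{\times}/B^{\times}$ with the permutation $G$-module on the height-one primes of $B$, and $\operatorname{Ext}^{1}_{G}(\Z[G/H],K^{\times})=H^{1}(H,K^{\times})=0$ by Shapiro and Hilbert~90, so the retraction $B^{\times}\to K^{\times}$ extends to a $G$-retraction of $K(X)^{\times}$, which splits the sequence. (Alternatively, prove directly the needed identity $k^{\times}\cap N_{K/k}(K(X)^{\times})=N_{K/k}(K^{\times})$ by specializing norms along a flag of regular subvarieties through $P$, or, in the cyclic case --- the only one used in the paper --- deduce it from the injectivity of $\Br(k)\to\Br(k(X))$, which follows from $\Br(X)\hookrightarrow\Br(k(X))$ for $X$ regular together with evaluation at $P$.) With any of these inserted, your proof is complete; in fairness, the paper's own two-line proof is silent on this point as well.
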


	\begin{proof} La suite exacte (\ref{s1})  est standard. Pour  obtenir la suite exacte  (\ref{s2})  on prend la cohomologie galoisienne de la suite exacte
	$$ 1 \to K(X)^{\times}/K^{\times} \to \Div(X_{K}) \to \Pic(X_{K}) \to 0.$$
	\end{proof}

	\begin{lemma}\label{Hm1J}
	Soit $K/k$ une extension de corps, finie galoisienne de groupe $G$.
	Soit $\Gamma$ une $k$-courbe projective, lisse, g\'eom\'etriquement int\`egre, poss\'edant
	un $k$-point. Soit $J_{\Gamma}= \Pic^0_{\Gamma/k}$ la jacobienne de $\Gamma$.
	Le degr\'e d\'efinit une suite exacte scind\'ee de $G$-modules
	$$ 0 \to J_{\Gamma}(K) \to \Pic(\Gamma_{K}) \to \Z  \to 0 .$$
	En particulier, on a 
	$$ H^1(G,  J_{\Gamma}(K)) \simeq H^1(G,  \Pic(\Gamma_{K}))$$
	et
	$$ \hat{H}^{-1}(G,  J_{\Gamma}(K)) \simeq  \hat{H}^{-1}(G,  \Pic(\Gamma_{K})).$$
	\end{lemma}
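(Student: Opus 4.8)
Le plan est d'exhiber la suite exacte donn\'ee par le degr\'e comme suite de $G$-modules, puis de la scinder gr\^ace au $k$-point, et enfin de lire les isomorphismes annonc\'es sur la d\'ecomposition en somme directe qui en r\'esulte. Je commencerais par rappeler que, $\Gamma$ poss\'edant un $k$-point, le foncteur de Picard est repr\'esentable et l'on a $\Pic_{\Gamma/k}(K)=\Pic(\Gamma_{K})$ pour toute extension $K/k$; en particulier $J_{\Gamma}(K)=\Pic^0_{\Gamma/k}(K)$ s'identifie au sous-groupe des classes de degr\'e $0$ de $\Pic(\Gamma_{K})$. L'application degr\'e $\deg:\Pic(\Gamma_{K})\to \Z$ est $G$-\'equivariante, le groupe de Galois pr\'eservant le degr\'e et agissant trivialement sur $\Z$; son noyau est exactement $J_{\Gamma}(K)$. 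On dispose donc d'une suite exacte de $G$-modules
$$0\to J_{\Gamma}(K)\to \Pic(\Gamma_{K})\xrightarrow{\deg}\Z.$$

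Ensuite je montrerais que cette suite est surjective \`a droite et scind\'ee. Soit $P_{0}\in\Gamma(k)$ un $k$-point. Sa classe $[P_{0}]\in\Pic(\Gamma)$ est de degr\'e $1$ et $G$-invariante, puisqu'elle provient de $\Pic(\Gamma)$. L'application $\Z\to\Pic(\Gamma_{K})$, $1\mapsto[P_{0}]$, est donc un scindage $G$-\'equivariant du morphisme degr\'e, ce qui prouve \`a la fois sa surjectivit\'e et le fait que la suite est scind\'ee. On obtient ainsi un isomorphisme de $G$-modules
$$\Pic(\Gamma_{K})\simeq J_{\Gamma}(K)\oplus\Z,$$
o\`u $\Z$ est muni de l'action triviale.

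Enfin, la d\'ecomposition en somme directe fournit, pour tout $i$, un isomorphisme $\hat{H}^{i}(G,\Pic(\Gamma_{K}))\simeq\hat{H}^{i}(G,J_{\Gamma}(K))\oplus\hat{H}^{i}(G,\Z)$, et il ne reste qu'\`a v\'erifier que le facteur en $\Z$ est nul dans les deux degr\'es consid\'er\'es. Pour $i=1$, on a $H^1(G,\Z)=\on{Hom}(G,\Z)=0$ car $G$ est fini. Pour la cohomologie de Tate en degr\'e $-1$, on a $\hat{H}^{-1}(G,\Z)={}^{N_{G}}\Z/I_{G}\Z=0$, puisque $N_{G}$ agit par multiplication par $|G|\neq 0$ sur $\Z$, de sorte que ${}^{N_{G}}\Z=0$. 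On en d\'eduit les deux isomorphismes voulus. Le point le moins formel de l'argument est l'identification $\Pic_{\Gamma/k}(K)=\Pic(\Gamma_{K})$ et $J_{\Gamma}(K)=\Pic^0(\Gamma_{K})$, qui repose sur l'existence du $k$-point; tout le reste n'est qu'un d\'evissage cohomologique \'el\'ementaire.
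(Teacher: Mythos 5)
Votre preuve est correcte. Le papier \'enonce ce lemme sans d\'emonstration, le consid\'erant comme standard ; votre argument --- scinder la suite du degr\'e par la classe du $k$-point $[P_{0}]$, puis constater que $H^1(G,\Z)=0$ et $\hat{H}^{-1}(G,\Z)={}^{N_{G}}\Z/I_{G}\Z=0$ --- est pr\'ecis\'ement l'argument sous-entendu, y compris le point non trivial que vous identifiez \`a juste titre, \`a savoir $J_{\Gamma}(K)=\Pic^0(\Gamma_{K})$, qui repose sur l'existence du $k$-point.
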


Le lemme suivant est bien connu.
\begin{lemma}\label{remG}
Soit $G=\Z/2=\{1,\sigma \}$.
Soit $\tilde{\Z}$ le r\'eseau  $\Z$ avec action antipodale de $\sigma$, et soit $\Z[G]$ le r\'eseau $\Z \oplus \Z.\sigma$.
 Tout $G$-r\'eseau $M$ est isomorphe comme $G$-module \`a une somme directe
$$ M \simeq \Z^{a} \oplus \tilde{\Z}^b \oplus \Z[G]^c,$$
o\`u les entiers $a,b,c$ sont uniquement d\'etermin\'es par $M$. 
On  a  :
\begin{enumerate}
\item  $\hat{H}^{i}(G,\Z[G])=0$ pour tout $i$.	
\item $\hat{H}^{i}(G,\Z)= \Z/2$, resp. $0$,  pour $i$ pair, resp. impair.
\item $\hat{H}^{i}(G,\tilde{\Z})= \Z/2$, resp. $0$,  pour $i$ impair, resp. pair.
\end{enumerate}
\end{lemma}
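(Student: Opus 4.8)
Le plan est de traiter successivement l'existence de la d\'ecomposition, l'unicit\'e des entiers $a,b,c$, et enfin les trois calculs de cohomologie, qui sont routiniers.

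Pour l'existence, je poserais $M^-=\Ker(\sigma+1)\subset M$; c'est un sous-$G$-module satur\'e (si $nm\in M^-$ avec $n\neq 0$, alors $n(\sigma m+m)=0$, donc $\sigma m=-m$ puisque $M$ est sans torsion), isomorphe \`a $\tilde\Z^q$ o\`u $q$ est son rang. Je v\'erifierais ensuite que $\sigma$ agit trivialement sur $\bar M:=M/M^-$: pour tout $m\in M$, l'\'el\'ement $\sigma m-m$ satisfait $\sigma(\sigma m-m)=-(\sigma m-m)$, donc appartient \`a $M^-$. Comme $M^-$ est satur\'e, $\bar M$ est sans torsion, donc $\bar M\simeq \Z^p$ comme $G$-module trivial, o\`u $p=\on{rg}(M)-q$. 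On obtient ainsi une suite exacte de $G$-modules
$$0\to \tilde\Z^q\to M\to \Z^p\to 0.$$

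Le point principal est alors de classer cette extension. Sa classe vit dans $\on{Ext}^1_{\Z[G]}(\Z^p,\tilde\Z^q)\simeq \on{Ext}^1_{\Z[G]}(\Z,\tilde\Z)^{pq}$, et je calculerais $\on{Ext}^1_{\Z[G]}(\Z,\tilde\Z)=H^1(G,\tilde\Z)=\Z/2$, l'unique extension non scind\'ee \'etant pr\'ecis\'ement $0\to \tilde\Z\to \Z[G]\to \Z\to 0$, o\`u $\tilde\Z\hookrightarrow \Z[G]$ envoie $1$ sur $1-\sigma$ et o\`u la surjection est l'augmentation. La classe d'extension s'identifie donc \`a une matrice \`a coefficients dans $\F_2$. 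Les automorphismes des $G$-modules $\Z^p$ et $\tilde\Z^q$ op\`erent sur cette matrice par op\'erations de lignes et de colonnes; comme $GL_n(\Z)\to GL_n(\F_2)$ est surjective, on peut ramener la matrice \`a la forme normale $\left(\begin{smallmatrix} I_c & 0\\ 0 & 0\end{smallmatrix}\right)$. Chaque coefficient non nul recolle un facteur $\tilde\Z$ et un facteur $\Z$ en un facteur $\Z[G]$, d'o\`u l'isomorphisme $M\simeq \Z[G]^c\oplus \Z^{p-c}\oplus \tilde\Z^{q-c}$. C'est cette r\'eduction matricielle qui constitue l'obstacle principal; le reste de l'argument est formel.

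Je ferais enfin les calculs de cohomologie, en utilisant la $2$-p\'eriodicit\'e de la cohomologie de Tate (puisque $G$ est cyclique) et les formules $\hat{H}^0(G,M)=M^G/N_GM$ et $\hat{H}^{-1}(G,M)={}^{N_G}M/I_GM$. Pour $\Z[G]$, qui est induit, tous les groupes $\hat{H}^i$ sont nuls, d'o\`u $(1)$. Pour $\Z$, on a $N_G=2$, donc $\hat{H}^0=\Z/2$ et $\hat{H}^{-1}=0$, d'o\`u $(2)$. Pour $\tilde\Z$, on a $N_G=0$ et $I_G\tilde\Z=2\tilde\Z$, donc $\hat{H}^0=0$ et $\hat{H}^{-1}=\Z/2$, d'o\`u $(3)$. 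Ces m\^emes calculs donnent $a=\dim_{\F_2}\hat{H}^0(G,M)$ et $b=\dim_{\F_2}\hat{H}^{-1}(G,M)$, puis $c=(\on{rg}(M)-a-b)/2$, ce qui \'etablit l'unicit\'e de $a,b,c$ et ach\`eve la d\'emonstration.
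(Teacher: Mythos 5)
The paper does not actually prove this lemma: it is stated with the remark \og Le lemme suivant est bien connu \fg{} and no argument is given (it is the classical classification of integral representations of $\Z/2$, going back to the theory of integral representations of cyclic groups of prime order). Your proposal therefore cannot match the paper's proof, but it is a correct and self-contained reconstruction of the standard argument, and all the delicate points are handled properly: the saturation of $M^-=\Ker(\sigma+1)$ (needed so that $M/M^-$ is again a lattice), the identification $\on{Ext}^1_{\Z[G]}(\Z,\tilde{\Z})\simeq H^1(G,\tilde{\Z})=\Z/2$ with the nonsplit extension $0\to\tilde{\Z}\to\Z[G]\to\Z\to 0$ realized by $1\mapsto 1-\sigma$ and the augmentation, and the reduction of the extension matrix over $\F_2$ to the rank normal form, which legitimately uses the surjectivity of $GL_n(\Z)\to GL_n(\F_2)$ so that row and column operations lift to genuine automorphisms of $\Z^p$ and $\tilde{\Z}^q$. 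Your closing observation that $a=\dim_{\F_2}\hat{H}^0(G,M)$, $b=\dim_{\F_2}\hat{H}^{-1}(G,M)$ and $c=(\on{rg}(M)-a-b)/2$ both proves uniqueness and recovers the three cohomology computations, which is exactly the part of the lemma the paper actually uses (in Lemmes \ref{h1mc} et \ref{h1pich}). The only step stated a little tersely is that a block-diagonal extension class yields a direct sum of extensions; this is standard homological algebra and does not constitute a gap.
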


Dans ce qui suit on aura 
 besoin de l'\'enonc\'e suivant:

\begin{lemma}\label{h1mc}
Soit $\omega \in \C$ un entier quadratique imaginaire, satisfaisant une \'equation
$$\omega^2 - d \omega +c =0$$
avec $d,c \in \Z$, $d^2-4c <0$.
Soit $K=\Q(\omega)$.
Soit $G=\Z/2=\{1,\sigma\}$ le groupe de Galois  $\Gal(\C/\R)= \Gal(K/\Q)$.

Soit $M$ le $G$-module $\Z[\omega]$ avec l'action donn\'ee par
$$\sigma(\omega)= d -\omega.$$
Soit $M_1$ le $G$-module $\Z e_1\oplus \Z e_2$ avec l'action donn\'ee par $$\sigma(e_1)=-e_1, \; \sigma(e_2)=-de_1+e_2.$$  
\begin{itemize}
\item [(i)] Si $d$ est pair, alors $\hat{H}^{-1}(G,M)=\hat{H}^{-1}(G,M_1)=\Z/2$.
 \item [(ii)] Si $d$ est impair, alors $\hat{H}^{-1}(G,M)=\hat{H}^{-1}(G,M_1)=0$.
 \end{itemize}
\end{lemma}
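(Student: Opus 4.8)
The plan is to use the description of Tate cohomology for the cyclic group $G=\Z/2$ recalled at the beginning of this section: with $N_G=1+\sigma$ and $I_GM=(\sigma-1)M$, one has $\hat{H}^{-1}(G,M)\simeq {}^{N_G}M/I_GM$, where ${}^{N_G}M=\Ker(N_G:M\to M)$. Since $M$ and $M_1$ are both free of rank $2$, each computation reduces to writing down two explicit rank-one sublattices of a rank-two lattice and comparing them; the whole lemma then comes down to tracking the parity of $d$.

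For $M=\Z\cdot 1\oplus\Z\cdot\omega$ I would first record the action $\sigma(1)=1$, $\sigma(\omega)=d-\omega$, whence $N_G(1)=2$, $N_G(\omega)=d$, and $(\sigma-1)(1)=0$, $(\sigma-1)(\omega)=d-2\omega$. Thus $I_GM=\Z\cdot(d-2\omega)$, while ${}^{N_G}M$ consists of the $x+y\omega$ with $2x+dy=0$. When $d$ is even this kernel is generated by $\omega-d/2$ and $I_GM=\Z\cdot 2(\omega-d/2)$, so the quotient is $\Z/2$; when $d$ is odd the congruence $2x+dy=0$ forces $y$ to be even, the kernel is generated by $2\omega-d$, and this is exactly a generator of $I_GM$, so the quotient vanishes.

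The computation for $M_1=\Z e_1\oplus\Z e_2$ is entirely parallel. From $\sigma(e_1)=-e_1$ and $\sigma(e_2)=-d e_1+e_2$ one gets $N_G(e_1)=0$ and $N_G(e_2)=-d e_1+2e_2$, so that ${}^{N_G}M_1=\Z e_1$ (the condition $2y=0$ forces $y=0$, independently of $d$), while $(\sigma-1)(e_1)=-2e_1$ and $(\sigma-1)(e_2)=-d e_1$ give $I_GM_1=\gcd(2,d)\,\Z e_1$. Hence $\hat{H}^{-1}(G,M_1)=\Z e_1/\gcd(2,d)\Z e_1$, which is $\Z/2$ for $d$ even and $0$ for $d$ odd. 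Combining the two computations gives (i) and (ii).

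There is no genuine obstacle here, the argument being elementary linear algebra over $\Z$; the one point to handle with care is the jump in the kernel ${}^{N_G}M$ for the module $M$: when $d$ is odd the integrality constraint shrinks the kernel from $\Z(\omega-d/2)$, which is not even defined over $\Z$, down to $\Z(2\omega-d)$, and it is precisely this shrinking that makes the cohomology vanish. As a consistency check, one may instead classify $M$ and $M_1$ up to isomorphism via Lemma \ref{remG}: in both cases the parity of $d$ distinguishes the type $\Z\oplus\tilde{\Z}$, for $d$ even, from $\Z[G]$, for $d$ odd, and the values of $\hat{H}^{-1}$ listed there reproduce the stated answer.
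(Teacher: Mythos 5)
Your proof is correct, and all the computations check out: for $M$ the norm is $N_G(x+y\omega)=2x+dy$ and $I_GM=\Z(d-2\omega)$, giving $\Z/2$ or $0$ according to the parity of $d$; for $M_1$ the kernel ${}^{N_G}M_1=\Z e_1$ and $I_GM_1=\gcd(2,d)\,\Z e_1$ give the same answer. Your treatment of $M_1$ is in fact identical to the paper's. Where you diverge is on $M$: you compute $\hat{H}^{-1}(G,M)={}^{N_G}M/I_GM$ directly, whereas the paper instead determines the isomorphism type of the lattice --- showing $M\simeq \Z\oplus\tilde{\Z}$ when $d=2r$ is even (via the antiinvariant $\omega-r$) and $M\simeq \Z[G]$ when $d=2r+1$ is odd (via $\rho=\omega-r$, $\sigma(\rho)=\omega'-r$ with $\rho+\sigma(\rho)=1$, so $\Z\rho+\Z\sigma(\rho)=M$) --- and then reads off the cohomology from Lemma \ref{remG}. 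The consistency check you sketch at the end is precisely the paper's argument, so you have in effect given both proofs. The direct computation is more self-contained; the classification route has the advantage of computing $\hat{H}^{i}$ for all $i$ at once and of explaining structurally why the parity of $d$ is the only thing that matters (it decides whether the lattice splits off a $\tilde{\Z}$ summand or is induced).
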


\begin{proof}
\'Etant donn\'e un $G$-module $M$, notons $M^G=M^{+}$ le sous-module des invariants
et $M^{-}$ le sous-module des antiinvariants, form\'e des \'el\'ements $m$ avec $\sigma(m)=-m$.
Pour le module $M=\Z[\omega]$, on a $M^+=\Z \subset \Z[\omega]$.
Si $d$ est pair, $d=2r$, on v\'erifie que $\Z[\omega]^{-}= \Z[r+\omega]$, donc
$\Z[\omega]^+ \oplus \Z[\omega]^{-}= \Z[\omega]$. Ainsi $M \simeq \Z \oplus \tilde{\Z}$.
Si  $d$ est impair, $d=2r+1$, alors $\omega+\sigma(\omega)=2r+1$ donc $(\omega)-r + (\sigma(\omega)-r)=1$.
Notant $\rho=\omega-r$, on a   $\Z \rho  +  Z \sigma(\rho) = M$ et  donc $M \simeq \Z[G]$.
On conclut avec le lemme \ref{remG}.

Soit maintenant $M_1=\Z e_1\oplus \Z e_2$
 et soit $xe_1+ye_2\in M_1$.
On a alors:
$$(1-\sigma)(xe_1+ye_2)=xe_1+ye_2+xe_1+dye_1-ye_2=(2x+dy)e_1.$$
Par ailleurs,
$$N_{G}(xe_1+ye_2)=xe_1+ye_2-xe_1-dye_1+ye_2=y(-de_1+2e_2).$$
Ainsi la condition $N_{G}(xe_1+ye_2)=0$ est \'equivalente \`a la condition $y=0$ et on trouve ${}^{N_G}M_{1}=\Z e_1$.
On en d\'eduit que pour $d$ impair ${}^{N_G}M_{1}=I_GM_1$ et que pour $d$ pair ${}^{N_G}M_{1}/I_GM_1=\Z/2.$
\end{proof}

\section{Contr\^{o}le de $H^3_{nr}(F(X)/F, \Z/2)$ par le groupe de Brauer d'une tordue de $\Delta_{F}$}\label{fibconiques}
\begin{situation}\label{Situation 3}
Soit $X$ comme dans la situation \ref{Situation 2}. Un mod\`ele affine est donn\'e par
l'\'equation 
$$ x^2+y^2+z^2 - u.p(u)=0.$$ 
On consid\`ere un mod\`ele birationnel de $X$ comme 
{\it fibration en coniques sur le plan 
affine $\A^2_{\R}$}. 
On d\'efinit
$$\tilde{X} \subset \P^2_{\R} \times_{\R} \A^2_{\R}$$
  par l'\'equation
   $$x^2+y^2= (u.p(u)-z^2)t^2$$
   avec coordonn\'ees homog\`enes $(x,y,t)$ pour $\P^2_{\R}$
   et coordonn\'ees affines $(u,z)$ pour $ \A^2_{\R}$.
   La projection   $\tilde{X}\to\A^2_{\R} $ est propre, l'espace total est lisse.
    La fibration est ramifi\'ee  exactement  le long de la courbe affine lisse
   $\Gamma_{1} \subset \A^2_{\R}$ d'\'equation
    $$u.p(u)-z^2=0.$$
   Cette courbe admet une compactification lisse $\Gamma$
   qui est une courbe hyperelliptique (elliptique si le degr\'e de $p$ est 2). Le sch\'ema $\Gamma \setminus \Gamma_{1}$  est r\'eduit \`a un unique point r\'eel $O$.
   L'espace topologique  $\Gamma(\R)$ est connexe et non vide.
\end{situation}

\medskip

  \begin{thm}\label{BOH4}
  Dans la situation \ref{Situation 3}, 
   soit $F$ un corps contenant $\R$.
 Soit  $F'=F(\sqrt{-1})$.
  Soit $G= \Gal(F'/F)$.
  On a un isomorphisme
 $$H^3_{nr}(F(X)/F, \Z/2)/H^3(F,\Z/2) \simeq H^3_{nr}(F(X)/F, \Q/\Z(2))/H^3(F,\Q/\Z(2)).$$
Le groupe $H^3_{nr}(F(X)/F, \Q/\Z(2))/H^3(F,\Q/\Z(2))$ est un sous-quotient du
groupe  $ H^1(G,\Pic(\Gamma_{F'}))$.
  \end{thm}

 \begin{proof}
Si $F$ contient $\sqrt{-1}$, alors la $F$-vari\'et\'e $X_{F}$ est $F$-rationnelle.
 On suppose d\'esormais  $\sqrt{-1} \notin F$.
 
 La vari\'et\'e $X_{F}$ est lisse et on a $X(F) \neq \emptyset$.
 Soit $m \in X(F)$. Pour \'etablir l'isomorphisme, il suffit de montrer
 $$\tilde{H}^3_{nr}(F(X)/F, \Z/2) \simeq \tilde{H}^3_{nr}(F(X)/F, \Q/\Z(2)),$$
 pour les sous-groupes form\'es des \'el\'ements nuls au point $m$.
 La vari\'et\'e $X_{F'}$ est $F'$-rationnelle.
 On a donc   
 $\tilde{H}^3_{nr}(F'(X)/F', \Q/\Z(2))=0$
 (on normalise en un point de $X(F')$ au-dessus de $m$).
 Ceci implique (argument de norme) que $\tilde{H}^{3}_{nr}(F(X)/F, \Q/\Z(2))$  est annul\'e par 2,
 et donc \'egal \`a $\tilde{H}^3_{nr}(F(X)/F, \Z/2)$ par la proposition \ref{lemmaMerk}
 et une compatibilit\'e standard de r\'esidus.
 Ceci \'etablit l'isomorphisme.

 Soient $\tilde{X}$,  $\Gamma_{1} \subset \A^2_{\R}$ et  $\Gamma$  comme ci-dessus.
  On consid\`ere le diagramme commutatif suivant, o\`u $H^{n}(L,i) = H^n(L,\Q/\Z(i))$.

  $$\xymatrix{
  & H^3(F(\tilde{X}),2)/H^3(F,2) \ar[r]             & \oplus_{\xi} H^2(F(\xi_{1}),1) \oplus  H^2(F(\eta_{1}),1)            & \\
0 \ar[r] & H^3(F(\A^2),2)/H^3(F,2) \ar[u] \ar[r]  & \oplus_{\xi} H^2(F(\xi),1) \oplus  H^2(F(\eta),1)  \ar[u] \ar[r] &
 \oplus_{m}   
 H^1(F(m), 0) \\
& F^{\times}  \ar[u]  \ar[r] &  \  \  \  \  \ \ \ \ \   \ \ 0  \ \ \ \ \   \oplus \ \ \ H^2(F'/F,1)  \ar[u]  & }
     $$

Dans les deux lignes sup\'erieures, les fl\`eches horizontales sont des r\'esidus.
La fl\`eche $F^{\times} \to H^2(F'/F,1)$ envoie $c \in F^{\times}$ sur la classe
de l'alg\`ebre de quaternions $(c,-1)$. Tout \'el\'ement du noyau  $H^2(F'/F,1)$ de 
$H^2(F,1) \to H^2(F',1)$ est de cette forme.

La fl\`eche $F^{\times} \to H^3(F(\A^2),2)/H^3(F,2)$
 est d\'efinie 
par l'application $c \mapsto (-1,c,g)$. Les suites verticales sont des complexes.

La ligne horizontale m\'ediane est la suite de Bloch-Ogus (dans la version de Rost  \cite{Rost}), qui pour $\A^2_{F}$ est exacte
 \cite[Prop. 8.6]{Rost}. Les points $m$ sont les points ferm\'es de $\A^2_{F}$.
On note 
$$g(u,z)=u.P(u)-z^2 \in \R[u,z] \subset F[u,z].$$
Le point $\eta$ est le point g\'en\'erique de $\Gamma_{1}$,  courbe donn\'ee par l'\'equation 
$g(u,z)=0$. Les points $\xi \in \A^2_{F}$ sont les autres points de codimension 1.
Le point $\xi_{1}$ est le point de codimension 1 de $\tilde{X}$ au-dessus de $\xi$.
C'est le point g\'en\'erique d'une conique sur $F(\xi)$ de classe associ\'ee
$$(-1,g_{\xi}) \in H^2(F(\xi),\Z/2 ) \subset H^2(F(\xi),1)=\Br(F(\xi)),$$
 o\`u $g_{\xi}$ d\'esigne l'image de $g(u,z)$ dans
le corps r\'esiduel de $\xi$.
Le noyau de $H^2(F(\xi),1)  \to H^2(F(\xi_{1}),1)$
est engendr\'e par la classe
 $(-1,g_{\xi})$. Le point $\eta_{1}$ est donn\'ee par l'\'equation
$x^2+y^2=0$ sur le corps $F(\eta)$.  
Le noyau de $H^2(F(\eta),1) \to H^2(F(\eta_{1}), 1)$ est form\'e des
classes de quaternions  de la forme $(-1,h)$ avec $h \in F(\eta)$.

\medskip

Notons 
$H^3_{nr}(F(X)/\tilde{X},2)$
le sous-groupe de $H^3(F(X),2)$ form\'e des classes non ramifi\'ees aux points
de codimension 1 de $\tilde{X}$.

Un argument analogue \`a celui donn\'e ci-dessus donne un isomorphisme
$$H^3_{nr}(F(X)/\tilde{X}, \Z/2)/H^3(F,\Z/2) \simeq H^3_{nr}(F(X)/\tilde{X}, \Q/\Z(2))/H^3(F,\Q/\Z(2)).$$

Toute classe $$\rho \in \Ker[\Br(\Gamma_{F}) \to \Br(\Gamma_{F'})]$$
d\'efinit un \'el\'ement  $(0, \rho) \in \oplus_{\xi} H^2(F(\xi),1) \oplus  H^2(F(\eta),1)$
dont l'image dans le groupe $\oplus_{m} H^1(F(m), 0)$ est nulle. Comme la suite de Bloch-Ogus sur $\A^2_{F}$
est exacte, une telle classe est l'image d'un unique \'el\'ement $\alpha$ de
$H^3(F(\A^2),2)/H^3(F,2)$.
 L'image $\beta$ de cet \'el\'ement dans 
$H^3(F(\tilde{X}),2)/H^3(F,2)$ 
a ses r\'esidus nuls  en tous les points
de codimension 1 de $\tilde{X}$ au-dessus du point g\'en\'erique de $\A^2_{F}$
et en tous les points $\xi_{1}$ et en le point
$\eta_{1}$.  On a ainsi d\'efini un homomorphisme
$$ \Ker[\Br(\Gamma_{F}) \to \Br(\Gamma_{F'})] \to H^3_{nr}(F(X)/\tilde{X}, 2)/H^3(F,2).$$
La partie inf\'erieure du diagramme donne que cet homomorphisme
passe au quotient \`a gauche par  $\Ker[\Br(F) \to \Br(F')]$.
Comme on a $\Gamma(\R)\neq \emptyset$ et donc $\Gamma(F) \neq \emptyset$
le lemme \ref{suiteHm1} donne que
le quotient \`a gauche s'identifie \`a $H^1(G, \Pic(\Gamma_{F'}))$.
On a donc d\'efini un homomorphisme 
$$H^1(G, \Pic(\Gamma_{F'})) \to H^3_{nr}(F(X)/\tilde{X}, 2)/H^3(F,2).$$

Montrons que cet homomorphisme est surjectif.
Soit  $\beta \in H^3(F(\tilde{X}),2)$
 une classe non ramifi\'ee sur $\tilde{X}$.
Ses r\'esidus dans
$$ \oplus_{\xi} H^2(F(\xi_{1}),1) \oplus  H^2(F(\eta_{1}),1)$$
sont nuls.
Comme la classe $\beta$
est dans le groupe $H^3_{nr}(F(\tilde{X})/F(\A^2),2)$, 
elle est par la proposition  \ref{KRSu}\footnote{C'est ici que nous utilisons les coefficients $\Q/\Z(2)$
plut\^{o}t que $\Z/2$.}
  l'image d'une classe
$\alpha\in H^3(F(\A^2),2)$ par la fl\`eche 
$H^3(F(\A^2),2) \to H^3(F(\tilde{X}),2).$

Par la commutativit\'e du diagramme, le r\'esidu de $\alpha$ en un point $\xi$
est soit $0$ soit $(-1,g_{\xi})$. 
   Soit $P(u,z) \in F[u,z]$ le polyn\^{o}me sans facteur carr\'e d\'efinissant les 
   points de codimension 1 de $\A^2_{F}$ autres que $\eta$
   o\`u $\alpha$ a un r\'esidu non nul. Alors $$\alpha_{0} :=
   \alpha - (-1,g(u,z),P(u,z))  \in H^3(F(\A^2), \Z/2)$$ 
   a tous ses r\'esidus nuls aux points de codimension 1 de $\A^2_{F}$ autres que $\eta$.
   Comme on a $(-1,g(u,z))=0 \in H^2(F(\tilde{X}),1)$, l'image de $\alpha_{0}$
   dans $H^3(F(\tilde{X}),2)$ est $\beta$.

   Le r\'esidu de $\alpha_{0}$ en $\eta$ est dans le noyau de
   $H^2(F(\eta),1) \to H^2(F'(\eta),1)$, donc de la forme $(-1,h)$ avec
   $h\in F(\eta)$.
   Du complexe de Bloch-Ogus pour $\A^2_{F}$ r\'esulte que
le r\'esidu   $\partial_{\Gamma_{F} }(\alpha_{0})\in F(\Gamma_{1})$ 
  a tous ses r\'esidus $\partial_{m}\partial_{\Gamma_{F} }(\alpha_{0})$ nuls aux points
 ferm\'es $m$ de $\Gamma_{1,F}$.  Par la loi de r\'eciprocit\'e sur la courbe projective et lisse $\Gamma$, on a
 $$\sum_{m\in \Gamma_F^{(1)}} \partial_{m}\partial_{\Gamma_{F} }(\alpha_{0})=0.$$
 Comme la courbe   $\Gamma_{F}$   ne diff\`ere de $\Gamma_{1,F}$ que par un unique point $F$-rationnel, on conclut
 que     $\partial_{m}\partial_{\Gamma_{F} }(\alpha_{0})=0$ pour tout point ferm\'e $m \in \Gamma_{F}$.
On a donc  $\partial_{\Gamma_F}(\alpha_{0})  \in \Br(\Gamma_{F})[2]$, et  cet \'el\'ement s'annule par passage de $F$ \`a $F'$.
En conclusion $\beta$ est l'image d'un \'el\'ement $\alpha_{0} \in H^3(F(\A^2), 2)$ dont tous les r\'esidus sur $\A^2_{F}$ en un point autre que $\eta$ 
sont nuls et dont le r\'esidu en $\eta$ est une classe 
 $$\rho \in \Ker[\Br(\Gamma_{F}) \to \Br(\Gamma_{F'})].$$
 (Nous utilisons ici tacitement l'injectivit\'e du groupe de Brauer d'une vari\'et\'e lisse int\`egre dans le groupe de Brauer de 
 son corps des fonctions.)
 L'image de cette classe $\rho$ par l'homorphisme d\'efini ci-dessus est  la classe
 de $ \beta$ dans $H^3(F(\tilde{X}),2)/H^3(F,2).$
Ainsi  l'homorphisme
 $$H^1(G, \Pic(\Gamma_{F'})) \to H^3_{nr}(F(X)/\tilde{X}, 2)/H^3(F,2)$$
 est surjectif. 
 
 Comme on a l'inclusion
 $H^3_{nr}(F(X)/F,2) \subset H^3_{nr}(F(X)/\tilde{X},2)$,
 ceci \'etablit l'\'enonc\'e.
\end{proof}

\begin{rmk}
On doit pouvoir montrer que  l'inclusion
$$H^3_{nr}(F(X)/F,2) \subset H^3_{nr}(F(X)/\tilde{X},2)$$
est une \'egalit\'e, et donc que
le groupe $H^3_{nr}(F(X)/F, \Q/\Z(2))/H^3(F,\Q/\Z(2))$ est un quotient du
groupe  $ H^1(G,\Pic(\Gamma_{F'}))$, mais nous n'aurons pas besoin de cette pr\'ecision.
\end{rmk}

\begin{rmk}
La m\'ethode de la d\'emonstration du th\'eor\`eme \ref{BOH4}  n'est pas nouvelle.
Elle a \'et\'e classiquement appliqu\'ee \`a l'\'etude du groupe de Brauer non ramifi\'e
d'un espace total de fibration en quadriques. Elle a \'et\'e appliqu\'ee \`a l'\'etude
du troisi\`eme groupe de cohomologie non ramifi\'ee de telles fibrations
dans  \cite{PS16} et dans \cite[\S 5]{ACTP}.
\end{rmk}

En sp\'ecialisant le r\'esultat au cas $F=\R$, nous obtenons
une d\'emonstration alternative du th\'eor\`eme \ref{principal} :
\begin{cor}
Dans la situation  \ref{Situation 2},
 on a un isomorphisme $$\Z/2=H^3(\R,\Z/2) \simeq H^3_{nr}(\R(X)/\R, \Z/2)
= H^3_{nr}(\R(X)/\R, \Q/\Z(2)).$$
\end{cor}
\begin{proof} 
On a un isomorphisme
$$ H^1(G,\Pic(\Gamma_{\C}))= H^1(G, J_{\Gamma}(\C))$$
et ce groupe est nul car $\Gamma(\R)$ et donc $J_{\Gamma}(\R)$
sont connexes (proposition \ref{ww}). On applique le th\'eor\`eme \ref{BOH4}
avec $F=\R$.
\end{proof}

Nous nous int\'eressons maintenant au cas $F=\R(Y)$, o\`u $Y$
est une
 $\R$-vari\'et\'e projective,
  lisse, g\'eom\'etriquement int\`egre, munie d'un $\R$-point $m$.
Soit $\Gamma/\R$ la courbe  projective et lisse d'\'equation affine  $ z^2=u.p(u).$
 Soit  $F=\R(Y)$ et $F'=\C(Y)$. Soit $G=\Z/2=\{1,\sigma\}$ le groupe de Galois  $\Gal(\C/\R)$.
On pointe la courbe 
$\Gamma$ 
 par le point \`a l'infini (qui est r\'eel) du
rev\^etement double $\Gamma \to \P^1$ 
d\'efini par $u$. On note $J_{\Gamma}$, resp. $J_{Y}$,
la vari\'et\'e d'Albanese de $\Gamma$, resp. de $Y$.
On consid\`ere  les morphismes canoniques
 $ \Gamma \to J_{\Gamma}$, et $Y \to J_{Y}$
envoyant les points marqu\'es sur les points $0$ des vari\'et\'es d'Albanese.
 
Comme $Y$ est une vari\'et\'e lisse et $J_{\Gamma}$ est une vari\'et\'e ab\'elienne,
on a \cite[\S II,  no. 15, Th\'eor\`eme 6]{W48}
$$ J_{\Gamma}(F') =  Mor(Y_{\C}, J_{\Gamma}),$$
o\`u $Mor$ d\'esigne les morphismes de vari\'et\'es.
Via le choix des points marqu\'es, on a 
$$Mor(Y_{\C}, J_{\Gamma,\C}   )
 =  J_{\Gamma}(\C) \oplus Mor_{\bullet}(Y_{\C}, J_{\Gamma,\C}).$$

Le morphisme point\'e
  $Y \to  J_{Y}$ induit un isomorphisme $G$-\'equivariant
$$Mor_{\bullet}(Y_{\C}, J_{\Gamma,\C}) =  Hom_{varab}(J_{Y_{\C}}, J_{\Gamma,\C}).$$

On obtient donc un isomorphisme de $G$-modules
$$  J_{\Gamma}(F')  = J_{\Gamma}(\C) \oplus Hom_{varab}(J_{Y,\C}, 
J_{\Gamma,\C}).$$

Comme $\Gamma(\R)$ est connexe non vide, on
 a $H^1(G,J_{\Gamma}(\C))=0$ (proposition \ref{ww}).
 En utilisant le lemme \ref{Hm1J}, on obtient  
 \begin{equation}\label{h1picvshom}
 H^1(G, \Pic(\Gamma_{F'})) = H^1(G, Hom_{varab}(J_{Y,\C}    , J_{\Gamma,\C})).
 \end{equation}

\begin{thm}\label{finitude}
 Soit $X$ comme dans la situation \ref{Situation 2}.
Soit $Y$
une $\R$-vari\'et\'e projective, lisse, g\'eom\'etriquement int\`egre, avec $Y(\R) \neq \emptyset$.
Soit $F =\R(Y)$. Soit $W=X \times_{\R} Y$.
Si  $H^3_{nr}(\R(Y)/\R, \Q/\Z(2))$ est  fini,
alors le  groupe $H^3_{nr}(\R(W)/\R,\Q/\Z(2))$ est   fini.
 \end{thm}

\begin{proof} Le groupe ab\'elien $Hom_{varab}(J_{Y,\C}    , J_{\Gamma,\C})$
est un r\'eseau. Ainsi le groupe $H^1(G, \Pic(\Gamma_{F'}))$ est fini.  
Le th\'eor\`eme \ref{BOH4} donne que le quotient
$$H^3_{nr}(\R(W)/\R(Y), \Q/\Z(2))/ H^3(\R(Y), \Q/\Z(2))$$
est fini.
Comme on a $X(\R) \neq \emptyset$, la projection $W \to Y$ admet une section.
Ainsi d'une part l'application
$H^3(\R(Y),\Q/\Z(2)) \to H^3(\R(W),\Q/\Z(2)) $ 
est injective, d'autre part, par fonctorialit\'e contravariante de la cohomologie non ramifi\'ee
pour les morphismes de vari\'et\'es lisses,
 toute classe dans $$H^3_{nr}(\R(W)/\R,\Q/\Z(2)) \subset H^3(\R(W),\Q/\Z(2))$$
qui est dans l'image de $H^3(\R(Y),\Q/\Z(2))$ appartient \`a $H^3_{nr}(\R(Y)/\R,\Q/\Z(2)).$ 
Ce dernier groupe est  fini par hypoth\`ese.
\end{proof}

 \begin{rmk}\label{remfinitude}
Le groupe 
$H^3_{nr}(\R(Y)/\R, \Q/\Z(2))$ est fini pour  toute vari\'et\'e $Y$ projective, lisse,
g\'eom\'etriquement connexe de dimension au plus 2.  
 Il est en effet  alors annul\'e par $2$
car $H^3(\C(Y), \Q/\Z(2))=0$. Il s'identifie donc avec
$H^3_{nr}(\R(Y)/\R, \Z/2)$ par la proposition \ref{lemmaMerk}.
Et pour $Y$ de dimension au plus 2 ce dernier groupe est isomorphe \`a $(\Z/2)^s$, o\`u $s$
est le nombre de composantes connexes de $Y(\R)$ \cite{CTP}.
\end{rmk}

Revenons maintenant \`a la situation \ref{Situation 2}. Soit $\tilde{X} \to \A^2_{\R}$ comme ci-dessus.
On consid\`ere la courbe projective et lisse $\Delta$  d'\'equation affine $w^2=v.p(-v)$.
Soit $F=\R(\Delta)$.
Soit $g(u,z)=u.p(u)-z^2$. Soit
$\Gamma$ la $\R$-courbe projective et lisse d'\'equation affine
$g(u,z)=0$ dans $\A^2_{\R}$. 

Les r\'esidus de $(-1,u+v)$ sur la courbe $\Gamma_{F}$ sont tous
nuls : il suffit de le v\'erifier sur la courbe $\Gamma_{1,F}$ qui diff\`ere de $\Gamma_{F}$ par
un point rationnel. En $u=-v$ on a $-v.p(-v)-z^2=0$ et $v.p(-v)=w^2$, dans le corps r\'esiduel,
donc $-1$ est un carr\'e. On a donc $(-1,u+v) \in \Br(\Gamma_{F})$.
En le point $u=z=0$ de $\Gamma_{F} $, la classe $(-1,u+v)  \in \Br(\Gamma_{F})$
 prend la valeur $(-1, v) = (-1, w^2/p(-v))$, et comme $p(-v)$
 est une somme de deux carr\'es, $(-1,v)=0$.
\begin{thm}\label{si-1u+v}
 Dans la situation 
 \ref{Situation 2}, 
 notons $F=\R(\Delta)$.
  Si la classe  $(-1,u+v) \in \Br(F(\Gamma))/\Br(F)$
 est nulle, alors $$(u+v,-1,-1)=0 \in H^3(F(X),\Z/2)$$ et la $\R$-vari\'et\'e $X$
 est universellement $CH_{0}$-triviale.
\end{thm}

\begin{proof} 
Sous l'hypoth\`ese faite, les pr\'eliminaires impliquent
$(-1,u+v) = 0 \in \Br(\Gamma_{F})$.  
Soit comme ci-dessus $g(u,z)=u.p(u)-z^2$.
 Le cup-produit $$(-1,u+v,-g(u,z))=(-1,u+v, z^2-u.p(u))  \in 
H^3(F(\A^2), \Z/2) \subset H^3(F(\A^2), \Q/\Z(2))$$
a  alors tous ses r\'esidus nuls sur $\A^2_{F} $.
La suite de Bloch-Ogus donne que cette classe est constante.
Par \'evaluation  en tout point de $\A^2(F)$ avec $u=0$  et $z\neq 0$,
on a $(-1,u+v,-g(u,z))=0 \in H^3(F(\A^2), \Z/2)$.
L'image de $(-1,u+v,-g(u,z))= (-1,u+v, z^2-u.p(u))$ dans
 $H^3(F(X),\Z/2)$ est   
$$(-1,u+v, -x^2-y^2) = (-1,u+v,-1).$$
On conclut que $ (-1,u+v,-1)= 0 \in H^3(F(X),\Z/2)$. 
D'apr\`es le th\'eor\`eme \ref{equivalencesR},
cette derni\`ere condition est \'equivalente \`a la $CH_{0}$-trivialit\'e universelle de $X$.
\end{proof}

 \section{Cas sans multiplication complexe}\label{sansMC}

On garde les notations de la situation \ref{Situation 3}.
Ainsi  $Y=\Delta$
est la courbe projective et lisse  
d'\'equation affine $ w^2=v.p(-v)$
point\'ee par le point \`a l'infini (qui est r\'eel) 
du rev\^etement double de $\Delta \to \P^1$  donn\'e par $v$.
On a $F=\R(\Delta)$ et $F'=\C(\Delta)$.
La courbe $\Gamma/\R$ est la courbe  projective et lisse d'\'equation affine  $$ \Gamma_{1}: z^2=u.p(u).$$

\begin{prop}\label{h1pich}
Avec les notations ci-dessus, supposons que l'anneau des endomorphismes de $ J_{\Gamma,\C}$ est r\'eduit \`a $\Z$. Alors 
\begin{itemize}
\item [(i)]le groupe $\hat{H}^{-1}
(G,\Pic(\Gamma_{F'}))$ est isomorphe \`a $\Z/2$, engendr\'e par l'image de la fonction $u+v$ via la suite exacte (\ref{s2});
\item [(ii)]
le groupe
$\Ker[\Br(\Gamma_{F}) \to \Br(\Gamma_{F'}) ]$ est engendr\'e par $$\mbox{le groupe }\Ker[\Br(F)  \to \Br(F')]\mbox{ 
et la classe }(u+v,-1).$$
\end{itemize}
\end{prop}

\begin{proof}
D'apr\`es (\ref{h1picvshom}), on a
 $$H^1(G, \Pic(\Gamma_{F'})) = H^1(G, Hom_{varab}(J_{\Delta,\C}    , J_{\Gamma,\C})).$$

Sur le corps des complexes, on a l'isomorphisme de courbes point\'ees
$\Delta_{\C}  \simeq \Gamma_{\C}$ donn\'e par
$$ \theta : (v,w) \mapsto (-v,iw),$$
qui induit un isomorphisme de jacobiennes
$$ \phi_{\theta}: J_{\Delta,\C}   \simeq J_{\Gamma,\C}.$$

Puisque $\phi_{\theta}$ est un isomorphisme, on a donc un isomorphisme de groupes ab\'eliens abstraits:
$$\Z\simeq Hom_{varab}(  J_{\Gamma,\C},  
J_{\Gamma,\C})
\simeq  
Hom_{varab}
(J_{\Delta,\C},
  J_{\Gamma,\C})    $$
o\`u la premi\`ere fl\`eche envoie le g\'en\'erateur $1\in \mathbb Z$ sur l'application identit\'e dans 
$Hom_{varab}(  J_{\Gamma,\C},   J_{\Gamma,\C})$. 
 La deuxi\`eme fl\`eche envoie 
 $\rho\in Hom_{varab}(  J_{\Gamma,\C},   J_{\Gamma,\C})$ sur  $\rho\circ\phi_{\theta}.$

On a donc 
$$Hom_{varab}(J_{\Delta,\C},  J_{\Gamma,\C}) \simeq \Z\phi_{\theta}$$ avec l'action induite par l'action de $\theta$ sur les points.
Comme $$\sigma(\theta) : (v,w) \mapsto (-v,-iw)=-\theta,$$ 
 l'action de $G$ sur $Hom_{varab}(J_{\Delta,\C},  J_{\Gamma,\C}) \simeq \Z\phi_{\theta}$
est antipodale.
 Alors 
 $$\hat{H}^{-1}
(G,\Pic(\Gamma_{F'}))=H^1(G, \Pic(\Gamma_{F'})) = H^1(G, Hom_{varab}(J_{\Delta,\C}    , J_{\Gamma,\C})) =   \Z/2$$
(voir le lemme \ref{remG}).
 
 Via la suite exacte (\ref{s2}), on obtient que le g\'en\'erateur de $\hat{H}^{-1}(G, \Pic(\Gamma_{F'}))$ est donn\'e 
 par la classe de la fonction rationnelle $u+v$ dans $F(\Gamma)^{\times}/N(F'(\Gamma)^{\times})$. En effet, soit $$\theta(\eta)=\theta(v,w)=(-v,iw)\in \Gamma(F').$$ Alors 
 $\sigma(\theta(v,w))=(-v,-iw)$ o\`u $\sigma$ est la conjugaison complexe. Le diviseur de $u+v$
sur $\Gamma_{F'}$  est donn\'e par
$$\theta+\sigma(\theta)-2O=(\theta-O)+\sigma(\theta-O),$$ o\`u $O$ est le point \`a l'infini du rev\^etement double
 $\Gamma\to \mathbb P^1$. On en d\'eduit que le diviseur de la fonction $u+v$ est une norme.
Par construction, $u+v$ s'envoie donc sur le 
  g\'en\'erateur de $\hat{H}^{-1}
(G,\Pic(\Gamma_{F'}))$. 
 \end{proof}

Dans la situation du th\'eor\`eme \ref{equivalencesR}, avec 
 $F=\R(\Delta)$,  si la classe $(u+v,-1,-1)\in H^3(F(X), \Z/2)$ est nulle, alors 
 le groupe quotient $H^3_{nr}(F(X)/F, \Z/2)/H^3(F, \Z/2)$ est nul 
 et la vari\'et\'e $X$ est universellement $CH_{0}$-triviale.

 Lorsque la courbe $\Gamma$ n'a pas de multiplication complexe,
et $F=\R(\Delta)$,
on peut 
montrer que ce quotient $H^3_{nr}(F(X)/F, \Z/2)/H^3(F, \Z/2)$
est
engendr\'e par la classe $(u+v,-1,-1)$, et donc  est d'ordre au plus 2 :
 \begin{thm}\label{uniquegenerateur}
 Soient $p(u) \in \R[u], X, \Delta, \Gamma$ comme dans la situation \ref{Situation 3}.
   Soit $F=\R(\Delta)$.
    Supposons que  l'anneau des endomorphismes sur $\,\C$ de la jacobienne $ J_{\Gamma}$ de $\Gamma$  
    est r\'eduit \`a $\Z$: 
    $$Hom_{varab}(J_{\Gamma,\C},J_{\Gamma,\C})\simeq \mathbb Z.$$
   Alors le groupe $$H^3_{nr}(F(X)/F, \Z/2)/H^3(F, \Z/2)$$ 
    est  d'ordre au plus 2 et 
   est  engendr\'e par la classe $(u+v, -1, -1)$.
  \end{thm}
  
\begin{proof}
Sous l'hypoth\`ese que  l'anneau des endomorphismes de  $ J_{\Gamma}$ sur $\C$
est r\'eduit \`a $\Z$,  la proposition \ref{h1pich} assure que $$ \Ker[\Br(\Gamma_{F}) \to \Br(\Gamma_{F'})]/\Ker[\Br(F) \to \Br(F')]=\Z/2$$
est engendr\'e par  la classe de $(-1,u+v)$. 
Les th\'eor\`emes 
\ref{BOH4}
et 
\ref{si-1u+v}
 et leur d\'emonstration donnent une surjection
$$\Ker[\Br(\Gamma_{F}) \to \Br(\Gamma_{F'})]/\Ker[\Br(F) \to \Br(F')]  \twoheadrightarrow  H^3_{nr}(F(X)/\tilde{X}, \Z/2)/H^3(F, \Z/2)$$
qui envoie la classe de  $(-1,u+v) \in \Br(\Gamma_{F})$ sur la classe de $(-1,u+v,-1) \in H^3(F(X), \Z/2)$.
D'apr\`es le th\'eor\`eme  \ref{criterenullitecycle}, la classe  
 $(u+v, -1, -1) \in H^3(F(X),\Z/2)$  appartient \`a $H^3_{nr}(F(X)/F,\Z/2)$.
\end{proof}

\begin{rmk}  Soient $X$, $\Delta$  et $F=\R(\Delta)$  comme dans la situation \ref{Situation 2}. Soit $p(u)= u^2+au+b \in \R[u]$ avec $b > a^2/3$. 
Supposons que la courbe elliptique $\Gamma$ d'\'equation affine $z^2=u.p(u)$ n'a pas de multiplication complexe. 
Comme les invariants $j$ des courbes elliptiques avec multiplication complexe sont d\'enombrables, 
il y a de  nombreux tels exemples (pour le lien entre $\{a,b\}$ et l'invariant $j(\Gamma)$, voir le d\'ebut de la section 
\ref{scomp}).  On est donc dans la situation du th\'eor\`eme \ref{uniquegenerateur} ci-dessus.
L'hypoth\`ese sur $a$ et $b$ et le  th\'eor\`eme \ref{ex2}  donnent   $(-1,-1,u+v)=0 \in H^3(F(X),\Z/2)$.
La  proposition 8.1 donne $H^1(G,\Pic(\Gamma_{F'}))=\Z/2$ et  identifie le g\'en\'erateur.
Par les calculs dans la d\'emonstration des th\'eor\`emes \ref{BOH4}  et \ref{si-1u+v},
l'image de ce g\'en\'erateur dans $H^3(F(X),\Z/2)/H^3(F,\Z/2)$ est
$(-1,-1, u+v)$.  Pour tout tel exemple, l'application
$$ H^1(G,\Pic(\Gamma_{F'})) \to H^3_{nr}(F(X)/\tilde{X},2)/H^3(F,2)$$
du th\'eor\`eme \ref{BOH4} a un noyau non trivial. 
\end{rmk}

   L'hypoth\`ese
   de la proposition \ref{h1pich} et du th\'eor\`eme \ref{uniquegenerateur}
   est r\'ealis\'ee pour les courbes elliptiques (cas $deg(p)=2$) sans multiplication complexe. 
   Cette condition est aussi souvent r\'ealis\'ee pour les courbes hyperelliptiques (cas $deg(p)\geq 4$). En effet, on
  a  le th\'eor\`eme suivant \cite[Thm. 1.3]{Z00} :
   
  \begin{thm} (Zarhin)
Soit $K$ un corps de caract\'eristique z\'ero.
Soit $f(u) \in K[u]$ un polyn\^ome s\'eparable
irr\'eductible de degr\'e $n$  au moins \'egal \`a 5.
Si le groupe de Galois sur $K$ de l'\'equation $f(u)=0$
est le groupe sym\'etrique $S_{n}$,  alors
sur tout corps alg\'ebriquement clos contenant $K$,
 l'anneau des endomorphismes
de la jacobienne de la courbe  hyperelliptique
$$ y^2=f(u)$$ est r\'eduit \`a $\Z$.
\end{thm}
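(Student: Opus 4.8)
Pour démontrer ce théorème, je me ramènerais d'abord au cas où le corps de base est une clôture algébrique $\cl K$ de $K$ : comme on est en caractéristique nulle, l'anneau des endomorphismes de la jacobienne $J$ ne change pas par extension de corps algébriquement clos, et il suffit donc d'établir $\on{End}_{\cl K}(J)=\Z$. Posons $\Gamma=\Gal(\cl K/K)$ et $g=\dim J=\lfloor(n-1)/2\rfloor$. L'idée directrice est d'étudier l'action de $\Gamma$ sur le $\F_2$-espace vectoriel $V=J[2]$ des points de $2$-torsion, qui est de dimension $2g$ et dont l'action se factorise par $\Gal(f/K)\simeq S_n$, agissant par permutation sur l'ensemble $\mathfrak R$ des $n$ racines de $f$. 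La description classique de $V$ via les points de Weierstrass identifie $V$, comme $\F_2[S_n]$-module, au c\oe ur $W$ du module de permutation $\F_2^{\mathfrak R}$ : le sous-espace des vecteurs de somme nulle si $n$ est impair, son quotient par la droite des vecteurs constants si $n$ est pair.

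Je considérerais ensuite la réduction modulo $2$, c'est-à-dire l'application $\on{End}_{\cl K}(J)\to\on{End}_{\F_2}(V)$. Son noyau est $2\,\on{End}_{\cl K}(J)$, car un endomorphisme annulant $J[2]=\Ker[2]$ se factorise par la multiplication par $2$ ; on obtient ainsi un plongement $\on{End}_{\cl K}(J)/2\hookrightarrow\on{End}_{\F_2}(V)$, dont je note $R$ l'image. Les endomorphismes étant définis sur $\cl K$, le groupe $\Gamma$ agit par conjugaison sur $\on{End}_{\cl K}(J)$ de façon compatible à la réduction ; la sous-$\F_2$-algèbre $R$, qui contient l'identité $\on{id}_V$, est donc stable par conjugaison par l'image de $\Gamma$ dans $\on{GL}(V)$, c'est-à-dire par $S_n$.

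Le c\oe ur de la preuve, et son principal obstacle, serait d'établir que pour $n\geq 5$ le $\F_2[S_n]$-module $W$ est \emph{très simple} : toute sous-$\F_2$-algèbre de $\on{End}_{\F_2}(W)$ contenant l'identité et stable par conjugaison par $S_n$ est réduite à $\F_2\cdot\on{id}$ ou égale à $\on{End}_{\F_2}(W)$ tout entier. Je montrerais d'abord l'absolue simplicité de $W$, conséquence de la double transitivité de l'action de $S_n$ sur $\mathfrak R$, puis la très-simplicité au moyen des critères de représentations très simples, en exploitant la structure des stabilisateurs de points et de paires de points dans $S_n$. C'est dans cette partie de théorie des représentations modulaires de $S_n$ que se concentrent les difficultés, et c'est là qu'interviennent les cas exceptionnels en petit degré, écartés par l'hypothèse $n\geq 5$.

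Il resterait à conclure en appliquant la très-simplicité à l'algèbre $R$ : ou bien $R=\on{End}_{\F_2}(V)$, ou bien $R=\F_2\cdot\on{id}$. Le premier cas est impossible par un argument de dimension : en caractéristique nulle on a $\on{rang}_{\Z}\on{End}_{\cl K}(J)\leq 2g^2$, alors que $\dim_{\F_2}\on{End}_{\F_2}(V)=(2g)^2=4g^2>2g^2$ pour $g\geq 1$ (et ici $g\geq 2$), tandis que $\dim_{\F_2}R\leq\dim_{\F_2}\on{End}_{\cl K}(J)/2=\on{rang}_{\Z}\on{End}_{\cl K}(J)$. On a donc $R=\F_2\cdot\on{id}$, d'où $\dim_{\F_2}\on{End}_{\cl K}(J)/2=1$ ; comme $\on{End}_{\cl K}(J)$ est un $\Z$-module libre de type fini contenant $\Z$, il vient $\on{End}_{\cl K}(J)=\Z$.
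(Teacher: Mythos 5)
La preuve de ce théorème ne figure pas dans l'article : les auteurs le citent tel quel depuis \cite[Thm. 1.3]{Z00}. Votre tentative est donc à comparer à la démonstration originale de Zarhin, dont elle reprend très fidèlement la stratégie. Les étapes périphériques de votre texte sont toutes correctes : l'identification de $J[2]$ avec le c\oe ur $W$ du module de permutation $\F_2^{\mathfrak R}$, l'injectivité de la réduction $\on{End}_{\cl K}(J)/2 \hookrightarrow \on{End}_{\F_2}(J[2])$ (un endomorphisme tuant $J[2]$ se factorise par $[2]$), la stabilité de l'image $R$ sous la conjugaison par l'image $S_n$ de Galois, et l'argument final de dimension, $\on{rang}_{\Z}\on{End}_{\cl K}(J)\leq 2g^2 < 4g^2 = \dim_{\F_2}\on{End}_{\F_2}(J[2])$, la borne $2g^2$ étant exactement l'endroit où la caractéristique nulle intervient (via la décomposition de Hodge de $H_1$).

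Le probl\`eme est que l'étape que vous désignez vous-même comme le c\oe ur de la preuve n'est pas démontrée : la \emph{très-simplicité} du $\F_2[S_n]$-module $W$ pour $n\geq 5$, c'est-à-dire le fait que toute sous-algèbre unitaire de $\on{End}_{\F_2}(W)$ stable par conjugaison sous $S_n$ soit $\F_2\cdot\on{id}$ ou $\on{End}_{\F_2}(W)$ tout entier. Renvoyer à des \emph{critères de représentations très simples} est circulaire : ces critères sont précisément la machinerie que Zarhin a développée pour établir ce théorème, et leur mise en \oe uvre (théorie de Clifford en caractéristique $2$ pour les sous-groupes normaux, exclusion des décompositions imprimitives et tensorielles, récurrence sur $n$ via les stabilisateurs) occupe l'essentiel de ses articles sur le sujet. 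L'absolue simplicité de $W$, qui découle bien de la $2$-transitivité (Mortimer, théorie de James des $D^\lambda$), est très loin de suffire : un module absolument simple n'est en général pas très simple (le c\oe ur pour $n=4$ en est déjà un exemple, où $\F_4\subset\on{End}_{\F_2}(W)$ est une sous-algèbre stable intermédiaire). En l'état, votre texte est donc une réduction correcte du théorème à ce résultat non trivial de théorie des représentations modulaires de $S_n$, et non une démonstration.
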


On en d\'eduit:

\begin{cor}
En tout degr\'e $d\geq 2$ tout  polyn\^{o}me r\'eel de degr\'e $2d+1$
de la forme $u.h(u)$ avec $h(u)$ unitaire 
de degr\'e $2d$
strictement
 positif 
 est limite de
tels polyn\^{o}mes  $h'$ avec la propri\'et\'e que
  l'anneau  des endomorphismes complexes
de la jacobienne de la courbe  hyperelliptique
$$ y^2=u.h'(u)$$ est r\'eduit \`a $\Z$.
\end{cor}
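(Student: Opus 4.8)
Le plan est de se ramener au théorème de Zarhin ci-dessus par un argument de modules. Plus précisément, je voudrais montrer que, arbitrairement près de $h$, il existe un polynôme $h'$ unitaire positif de degré $2d$ tel que la courbe hyperelliptique $y^2=u\,h'(u)$ soit isomorphe, sur $\C$, à une courbe $y^2=g(x)$ avec $g\in\Q[x]$ séparable irréductible de degré $n=2d+1\ (\geq 5)$ et de groupe de Galois $S_n$. Comme l'anneau des endomorphismes géométriques ne dépend que de la classe d'isomorphisme sur $\C$ de la jacobienne, le théorème de Zarhin (appliqué avec $K=\Q$ et le corps algébriquement clos $\C$) donnera alors $End_{\C}(\mathrm{Jac}(y^2=u\,h'(u)))=End_{\C}(\mathrm{Jac}(y^2=g))=\Z$.

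L'ingrédient géométrique est que la famille des courbes $y^2=u\,h'(u)$, pour $h'$ unitaire de degré $2d$, domine l'espace des modules $\mathcal H_d$ des courbes hyperelliptiques de genre $d$. En effet, les points de branchement d'une telle courbe sont $\{0,\infty\}$ et les $2d$ racines de $h'$; réciproquement, toute configuration de $2d+2$ points de $\P^1$ se transporte, par un élément de $PGL_2$, sur une configuration plaçant deux points en $0$ et $\infty$, les $2d$ restants étant les racines d'un polynôme de degré $2d$ que l'on rend unitaire. Le décompte des paramètres ($2d$ coefficients, moins la dimension de l'action $u\mapsto\lambda u$ qui préserve la forme de l'équation) donne $2d-1=\dim\mathcal H_d$, d'où la dominance du morphisme associé.

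J'invoquerais ensuite le théorème d'irréductibilité de Hilbert: les polynômes réels séparables de degré $n$ de groupe de Galois $S_n$ sont denses, et le morphisme $f\mapsto[y^2=f]$ vers $\mathcal H_d$ est lui aussi dominant; par le théorème de Zarhin, les points de $\mathcal H_d$ ainsi atteints ont $End_{\C}=\Z$. Le lieu $\mathcal H_d^{\circ}$ des classes vérifiant $End_{\C}=\Z$ est donc dense, et son complémentaire est contenu dans une réunion dénombrable de sous-variétés fermées propres (les lieux à endomorphismes supplémentaires). En tirant celles-ci en arrière par le morphisme dominant du paragraphe précédent, l'ensemble des $h'$ vérifiant $End_{\C}(\mathrm{Jac}(y^2=u\,h'(u)))=\Z$ apparaît comme le complémentaire, dans l'espace des coefficients, d'une réunion dénombrable de fermés propres: il est donc dense. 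Comme la positivité de $h'$ est une condition ouverte satisfaite par $h$, on dispose ainsi de polynômes $h'$ unitaires positifs convenables arbitrairement proches de $h$.

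La principale difficulté sera de justifier rigoureusement que ``dense dans $\mathcal H_d$'' entraîne ``dense dans l'espace réel des coefficients de $h'$''. Il faudra vérifier que chaque sous-variété propre $Z_i\subset\mathcal H_d$ se tire en arrière en un fermé propre du lieu réel $U$ des $h'$ admissibles: cela résulte de la dominance, car un tel fermé ne saurait contenir $U$ sans contenir sa clôture de Zariski, égale à l'espace ambiant, ce qui contredirait la propreté de $Z_i$; on conclut alors par un argument de catégorie de Baire, ou de mesure nulle, sur la réunion dénombrable. Il restera à s'assurer que la contrainte de positivité (réalité avec racines non réelles) laisse assez de liberté près de $h$, ce qui est garanti par son caractère ouvert joint à la dominance au-dessus d'un voisinage de la classe de $y^2=u\,h(u)$.
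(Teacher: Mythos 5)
Votre d\'emonstration est correcte pour l'essentiel, mais elle suit une route r\'eellement diff\'erente de celle de l'article, et la comparaison est instructive. L'article proc\`ede de fa\c{c}on directe et \'el\'ementaire : par le th\'eor\`eme d'irr\'eductibilit\'e de Hilbert \emph{avec approximation} (Ekedahl), on approche $u\,h(u)$ par un polyn\^ome unitaire \emph{irr\'eductible} $r(u)\in\Q[u]$ de degr\'e $n=2d+1$ et de groupe de Galois absolu $S_n$, auquel le th\'eor\`eme de Zarhin s'applique ; le lemme de Krasner et le th\'eor\`eme des fonctions implicites donnent alors $r(u)=(u-a)\,h'(u)$ avec $a\in\R$ proche de $0$ et $h'$ unitaire positif, et la translation $u\mapsto u+a$ remet la courbe sous la forme voulue $y^2=u\,h''(u)$ avec $h''(u)=h'(u+a)$. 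Ce d\'etour est impos\'e par le fait que $u\,h'(u)$ est toujours r\'eductible, de sorte que le th\'eor\`eme de Zarhin ne s'applique jamais directement \`a un point de votre famille ; votre argument r\`egle ce m\^eme point autrement, via $PGL_2$ et les points de branchement (toute courbe hyperelliptique complexe est isomorphe \`a une courbe $y^2=u\,h'(u)$). En retour, votre preuve n'a besoin que de l'existence d'un polyn\^ome de groupe $S_n$ (le th\'eor\`eme de Hilbert ordinaire suffit, la version avec approximation devient superflue), et elle donne plus que l'\'enonc\'e : les bons $h'$ forment le compl\'ementaire d'une r\'eunion d\'enombrable de ferm\'es d'int\'erieur vide, donc un ensemble g\'en\'erique au sens de Baire et de mesure pleine, pas seulement dense. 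Mais elle repose sur un fait non \'el\'ementaire, que vous affirmez sans r\'ef\'erence et qui porte tout l'argument : dans une famille de vari\'et\'es ab\'eliennes sur une base de type fini sur $\C$, le lieu des points o\`u l'anneau des endomorphismes d\'epasse $\Z$ est une r\'eunion d\'enombrable de sous-ensembles alg\'ebriques ferm\'es. C'est vrai (repr\'esentabilit\'e des sch\'emas de Hom relatifs des sch\'emas ab\'eliens, qui sont des r\'eunions d\'enombrables de sch\'emas finis sur la base, ou th\'eorie des lieux de Hodge), mais cela doit \^etre cit\'e ou d\'emontr\'e. Deux retouches enfin : l'expression \emph{polyn\^omes r\'eels de groupe de Galois $S_n$} n'a pas de sens, il s'agit des polyn\^omes rationnels de groupe de Galois absolu $S_n$, denses parmi les polyn\^omes r\'eels (c'est pr\'ecis\'ement le th\'eor\`eme avec approximation que cite l'article) ; et $\mathcal{H}_d$ n'\'etant qu'un espace de modules grossier, sans famille universelle, il est plus s\^ur de d\'efinir les lieux de saut directement sur l'espace $U$ des coefficients, o\`u la famille existe, la propret\'e de chacun r\'esultant de l'existence d'un seul point de $U(\C)$ \`a anneau d'endomorphismes $\Z$, fourni par votre argument de points de branchement appliqu\'e \`a une courbe de Zarhin.
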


\begin{proof}

 Soit $n$ un entier, $n\geq 2$.
Partons de l'extension g\'en\'erique  du corps $\Q(z_{1}, \dots, z_{n})$
donn\'ee par l'\'equation
$$x^n + z_{1}x^{n-1} + \dots + z_{n} =0.$$
Le groupe de Galois absolu est $S_{n}$.
Par le th\'eor\`eme d'irr\'eductibilit\'e de Hilbert avec approximation
\cite{Ek},
on peut approcher tout point de $\R^n$ hors du lieu de
ramification par un  point de $\Q^n$ tel que le groupe
de Galois absolu de l'\'equation sp\'ecialis\'ee soit $S_{n}$.

Soit $h(u) \in \R[u]$ un polyn\^ome s\'eparable unitaire
positif sur $\R$, de degr\'e $2d$. 
On peut approcher le polyn\^{o}me $u.h(u) \in \R[u]$, de degr\'e $n=2d+1$,
par un polyn\^{o}me s\'eparable unitaire $r(u) \in \Q[u]$ de degr\'e $n$ de telle mani\`ere  que
le polyn\^{o}me $r(u)$ ait $S_{n}$ pour groupe de Galois absolu. 
  
 Plus pr\'ecis\'ement, si $V \subset \A^n_{\Q}$ est l'ouvert
 form\'e des points o\`u  le polyn\^{o}me 
$$x^n + z_{1}x^{n-1} + \dots + z_{n}$$
est s\'eparable, l'ensemble des points de $V(\Q)$
o\`u le polyn\^ome sp\'ecialis\'e a un groupe de Galois
absolu diff\'erent de $S_{n}$ est un ensemble mince.
Son compl\'ementaire dans $V(\Q) \subset \A_{n}(\Q)$
est en particulier Zariski dense.

Si le polyn\^{o}me unitaire $r(u) \in \Q[u]$ est suffisamment proche de $u.h(u)$, alors
il est  de la forme $(u-a).h'(u)$ avec  $a\in \R$ et $h'(u)$ unitaire positif sur $\R$
(lemme de Krasner, th\'eor\`eme des fonctions implicites).
\end{proof}

\section{Cas elliptique avec multiplication complexe impaire}\label{avecMCimpaire}

\begin{lemma}\label{lemmaemc}
 Soit $p(u)\in \R[u]$ un polyn\^ome s\'eparable unitaire de degr\'e $2$, avec $p(0)\neq 0$, tel que $p$ est
 positif sur $\R$.
    Soit $E\subset \mathbb P^2_{\R}$ la courbe elliptique
   $$ E: z^2=u.p(u)$$
   et soit 
   $\Delta\subset \P^2_{\R}$ la courbe elliptique $$\Delta: w^2=v.p(-v).$$
   Soient $F=\R(\Delta)$ et $F'=\C(\Delta)$. Soit $G=\Gal(\C/\R)=\{1,\sigma\}$.
    Supposons que  la courbe $E$ est \`a multiplication complexe, et que l'on a $$End_{\C}(E) =
\Z[\omega],$$  o\`u $\omega \in \C$ est un entier quadratique imaginaire, satisfaisant une \'equation
$$\omega^2 - d \omega +c =0, \mbox{ o\`u }d,c \in \Z\mbox{ et }d \mbox{ est {\bf impair}}.$$
   Alors 
    $$\hat{H}^{-1}(G,\Pic(E_{F'}))=0.$$
\end{lemma}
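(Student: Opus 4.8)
Le plan est de ramener $\hat{H}^{-1}(G,\Pic(E_{F'}))$ à un calcul de cohomologie de Tate sur un $G$-module explicite déjà traité au lemme \ref{h1mc}. Comme $E$ est une courbe elliptique pointée par son point à l'infini (réel), elle s'identifie à sa jacobienne $J_E$, et le lemme \ref{Hm1J}, appliqué à $E_F$ et à l'extension galoisienne $F'/F$ de groupe $G$, fournit un isomorphisme
$$\hat{H}^{-1}(G,\Pic(E_{F'})) \simeq \hat{H}^{-1}(G, E(F')).$$
Il suffit donc de montrer $\hat{H}^{-1}(G, E(F'))=0$.

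D'abord j'identifierais le $G$-module $E(F')=E(\C(\Delta))$. Comme $E$ est propre, ses points $F'$-rationnels sont les morphismes $\Delta_{\C}\to E_{\C}$, et le choix des points marqués réels donne une décomposition $G$-équivariante
$$E(F') = E(\C) \oplus Hom_{\bullet}(\Delta_{\C}, E_{\C}) = E(\C) \oplus Hom_{varab}(\Delta_{\C}, E_{\C}),$$
la seconde égalité provenant de la propriété d'Albanese, exactement comme au lemme \ref{h1pich}. Comme $p$ est positif, le polynôme $u.p(u)$ n'a que la racine réelle $u=0$, de sorte que $E(\R)$ est connexe non vide ; la proposition \ref{ww}(ii) donne alors $\hat{H}^i(G,E(\C))=0$ pour tout $i$, et par additivité de la cohomologie de Tate il reste à calculer $\hat{H}^{-1}(G,M)$ pour $M=Hom_{varab}(\Delta_{\C}, E_{\C})$.

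L'étape centrale, et le point délicat, est la détermination précise de la structure de $G$-module sur $M$. J'utiliserais l'isomorphisme de courbes pointées $\theta:(v,w)\mapsto (-v,iw)$ de $\Delta_{\C}$ sur $E_{\C}$, qui induit un isomorphisme de variétés abéliennes $\phi_{\theta}:\Delta_{\C}\simeq E_{\C}$, pour obtenir un isomorphisme de groupes $End_{\C}(E)=\Z[\omega]\xrightarrow{\sim} M$, $\rho\mapsto \rho\circ\phi_{\theta}$. La subtilité est que l'action de $\sigma$ combine deux effets : la conjugaison galoisienne sur $End_{\C}(E)$, qui est l'unique automorphisme non trivial $\omega\mapsto d-\omega$ (car, pour une courbe elliptique réelle à multiplication complexe imaginaire, on a $End_{\R}(E)=\Z$), et la torsion issue de l'égalité $\sigma(\phi_{\theta})=-\phi_{\theta}$ déjà observée au lemme \ref{h1pich}. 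En suivant le calcul $\sigma(\rho\circ\phi_{\theta})=\rho^{\sigma}\circ(-\phi_{\theta})=(-\rho^{\sigma})\circ\phi_{\theta}$, on trouve que $\sigma$ agit sur $End_{\C}(E)=\Z\cdot 1\oplus \Z\cdot\omega$ par $\sigma(1)=-1$ et $\sigma(\omega)=\omega-d$ : dans la base $(e_1,e_2)=(1,\omega)$, c'est exactement le $G$-module $M_1$ du lemme \ref{h1mc}.

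Enfin j'invoquerais le lemme \ref{h1mc}(ii) : comme $d$ est impair, $\hat{H}^{-1}(G,M_1)=0$. Joint à $\hat{H}^{-1}(G,E(\C))=0$, ceci donne $\hat{H}^{-1}(G,E(F'))=0$, d'où la conclusion $\hat{H}^{-1}(G,\Pic(E_{F'}))=0$. L'obstacle principal est donc le calcul correct de cette structure de $G$-module, et en particulier de la torsion par $\sigma(\phi_{\theta})=-\phi_{\theta}$, qui transforme le module \emph{naïf} $\Z[\omega]$ (d'action $\sigma(\omega)=d-\omega$) en le module $M_1$. Le lemme \ref{h1mc} est néanmoins conçu pour rendre la conclusion robuste, puisque $M$ et $M_1$ y ont même $\hat{H}^{-1}$ : c'est finalement la seule parité de $d$ qui décide de l'annulation.
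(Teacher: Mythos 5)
Votre démonstration est correcte et suit essentiellement la même voie que celle de l'article : réduction, via le lemme \ref{Hm1J} et la proposition \ref{ww} (en utilisant la connexité de $E(\R)$), au $G$-module $Hom_{varab}(\Delta_{\C},E_{\C})$, identification de ce module avec le module $M_1$ du lemme \ref{h1mc} au moyen de l'isomorphisme $\phi_{\theta}$ et de la torsion $\sigma(\phi_{\theta})=-\phi_{\theta}$, puis conclusion par le lemme \ref{h1mc}(ii) puisque $d$ est impair. La seule différence, purement cosmétique, est que vous transportez l'action de façon abstraite via $\sigma(\rho\circ\phi_{\theta})=(-\rho^{\sigma})\circ\phi_{\theta}$ (en justifiant $\sigma(\omega)=d-\omega$ par $End_{\R}(E)=\Z$), alors que l'article effectue le même calcul directement sur les points en invoquant la Galois-équivariance du morphisme d'évaluation.
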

\begin{proof}
En utilisant le lemme \ref{Hm1J} et la proposition \ref{ww}, on obtient  
 $$ \hat{H}^{-1}
(G,\Pic(E_{F'}))=H^1(G, \Pic(E_{F'})) = H^1(G, Hom_{varab}({\Delta}_{\C}, E_{\C})).$$

Soit $\phi_{\theta}: \Delta_{\C}\to E_{\mathbb C}$ l'isomorphisme
 $$\phi_{\theta}(v,w)  =(-v, iw).$$
Cet isomorphisme induit un isomorphisme de groupes ab\'eliens 
$$End_{\C}(E)\to Hom_{varab}(\Delta_{\C}, E_{\C}),\; \rho\mapsto \rho\circ\phi_{\theta}.$$
 On a donc   $Hom_{varab}(\Delta_{\C}, E_{\C})=\Z \phi_{\theta}\oplus\Z (\omega\circ\phi_{\theta})$.

On examine l'action du groupe $G$:  
 on a $$\sigma(\phi_{\theta}(v,w))= \sigma(-v,iw)=(-v,-iw)=-\phi_\theta(v,w).$$   
 De m\^eme,  
 $$\sigma(\omega\circ\phi_{\theta}(v,w))= \sigma(\omega(-v,iw))\stackrel{(a)}{=}d(-v,-iw)-\omega(-v,-iw)=$$
 $$\stackrel{(b)}{=}-d\phi_{\theta}(v,w)+\omega(-v,iw)=-d\phi_{\theta}(v,w)+\omega\circ\phi_{\theta}(v,w).
 $$
    
L'\'egalit\'e $(a)$ utilise que le morphisme d'\'evaluation
$$End_{\C}(E_{\C})\times E_{\C}\to E_{\C}$$  est Galois-\'equivariant.
L'\'egalit\'e $(b)$ utilise que $\omega(-P)=-\omega(P)$ pour tout point $P$ de $E$ puisque $\omega$ est un endomorphisme de $E$.
  
 Notant $e_{1} =\phi_{\theta}$ et $e_{2}= \omega \circ \phi_{\theta}$, 
on voit 
 que le $G$-module $Hom_{varab}(\Delta_{\C} ,E_{\C})$ est isomorphe au $G$-module $M_1$ du lemme \ref{h1mc} $(ii)$. Ainsi $\hat{H}^{-1}(G,M_1)=0$.
\end{proof}

On peut maintenant donner une autre famille de  fibrations en surfaces quadriques $X\to \mathbb P^1_{\R}$
dont l'espace total $X$ est $CH_0$-universellement trivial.

 \begin{thm}\label{emc}
 Dans la situation \ref{Situation 2}, avec  $p(u)$ de degr\'e 2, supposons
 satisfaites toutes les hypoth\`eses du lemme  \ref{lemmaemc}.

   Alors 
   \begin{itemize}
\item[(i)] pour le corps $F=\R(\Delta)$, l'application
$H^3(F,\Z/2) \to H^3_{nr}(F(X)/F,\Z/2)$  est un isomorphisme; 
 \item[(ii)] pour tout corps $F$ contenant $\R$, on a $A_{0}(X_{F})=0$.
 \end{itemize}
  \end{thm}

\begin{proof}
Le th\'eor\`eme \ref{BOH4} et le lemme \ref{lemmaemc} donnent l'\'enonc\'e (i).
L'\'enonc\'e $(i)$ implique $(ii)$  d'apr\`es le th\'eor\`eme \ref{equivalencesR}. 
\end{proof}

\section{Courbes elliptiques satisfaisant les hypoth\`eses du th\'eor\`eme \ref{emc}}\label{zarhin}

Les r\'esultats de cette section nous ont \'et\'e communiqu\'es par Yuri  Zarhin.
Ils donnent un crit\`ere pour que l'hypoth\`ese 
 sur  $End_{\C}(E)$ dans le   th\'eor\`eme \ref{emc} ci-dessus soit satisfaite. 
 Voir  \cite{Z24} pour des résultats plus généraux.

 Si $D$ est un entier n\'egatif, on \'ecrit $\sqrt{D}=id$, o\`u $d\in \R$ est positif,  tel que $d^2=-D$.

\begin{prop} \label{propZ0} (voir aussi \cite[Definition 1.1]{Z24}) 
Soit $E$ une courbe elliptique sur $\mathbb R$ \`a multiplication complexe par un ordre dans un corps quadratique imaginaire $K=\mathbb Q(\sqrt{D})$,  o\`u $D$ est un entier n\'egatif, non divisible par un carr\'e.
Les conditions suivantes sont \'equivalentes:
\begin{itemize}
\item[(i)] $End_{\C}(E) =
\Z[\omega],$  o\`u $\omega \in \C$ est un entier quadratique imaginaire, satisfaisant une \'equation
$$\omega^2 - d \omega +c =0, \mbox{ o\`u }d,c \in \Z\mbox{ et }d \mbox{ est impair}.$$
\item[(ii)] $End_{\C}(E)=\mathbb Z\oplus f\mathcal O_K$ est un ordre dans le corps  $K$, o\`u $f$ est un entier impair, et $D\equiv 1\mbox{ mod }4$.
\item[(iii)] La restriction  de l'application trace $$K\to \Q, \;\lambda+\mu\sqrt{D}\mapsto 2\lambda$$ induit une application surjective $tr:End_{\C}(E)\to \Z$.
\end{itemize}
\end{prop}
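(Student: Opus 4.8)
The plan is to reduce everything to the arithmetic of the quadratic order $\O := End_{\C}(E)$ inside $K = \Q(\sqrt D)$. By the CM hypothesis $\O$ is an order in $K$, hence of the form $\O_f = \Z + f\O_K$ (the order of conductor $f$) for a unique integer $f \geq 1$. I would first record the standard description of the maximal order: writing $\tau = \sqrt D$ when $D \equiv 2, 3 \pmod 4$ and $\tau = (1+\sqrt D)/2$ when $D \equiv 1 \pmod 4$, one has $\O_K = \Z[\tau]$ and $\O_f = \Z[f\tau] = \Z \cdot 1 \oplus \Z \cdot f\tau$. The organizing observation, which I would isolate at the outset, is that all three conditions are really statements about the \emph{parity} of the trace of a generator of $\O$.

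Next I would treat (ii) $\Leftrightarrow$ (iii) by a direct computation of the image of the trace map. For any $\Z$-basis $\{1, \omega_0\}$ of $\O$, the $\Z$-linear map $\on{tr} : \O \to \Z$ has image $\Z \cdot \on{Tr}(1) + \Z \cdot \on{Tr}(\omega_0) = \gcd(2, \on{Tr}(\omega_0))\,\Z$, so it is surjective iff $\on{Tr}(\omega_0)$ is odd. Taking $\omega_0 = f\tau$ and computing $\on{Tr}(f\tau) = 0$ when $D \equiv 2, 3 \pmod 4$ and $\on{Tr}(f\tau) = f$ when $D \equiv 1 \pmod 4$ shows that surjectivity of $\on{tr}$ holds exactly when $D \equiv 1 \pmod 4$ and $f$ is odd, which is~(ii).

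For the equivalence with (i), I would use that the parity of the trace of a generator is an invariant of $\O$: any other $\Z$-algebra generator has the form $a \pm \omega_0$ with $a \in \Z$, whose trace $2a \pm \on{Tr}(\omega_0)$ has the same parity as $\on{Tr}(\omega_0)$. Hence a generator of odd trace exists iff $\on{Tr}(f\tau)$ is odd, i.e.\ iff $D \equiv 1 \pmod 4$ and $f$ is odd; combined with the previous step this yields (i) $\Leftrightarrow$ (ii) $\Leftrightarrow$ (iii). For the explicit shape demanded in~(i) I would exhibit the generator $\omega = f\tau$, which satisfies $\omega^2 - f\omega + f^2(1-D)/4 = 0$ with $d = f$ odd and $c = f^2(1-D)/4 \in \Z$ (integral precisely because $D \equiv 1 \pmod 4$).

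I do not expect a serious obstacle; the argument is elementary once the identification $\O = \O_f$ is in hand. The only points requiring care are keeping the two congruence cases for $\O_K$ separate, and checking that ``possessing a monogenic generator of odd trace'' is genuinely \emph{equivalent} to surjectivity of the trace rather than merely implied by it --- both directions follow from the remark that the trace-parity of a generator is a well-defined invariant of $\O$.
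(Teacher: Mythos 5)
Your proof is correct and takes essentially the same approach as the paper: both identify $End_{\C}(E)$ with the order $\Z\oplus\Z f\tau$, compute the image of the trace map as $\gcd(2,\mathrm{Tr}(f\tau))\,\Z$ to get (ii)$\Leftrightarrow$(iii), and exhibit the same explicit generator $\omega=f\tau$ with $d=f$ and $c=f^{2}(1-D)/4$ for (ii)$\Rightarrow$(i). The only (cosmetic) difference is that you close the loop through the parity invariance of traces of ring generators, giving (i)$\Rightarrow$(ii) directly, whereas the paper notes (i)$\Rightarrow$(iii) immediately from the oddness of $d=\mathrm{Tr}(\omega)$.
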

\begin{proof}
Les conditions $(ii)$ et $(iii)$ sont \'equivalentes d'apr\`es la description de l'anneau des entiers du corps quadratique imaginaire $K$. La condition $(i)$ implique clairement $(iii)$. La condition $(ii)$ implique $(i)$: il suffit de prendre $d=f, c=f^2(1-D)/4$.
\end{proof}

\medskip

La proposition suivante est un cas particulier de \cite[Thm. 2.4]{Z24} (voir  \cite[Lemma 3.9, Ex. 3.10]{Z24}):
\begin{prop} \label{propZ}
Soit $E$ une courbe elliptique sur $\mathbb R$. Supposons que
le discriminant  d'une \'equation de Weierstra{\ss} de $E/\R$ est n\'egatif.
Alors $E(\mathbb C)\simeq E_{\tau}=\C/(\Z+\Z\tau)$ o\`u $\tau=\frac{1}{2}+i\frac{y}{2}$ avec $y\geq 1$ un nombre r\'eel. 

Le conditions \'equivalentes de la proposition \ref{propZ0} ci-dessus sont satisfaites pour $E$ si et seulement s'il existe un entier n\'egatif $D$, $D\equiv 1\mbox{ mod }4$, et des entiers positifs impairs $k, \beta$,  o\`u $\beta$ divise $k^2D$, tels que
\begin{equation}\label{taueq}
\tau=\frac{1}{2}+\frac{k}{2\beta}\sqrt{D}.
\end{equation}
Les invariants r\'eels $j(E_{\tau})$ pour $\tau$ de la forme (\ref{taueq}) sont dans l'intervalle $(-\infty, 1728]$, et ils sont  denses pour la topologie r\'eelle.
\end{prop}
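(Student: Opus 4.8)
The plan is to encode the entire statement in a single invariant of the period ratio $\tau$, namely the denominator $A_{0}$ of the rational number $N:=|\tau|^{2}=(1+y^{2})/4$. A real elliptic curve with negative Weierstrass discriminant has connected real locus, hence rhombic period lattice, so $E(\C)\simeq\C/(\Z+\Z\tau)$ with $\mathrm{Re}(\tau)=\tfrac12$, i.e. $\tau=\tfrac12+i\tfrac{y}{2}$ with $y>0$; since $\bigl(\begin{smallmatrix}1&-1\\2&-1\end{smallmatrix}\bigr)\in SL_{2}(\Z)$ preserves the line $\mathrm{Re}(\tau)=\tfrac12$ and acts there by $y\mapsto1/y$, one normalises $y\ge1$. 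The one elementary fact I will use repeatedly is that $\mathrm{Re}(\tau)=\tfrac12$ forces $\mathrm{Tr}(\tau)=\tau+\bar\tau=1$.

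Next, $E$ has complex multiplication iff $\tau$ is imaginary quadratic, i.e. $N\in\Q$. A direct computation of $\mathcal O_{\tau}=\{\lambda:\lambda(\Z+\Z\tau)\subseteq\Z+\Z\tau\}$ gives $\mathrm{End}(E)=\mathcal O_{\tau}=\Z+\Z\,A_{0}\tau$; using $\mathrm{Tr}(\tau)=1$, the generator $\omega=A_{0}\tau$ satisfies $\omega^{2}-A_{0}\omega+A_{0}^{2}N=0$, with $\mathrm{disc}(\mathcal O_{\tau})=A_{0}^{2}(1-4N)=-(A_{0}y)^{2}$. Hence $\omega$ has trace $A_{0}$ and every generator of $\mathcal O_{\tau}$ has trace $\equiv A_{0}\pmod{2}$, so by condition (iii) of Proposition~\ref{propZ0} (surjectivity of the trace) the equivalent conditions hold if and only if $A_{0}$ is odd.

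It then remains to identify the condition $A_{0}$ odd with the shape (\ref{taueq}). Writing $y=\tfrac{k}{\beta}\sqrt{|D|}$ with $D<0$ squarefree and $\Q(\tau)=\Q(\sqrt D)$ gives $N=(\beta^{2}-k^{2}D)/(4\beta^{2})$. If $D\equiv1\pmod{4}$ and $k,\beta$ are odd, then the $2$-adic valuation of $\beta^{2}-k^{2}D$ is at least $2=v_{2}(4\beta^{2})$, so the factor $4$ cancels and $A_{0}=\beta^{2}/\gcd(k^{2}D,\beta^{2})$ is odd; this is the easy direction. Conversely the hypotheses give, via Proposition~\ref{propZ0}, that $A_{0}$ and the conductor $f$ are odd and $D\equiv1\pmod{4}$; from $\mathrm{disc}(\mathcal O_{\tau})=f^{2}D=-(A_{0}y)^{2}$ one gets $f\beta_{0}=A_{0}k_{0}$ for $k_{0}/\beta_{0}=y/\sqrt{|D|}$ in lowest terms, forcing $k_{0},\beta_{0}$ odd, and replacing $(k_{0},\beta_{0})$ by $(k_{0}\beta_{0},\beta_{0}^{2})$ produces odd $k,\beta$ with $\beta\mid k^{2}D$ defining the same $\tau$.

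Finally, on this segment $q=e^{2\pi i\tau}=-e^{-\pi y}$ is real, so $j=q^{-1}+744+\cdots$ is real; it is classical for the real $j$-line that $y\mapsto j(\tfrac12+i\tfrac{y}{2})$ is a decreasing bijection of $[1,\infty)$ onto $(-\infty,1728]$ (value $1728$ at $y=1$, where $\tau=\tfrac{1+i}{2}\sim i$, and $j\to-\infty$ as $y\to\infty$), which gives both the stated interval and, by continuity, reduces density to density of the admissible $y$ in $[1,\infty)$ — already attained for $D=-3$ by the numbers $\tfrac{k}{\beta}\sqrt3$ with $k,\beta$ odd and $\beta\mid3k^{2}$. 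The main obstacle is the $2$-adic bookkeeping of the third paragraph, namely seeing that $A_{0}$ odd corresponds exactly to $k,\beta$ odd together with $D\equiv1\pmod{4}$, and realising the divisibility $\beta\mid k^{2}D$ by an odd rescaling that does not move $\tau$; the uniformization and the behaviour of $j$ are standard facts about the real $j$-line.
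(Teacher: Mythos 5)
Your proof is correct, and it is organized genuinely differently from the paper's. The paper works directly with the lattice conditions $\alpha\Lambda\subset\Lambda$, writing $\alpha=a+b\tau$, $\alpha\tau=c+d\tau$ and extracting the integrality constraint $c=\frac{b}{4}\bigl(\frac{k^2D-\beta^2}{\beta^2}\bigr)$ in each direction separately: in the forward direction it uses the hypothesis $\beta\mid k^2D$ to produce elements with $b=\beta$ odd (hence odd trace), and in the converse it parametrizes $t=-y^2$, shows $t\in\Q$, and factors $t=k^2D/\beta^2$ by hand. You instead compress the endomorphism computation into the single invariant $A_0$ (the denominator of $|\tau|^2$, using $\mathrm{Tr}(\tau)=1$ and the standard fact $\mathrm{End}(E_\tau)=\Z+\Z A_0\tau$ for a primitive quadratic equation), obtaining the clean criterion ``conditions of Proposition~\ref{propZ0} $\Leftrightarrow$ $A_0$ odd,'' and then translate to the shape (\ref{taueq}) via the discriminant identity $f^2D=-(A_0y)^2$ plus the rescaling $(k_0,\beta_0)\mapsto(k_0\beta_0,\beta_0^2)$. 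This buys two things the paper's argument does not make explicit: the divisibility $\beta\mid k^2D$ is superfluous in the ``if'' direction (it is a normalization one can always achieve without moving $\tau$), and the density statement can be witnessed with a single fixed $D=-3$, whereas the paper varies $D=-(n^2+2)$ over odd $n$. Both arguments rest on the same classical facts about the real $j$-line on $\mathrm{Re}(\tau)=\tfrac12$.

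One small point you should make explicit at the end: the density of the admissible $y$ for $D=-3$ is not quite immediate from the divisibility constraint $\beta\mid 3k^2$ alone, but it does follow from your own rescaling remark, since the admissible set then contains all $\frac{k_0}{\beta_0}\sqrt{3}$ with $k_0,\beta_0$ odd, and such quotients are dense in $(0,\infty)$ (take $\beta_0=3^m$ and $k_0$ odd, with spacing $2\sqrt{3}/3^m\to 0$). With that one line added, the argument is complete.
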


\begin{rmk}
Pour une courbe elliptique $E$ sur $\R$ avec $j(E)\neq 0, 1728$ on  a l'\'equivalence entre les hypoth\`eses:
\begin{enumerate}
\item le discriminant  d'une \'equation de Weierstra{\ss} de $E/\R$ est n\'egatif;
\item $E(\R)$ est connexe;
\item $j(E)<1728$.
\end{enumerate}
(voir les formules \cite[III, $\S 1$]{S92}).
\end{rmk}

\begin{proof}

 Le premier \'enonc\'e est standard (voir \cite[\S III.1]{S92} et \cite[Thm. V.1.1c)]{S94} pour la d\'efinition du discriminant 
 d'une \'equation de Weierstra{\ss} d'une courbe elliptique $E$
 et \cite[Chapitre V, Prop. 2.1, preuves du Thm. 2.3 et Cor. 2.3.1]{S94} pour l'expression de $\tau$ et le fait que ${\rm sgn}(\Delta(E_{\tau}))={\rm sgn}(e^{2\pi i\tau})$).
 
 \medskip

 Supposons que $\tau$ soit de la forme (\ref{taueq}). Soit $\Lambda=\Z+\Z\tau$ le r\'eseau correspondant. Alors  
\begin{multline*} 
 End_{\C}(E_{\tau})=\{\alpha\in \C, \alpha\Lambda\subset \Lambda\}=\{\alpha\in \C, | \,\exists a,b,c,d\in \Z, \, \alpha= a+b\tau, \,\alpha\tau=c+d\tau\}=\\=\{\alpha\in \C, | \,\exists a,b,c,d\in \Z, \,\alpha=a+b/2+\frac{kb}{2\beta}\sqrt{D}, \alpha\tau = c+d\tau. \}
\end{multline*}

On \'ecrit cette derni\`ere condition $(a+b\tau)\tau=c+d\tau$
 explicitement:
$$ (a+b\tau)\tau = \frac a2+ \frac b4 +\frac{k^2bD}{4\beta^2}+\frac{k}{2\beta}\sqrt{D}(a+b)=c+d/2+\frac{kd}{2\beta}\sqrt{D},$$
ce qui est \'equivalent \`a la condition
$$\; d=a+b,\; c=\frac{b}{4}\left(\frac{k^2D-\beta^2}{\beta^2}\right)$$. 

Ainsi $$End_{\C}(E_{\tau})=\{\alpha\in \C, | \,\exists a,b\in \Z, \, \alpha=a+b/2+\frac{kb}{2\beta}\sqrt{D}\mbox{ et } \frac{b}{4}\left(\frac{k^2D-\beta^2}{\beta^2}\right) \mbox{est dans }\Z.\}
$$

Comme on a suppos\'e que $\tau$ est de la forme (\ref{taueq}), on a que $D\equiv 1\mbox{ mod }4$,  $k, \beta$ sont impairs et $\beta | k^2D$,  d'o\`u
$4\beta|k^2D-\beta^2$.
Ainsi  pour tout $a$, et pour $b$ divisible par $\beta$, la condition  $\frac{b}{4}\left(\frac{k^2D-\beta^2}{\beta^2}\right)\in \Z$ est satisfaite. Soit alors $b=\beta n$ o\`u $n\in \Z$ impair. 
On a alors $tr(\alpha)=tr(a+b/2+\frac{kb}{2\beta}\sqrt{D})=2a+b$,
ainsi la condition (iii) de la proposition \ref{propZ0} est satisfaite.
 
\medskip

Inversement, on \'ecrit les conditions $\alpha= a+b\tau, \,\alpha\tau=c+d\tau$ pour $\tau=\frac{1}{2}+i\frac{y}{2}=\frac{1}{2}+\frac{1}{2}\sqrt{t}$ o\`u $t=-y^2$ est un nombre r\'eel n\'egatif. On obtient:
$$\alpha=a+b/2+\frac{b}{2}\sqrt{t},\; \alpha\tau = \frac a2+\frac b4 +\frac{bt}{4}+\frac 12 \sqrt{t}(a+b)=c+d\tau,$$
d'o\`u
$$d=a+b, \;c=\frac{b}{4}(t-1).$$
Ainsi $t$ est rationnel, on peut l'\'ecrire $t=\frac{m}{\beta}=\frac{m\beta}{\beta^2}=\frac{k^2D}{\beta^2}$,
o\`u $m,\beta, k,D$ sont des entiers, $m\beta=k^2D<0$,
le carr\'e
  $k^2$ est 
le plus grand carr\'e entier qui
 divise $m\beta$, et $D$ n'est pas divisible par un carr\'e. Ainsi $D<0$ et $\beta|k^2D$ par d\'efinition.
La condition $c=\frac{b}{4}(t-1)=\frac{b}{4}\left(\frac{k^2D-\beta^2}{\beta^2}\right)\in \mathbb Z$ et la condition $(iii)$ de la proposition \ref{propZ0} donnent que $b$ prend des valeurs impaires, et donc $4$ divise $k^2D-\beta^2$, d'o\`u l'on d\'eduit $D\equiv 1$ mod $4$, et $k,\beta$ sont impairs.
 
\medskip

  Finalement,  soit $f:[\frac{1}{2}, \infty)\to (-\infty, \infty)$
   la fonction r\'eelle d\'efinie par $$f(y)=j( \C/(\Z+\Z(\frac{1}{2}+iy))).$$
 On a:
 \begin{itemize}
  \item[(i)] La fonction $f$   induit une bijection continue   
  d\'ecroissante
  $$ f : [1/2, \infty) \to [1728, -\infty).$$ On a  $f(1/2)=1728$, $f(\sqrt{3}/2)=0$.
 Pour ceci, voir \cite[Chapitre V, Prop. 2.1, Thm. 2.3  et leurs preuves]{S94}. 
 \item[(ii)] Soient $n$ un entier impair, $\beta=n^2+2, D=-(n^2+2)$, et $k>n$ un entier impair. Soit 
 $$y_{k,n}=\frac{k}{2(n^2+2)}\sqrt{n^2+2}=\frac{k}{2\sqrt{n^2+2}}$$ et soit $\tau_{k,n}=1/2+iy_{k,n}$.
 D'apr\`es ce qui pr\'ec\`ede, les conditions de la proposition \ref{propZ0} sont satisfaites pour les courbes elliptiques $E_{\tau_{k,n}}$. Par ailleurs, les r\'eels $y_{k,n}$ sont denses dans l'intervalle $[\frac{1}{2}, \infty)$. Puisque $f$ est continue et bijective, les $j$-invariants de $E_{\tau_{k,n}}$ sont denses dans $(-\infty, 1728]$.
 \end{itemize}
\end{proof}

\section{Comparaison des r\'esultats des deux m\'ethodes
pour $p(u)$ de degr\'e 2}\label{scomp}

Soit $p(u)=u^2+au+b \in \R[u]$ un polyn\^ome s\'eparable unitaire de degr\'e $2$, avec $b=p(0)\neq 0$, tel que $p$ est
 positif sur $\R$. On a donc 
 $b>0$ et
 $0 \leq a^2/b <4$.
   Soit $X$ le mod\`ele projectif et lisse standard de la vari\'et\'e d'\'equation
   $$ x^2+y^2+z^2= u.p(u),$$
On compare ici  les r\'esultats de $CH_{0} $-trivialit\'e sur $X$  obtenus   \`a la section \ref{sectioncarres}
 (premi\`ere m\'ethode, sommes de carr\'es) avec ceux obtenus \`a la section  \ref{avecMCimpaire} (deuxi\`eme m\'ethode, multiplication complexe impaire).

Les formules \cite[III, $\S 1$]{S92} pour le $j$-invariant de la courbe $$E: z^2=u^3+au^2+bu$$ donnent
$$\Delta(E)=-16b^3(4-a^2/b)<0, \mbox{ et}$$
$$j(E)=256\frac{(3-(a^2/b))^3}{4-(a^2/b)}.$$
L'invariant $j(E)$ est un nombre r\'eel.
On a  $0 \leq a^2/b \leq 3$  si et seulement si $j(E)  \geq 0$,
et alors $0 \leq j(E) \leq 1728$.
On a $3 \leq a^2/b < 4$ si et seulement si $j(E)\leq  0$.
On a $a^2/b=3$ si et seulement si $j(E)=0$.
Pour $a^2/b=0$, i.e. $a=0$ on a $j(E)=1728$.

\bigskip

Soit $X/\R$ d'\'equation affine 
\begin{equation}\label{lasteq}
X:\;x^2+y^2+z^2=u.p(u).
\end{equation}

On a montr\'e  que le groupe $A_0(X_F)$ est nul pour tout corps 
$F$ contenant $\R$ dans chacun des cas suivants.
 
$(*)$  (premi\`ere m\'ethode) 
 D'apr\`es le th\'eor\`eme \ref{ex2}
 il en est ainsi  si $0 \leq a^2/b \leq 3$, c'est-\`a-dire si  $j(E)\geq 0$.
Dans ce cas $ j(E)$  
d\'ecro\^{\i}t  
contin\^{u}ment de
  $1728$ \`a $0$.

$(**)$  (deuxi\`eme m\'ethode)
 D'apr\`es le th\'eor\`eme \ref{emc},  il en est ainsi  si la courbe elliptique $E/\R$ est \`a multiplication complexe et l'on a $End_{\C}(E) = \Z[\omega],$  o\`u $\omega \in \C$ est un entier quadratique imaginaire, satisfaisant une \'equation $\omega^2 - d \omega +c =0$ avec $d,c \in \Z$, et $d$ impair.
Dans ce cas les invariants $j(E)$ sont alg\'ebriques et en particulier d\'enombrables.
Pour $j(E) <0$ seule cette m\'ethode est \'eventuellement disponible.
 D'apr\`es la proposition \ref{propZ} (Zarhin),
 on obtient que les invariants r\'eels $j(E)$ correspondant \`a ce cas sont dans l'intervalle $[-\infty, 1728]$, et ils sont  denses pour la topologie usuelle dans 
cet  intervalle.

Voici quelques exemples.

\begin{enumerate}

\item  Soit
$p(u)=u^2-3u+3$.
La courbe elliptique  $E$ est donn\'ee par l'\'equation
$$z^2=(u-1)^3+1.$$
On a $j(E)=0$.
La courbe $E$ est \`a multiplication complexe par les racines cubiques de l'unit\'e.
L'\'equation est ici $\omega^2 + \omega +1 =0$ et
$d=-1$ 
 est impair.
 Les deux m\'ethodes s'appliquent.

\item 

Soit $p(u)=u(u^2+1)$. La courbe elliptique $E$ est donn\'ee par l'\'equation
$$z^2=u(u^2+1).$$
On a $j(E)=1728.$
Dans ce cas la premi\`ere m\'ethode s'applique.
La seconde m\'ethode ne s'applique pas. On a multiplication complexe par $\Z[\sqrt{-1}]$. 
L'\'equation  est ici $\omega^2+1=0$ et $d=0$ n'est pas impair.

\item Soit  $p(u)=u^2-21u+112.$
La courbe elliptique $E$  est donn\'ee par l'\'equation
$$z^2=u(u^2-21u+112).$$
On a ici  $a=-21$, $b=112$, $p$ est un polyn\^ome positif avec $3< \frac{a^2}{b}$ donc $j(E)<0$.
La premi\`ere m\'ethode ne s'applique pas.
 Apr\`es le changement de variables $t=u+7$ on obtient que $E$ est donn\'ee par l'\'equation 
$z^2=t^3-35t+98$ que l'on trouve dans la base de donn\'ees \cite{LMFDB}.
Elle  a multiplication complexe par $\Z[\omega]$ avec $\omega = \frac{(1+\sqrt{-7})}{2}$.
L'\'equation  est ici $\omega^2-\omega+2=0$ et
$d=1$
est impair.
La seconde m\'ethode s'applique.
\end{enumerate}

\section{Rationalit\'e}\label{sectioncomp}

On donne des exemples de fibrations en quadriques $$\pi: X\to \P^1_{\R}$$ 
de dimension relative
au moins 1
dont l'espace total
est rationnel sur $\R$.

\subsection{Th\'eor\`eme de Witt \cite{W1,W2})}
Dans \cite[Satz 22]{W2}, Witt montre :
\begin{thm} Soit $\Gamma$ une courbe r\'eelle g\'eom\'etriquement int\`egre.
Une forme quadratique non d\'eg\'en\'er\'ee en au moins 3 variables sur
le corps $\R(\Gamma) $  repr\'esente z\'ero si elle  repr\'esente
z\'ero sur  toutes les compl\'etions de $\R(\Gamma)$ sauf au plus
un nombre fini.
\end{thm}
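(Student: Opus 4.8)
The plan is to prove the nontrivial implication (isotropy at all but finitely many completions forces global isotropy) by translating the local hypothesis into a statement about signatures at the orderings of $K := \R(\Gamma)$, and then invoking the fact that the Hasse number of a real function field of one variable equals $2$. Write the form as $q \cong \langle a_1, \dots, a_n \rangle$ with $a_i \in K^\times$ and $n \geq 3$, and fix a smooth projective model $\Gamma/\R$ with function field $K$. First I would dispose of the complex places: if $x$ is a closed point with residue field $\C$, then $K_{v_x} \cong \C((t))$ has algebraically closed residue field, so by Springer's theorem every form of dimension $\geq 3$ is isotropic over $K_{v_x}$; thus complex places impose no condition and the finitely many places where $q$ might fail to be isotropic are real. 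Let $S$ be the finite set of closed points at which some $a_i$ has a zero or a pole. For a real point $x \notin S$ all the $a_i$ are units at $x$, $K_{v_x} \cong \R((t))$, and $q \otimes K_{v_x}$ reduces to $\langle a_1(x), \dots, a_n(x)\rangle$ over $\R$; by Springer's theorem $q_{v_x}$ is isotropic if and only if the real numbers $a_1(x), \dots, a_n(x)$ are not all of the same sign, equivalently if and only if $q$ is indefinite at the (two) orderings of $K$ centered at $x$.

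Next I would deduce that $q$ is totally indefinite. Consider the definite locus $D$ in the space of orderings $X_K$, i.e. the set of orderings $P$ at which all $a_i$ share a sign; it is the union of the two basic clopen sets $\{P : a_i >_P 0 \ \forall i\}$ and $\{P : a_i <_P 0 \ \forall i\}$. Since $\Gamma$ is complete, every ordering of $K$ is centered at a unique real point of $\Gamma$. Hence, if $D$ were nonempty, one of these two clopen sets would be nonempty and would correspond to the nonempty open subset of $\Gamma(\R) \setminus S$ on which all $a_i$ keep a constant sign; being a nonempty open subset of the one-dimensional manifold $\Gamma(\R)$, it is infinite. This would produce infinitely many real points $x \notin S$ at which all the $a_i(x)$ share a sign, hence infinitely many real completions at which $q$ is anisotropic, contradicting the hypothesis. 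Therefore $D = \emptyset$, and $q$ is indefinite at every ordering of $K$.

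It remains to show that a totally indefinite form of dimension $\geq 3$ over $K = \R(\Gamma)$ is isotropic, and this is the step I expect to be the main obstacle: it is exactly the assertion that the Hasse number $\tilde u(\R(\Gamma))$ equals $2$. In dimension $3$ it reduces to a Hasse principle for quaternion algebras, since $\langle a,b,c\rangle$ is isotropic if and only if $(-ac,-bc) \in \Br(K)[2]$ vanishes; total indefiniteness says precisely that this class is split at every real place (it is automatically split at the complex places, where $\Br(\C((t)))=0$), and one concludes that a $2$-torsion Brauer class that is everywhere locally trivial is trivial, using the residue/reciprocity exact sequence for $\Br(\R(\Gamma))$ together with the explicit description of $\Br(\R((t)))[2]$. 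The general statement $\tilde u(\R(\Gamma)) = 2$ then follows by reducing to dimensions $3$ and $4$. Granting this, the totally indefinite form $q$ of dimension $\geq 3$ is isotropic over $K$, which is the conclusion sought; the converse implication of the theorem is immediate, as isotropy over $K$ descends to every completion.
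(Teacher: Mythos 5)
The paper itself gives no proof of this statement: it is quoted as Witt's theorem \cite[Satz 22]{W2}. So the only question is whether your argument is complete, and it is not. Your reduction of the hypothesis to total indefiniteness is correct in substance: Springer's theorem at complex points, good reduction at real points outside $S$, and the observation that a definite locus would be open in $\Gamma(\R)$ and hence infinite. The one imprecision is for an ordering centered at a point of $S$, where the $a_i$ cannot be evaluated; there you need the fact (Artin--Lang, or the description of orderings by half-branches) that the sign of a function at an ordering centered at $x$ is its sign on a one-sided neighborhood of $x$, which again produces infinitely many points of $\Gamma(\R)\setminus S$ with the prescribed signs. That is easily repaired.

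The genuine gap is the final step, which you yourself flag as the main obstacle: the assertion $\tilde u(\R(\Gamma))=2$ \emph{is} the theorem, and your proposal only treats dimension $3$ --- and even there the ``residue/reciprocity exact sequence'' you invoke is the Faddeev sequence for $\P^1_{\R}$; for a general curve $\Gamma$ the Hasse principle for quaternion algebras requires the structure of $\Br(\Gamma)$, i.e. the injection $\Br(\Gamma)\hookrightarrow (\Z/2)^{\pi_0(\Gamma(\R))}$ of Proposition \ref{ww}, which is Witt's other theorem \cite{W1} (citable, but it should be cited, not re-derived in half a sentence). More seriously, the claim that ``the general statement then follows by reducing to dimensions 3 and 4'' hides the hard part twice over. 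The dimension $4$ case is not proved at all: it requires passing to the discriminant extension $K(\sqrt{d})$ and applying a Hasse principle for quaternions over that field. And the classical Hasse--Minkowski induction from dimension $\geq 5$ does not transfer verbatim to $K=\R(\Gamma)$: unlike a number field, $K$ has infinitely many real places, so when you split $q=f\perp g$ and seek a value $c$ represented by both $f$ and $-g$, you must prescribe the sign of $c$ at \emph{all} orderings of $K$ simultaneously, so that the lower-dimensional forms $f\perp\langle -c\rangle$ and $g\perp\langle c\rangle$ are again totally indefinite; controlling finitely many bad completions by weak approximation is not enough. This sign-prescription step needs the strong approximation property (SAP) of the space of orderings of $\R(\Gamma)$ (Knebusch's work on curves over real closed fields), or an equivalent lemma, and nothing in your proposal provides it. As written, the core of Witt's theorem is asserted rather than proved.
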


En particulier, si $\Gamma=\P^1_{\R}$ et
l'application $X(\R)\to \P^1_{\R}(\R)$ est surjective, alors la fibration $\pi$ admet une section, et donc $X$ est rationnelle.

\subsection{Fibrations en coniques sur $\P^1_{\R}$}

Soit maintenant $X  \to \P^1_{\R}$ d\'efinie par une \'equation affine
$$x^2+a(u) y^2 = u.f(u)$$
avec $a(u)$ et $f(u)$ deux polyn\^omes positifs sur $\R$
premiers entre eux.
Dans ce cas $X(\R)$ est connexe et non vide.

Pour  le corps $\R(u)$, 
 la suite exacte de Faddeev  \cite[Thm. 1.5.2]{CTSkB}
 s'\'ecrit
 $$ 0 \to \Br(\R) \to \Br(\R(u)) \to \oplus_{P \in \P^1(\R)}  H^1(\R,\Z/2) \to \Z/2 \to 0$$
 soit encore
  $$ 0 \to \Br(\R) \to \Br(\R(u)) \to \oplus_{P \in \P^1(\R)}  \Z/2 \to \Z/2 \to 0.$$
L'alg\`ebre de quaternions $(-a(u),-uf(u)) \in \Br(\R(u))$
a  deux r\'esidus non nuls, en $u=0$ et $u=\infty$.
Il en est de m\^{e}me de l'alg\`ebre $(-1,u)$.
Ces deux alg\`ebres ont donc m\^{e}me classe dans $\Br(\R(u))$  
 \`a addition pr\`es d'un
\'el\'ement de $\Br(\R)$. Par \'evaluation en un point r\'eel avec
$u>0$, on voit qu'elles sont \'egales.

Les  formes quadratiques de rang 3  sur $\R(u)$
$$<1,a(u), -uf(u)>$$ et $$ a(u) f(u) <1, 1,-u>$$
ont donc m\^eme discriminant et m\^eme invariant
de Hasse-Witt.   Elles sont donc isomorphes sur $\R(u)$.
Ainsi la fibre g\'en\'erique de $X/\P^1_{\R}$ est la conique
d'\'equation affine  $r^2+s^2-u=0$. Son corps des fonctions
est transcendant pur sur $\R$. Le corps des fonctions
de $X$ est donc transcendant pur sur $\R$.

Plus g\'en\'eralement, soit $X/ \P^1_{\R}$ une fibration en coniques
relativemement minimale. Soit
 $A/\R(u)$
  l'alg\`ebre de quaternions associ\'ee
\`a la fibre g\'en\'erique. Les r\'esidus non nuls de cette alg\`ebre en des points de $\P^1(\R)$
sont en nombre pair et
correspondent
\`a des fibres singuli\`eres form\'ees de deux droites conjugu\'ees, il y a des points r\'eels
d'un c\^{o}t\'e d'une telle fibre, et pas de l'autre. Si $X(\R)$ est connexe, il y donc
z\'ero ou deux tels points. Dans le second cas, on peut supposer que ces points correspondent
\`a $u=0$ et $u=\infty$. Dans le premier cas, l'alg\`ebre $A$ est constante, et donc nulle dans $\Br(\R)$
sinon on aurait $X(\R)=\emptyset$. Ceci implique que $X$ est birationnelle \`a $\P^1_{\R} \times_{\R} \P^1_{\R}$.
Dans le second cas,  la classe de $A \in \Br(\R(u))$ s'\'ecrit $(-1,u)$ (quitte \`a changer $u$ en $-u)$.
Ainsi la fibre g\'en\'erique de $X/\P^1$ est $\R(u)$-isomorphe \`a $X^2+Y^2-uT^2=0$
dans $\P^2_{\R(u)}$. Donc $X$ est $\R$-rationnelle, car $\R$-birationnelle \`a la surface affine
d'\'equation $x^2+y^2-u=0$.

On dispose d'un r\'esultat plus g\'en\'eral :

\begin{thm}
Soit $X/\R$ une surface projective et lisse g\'eom\'etriquement rationnelle
poss\'edant un point r\'eel.
Les conditions suivantes sont \'equivalentes :
\begin{itemize}
\item[(i)] L'espace topologique $X(\R)$ est connexe non vide.
\item[(ii)] La $\R$-surface $X$ est  rationnelle.
\item[(iii)] La $\R$-surface $X$ est stablement rationnelle.
\item[(iv)] La $\R$-surface $X$ est r\'etractilement rationnelle.
\item[(v)] La $\R$-surface $X$ est universellement $CH_{0}$-triviale.
\item[(vi)] Le groupe $A_{0}(X)$ est nul.
\end{itemize}
\end{thm}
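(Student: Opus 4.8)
On commence par la chaîne d'implications
$$(ii)\Rightarrow(iii)\Rightarrow(iv)\Rightarrow(v)\Rightarrow(vi)\Rightarrow(i),$$
qui ne repose que sur les rappels de l'introduction. Une $\R$-variété rationnelle est stablement rationnelle par définition, d'où $(ii)\Rightarrow(iii)$; comme $\R$ est infini, une $\R$-variété stablement rationnelle est rétractilement rationnelle, d'où $(iii)\Rightarrow(iv)$; une $\R$-variété propre, lisse, géométriquement intègre et rétractilement rationnelle est universellement $CH_0$-triviale \cite{CTPENS}, d'où $(iv)\Rightarrow(v)$. En prenant $F=\R$ dans la définition, $(v)$ donne que $\deg_\R\colon CH_0(X)\to\Z$ est un isomorphisme, donc $A_0(X)=\Ker(\deg_\R)=0$, d'où $(v)\Rightarrow(vi)$. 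Enfin, comme $X(\R)\neq\emptyset$, on dispose de l'isomorphisme $A_0(X)/2\simeq(\Z/2)^{s-1}$ \cite{CTIschebeck}, où $s$ désigne le nombre de composantes connexes de $X(\R)$; l'hypothèse $A_0(X)=0$ impose $s=1$, et $X(\R)$ est alors connexe non vide, d'où $(vi)\Rightarrow(i)$.

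Il reste à établir l'implication $(i)\Rightarrow(ii)$, qui est le contenu substantiel de l'énoncé : c'est le théorème de Comessatti \cite{Com}. Le plan est de se ramener à un modèle minimal. Le nombre $s$ de composantes connexes de $X(\R)$ étant un invariant $\R$-birationnel stable des surfaces projectives et lisses géométriquement connexes, il est préservé par le programme des modèles minimaux $G$-équivariant, où $G=\Gal(\C/\R)$. On réduit ainsi $X$ à une surface géométriquement rationnelle $\R$-minimale $X'$, qui est $\R$-birationnelle à $X$ et pour laquelle $X'(\R)$ reste connexe et non vide. La rationalité étant elle-même un invariant $\R$-birationnel, il suffit de montrer que $X'$ est $\R$-rationnelle.

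On invoque alors la classification des surfaces géométriquement rationnelles $\R$-minimales (Comessatti; voir aussi Iskovskikh et Manin) : une telle $X'$ est soit $\P^2_\R$, soit une surface de del Pezzo vérifiant $\Pic(X'_\C)^G=\Z$, soit un fibré en coniques $\R$-minimal au-dessus d'une conique $B$; dans ce dernier cas, comme $X'(\R)\neq\emptyset$ on a $B(\R)\neq\emptyset$, donc $B\simeq\P^1_\R$. Pour chacune de ces familles, on analyse la topologie de $X'(\R)$ et l'on vérifie que connexité et non-vacuité entraînent la $\R$-rationalité : le plan projectif, les quadriques possédant un point réel et les fibrés en coniques concernés sont $\R$-rationnels, tandis que les modèles minimaux non $\R$-rationnels (surfaces de del Pezzo de petit degré) ont un lieu réel vide ou à plusieurs composantes. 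L'obstacle principal sera précisément cette analyse cas par cas de la topologie du lieu réel des surfaces de del Pezzo et des fibrés en coniques $\R$-minimaux, qui constitue le cœur du théorème de Comessatti.
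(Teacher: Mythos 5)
Votre démonstration est correcte et suit essentiellement la même voie que l'article : la chaîne $(ii)\Rightarrow(iii)\Rightarrow(iv)\Rightarrow(v)\Rightarrow(vi)$ par les rappels de l'introduction, $(vi)\Rightarrow(i)$ par l'isomorphisme $A_{0}(X)/2\simeq(\Z/2)^{s-1}$ de \cite{CTIschebeck}, et l'implication difficile $(i)\Rightarrow(ii)$ d\'el\'egu\'ee \`a la classification $\R$-birationnelle des surfaces g\'eom\'etriquement rationnelles (vous citez Comessatti \cite{Com} et esquissez l'argument par mod\`eles minimaux, l\`a o\`u l'article cite directement \cite[Cor. 6.5]{Silhol}, dont la preuve repose sur cette m\^eme classification).
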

\begin{proof}
Que  $(i)$ implique $(ii)$ 
remonte \`a Comessatti 
\cite{Com}. 
Une d\'emonstration  est donn\'ee par
 R. Silhol  \cite[Cor. 6.5]{Silhol}. 
La d\'emonstration repose sur la classification $k$-birationnelle des $k$-surfaces
 g\'eom\'e\-tri\-quement rationnelles.
 Que  $(vi)$ implique $(i)$ est un cas particulier de \cite{CTIschebeck}.
\end{proof}

\subsection{Un exemple en dimension sup\'erieure}
Soit $p(u)\in \R[u]$ non nul, positif sur $\R$, donc de
la forme $p(u)=a(u)^2+b(u)^2$ avec $a(u), b(u) \in \R[u]$.
Soit $m\geq 1$. Toute vari\'et\'e sur $\R$  d\'efinie par une \'equation
$$ \sum_{j=1}^{m} (x_{j}^2+x_{j+1}^2)= u p(u)$$
est rationnelle sur $\R$.
En effet,   le changement de variables
$$ X_{j} +   X_{j+1} \sqrt{-1}  = (x_{j} +  x_{j+1} \sqrt{-1})/(a(u)+  b(u) \sqrt{-1})$$
montre que cette vari\'et\'e est $\R$-birationnelle \`a celle d\'efinie
par
$$\sum_{j=1}^{m} (X_{j}^2+X_{j+1}^2)=u,$$
qui est   $\R$-isomorphe \`a l'espace affine $\A^{2m}_{\R}$.

\bigskip

\section{Vari\'et\'es non rationnelles : deux m\'ethodes}\label{deuxnonrat}

\subsection{Fibrations non de type (I) et calcul de groupe de Brauer}\label{contreexemple}

Soit $X \to \P^1_{\R}$ une fibration en surfaces quadriques d'espace total lisse. Supposons que $X$ n'est pas de type (I). 
 S'il y a une section alors la vari\'et\'e $X$ est rationnelle sur $\R$.
 Supposons qu'il n'y a pas de section.
 Soit $K=\R(\P^1)$.
 L'application $$\Br(K) \to \Br(\R(X))$$ est injective si et seulement si
 le discriminant  de la forme quadratique d\'efinissant la fibre g\'en\'erique n'est pas un carr\'e;
 sinon son noyau est $\Z/2$ \cite[Prop. 7.2.4]{CTSk}.
 Pour $P \in \P^1(\R)$ on note $K_P$ le compl\'et\'e 
 de $K$ au point $P$. On note $t_{P} \in K_{P}$ une uniformisante.
 Soit  $T$ l'ensemble des points   $P \in \P^1(\R)$
 tels que  $X\times_{K}K_{P}$ soit 
 $K_{P}$-birationnel 
 \`a une quadrique d\'efinie
 par  une \'equation
 $$ (x^2+y^2) -t_{P} (u^2+v^2) = 0.$$
 
 Le groupe  $H^2_{nr}(\R(X)/\R, \Z/2)$
   est engendr\'e par les \'el\'ements de $\Br(\R(\P^1))$
   dont tous les r\'esidus aux points hors de $T$ sont nuls  (voir  \cite{Sk} et 
    \cite[Theorem 2.3.1]{CTSD}):
   \begin{itemize}
\item [(*)]  Si le discriminant n'est pas un carr\'e,
   le groupe $H^2_{nr}(\R(X)/\R, \Z/2)$ n'est pas r\'eduit \`a $\Br(\R)$ si le cardinal de $T$
   est au moins 2.
 \item [(**)] Si le discriminant est un carr\'e, le groupe $H^2_{nr}(\R(X)/\R, \Z/2)$ n'est pas r\'eduit
   \`a $\Br(\R)$ si le cardinal de $T$ est au moins 4. 
   \end{itemize}
     
   On peut ainsi donner des exemples de fibrations en surfaces quadriques 
   $X \to \P^1_{\R}$ telles que $X(\R)$ soit connexe mais que $X$
   ne soit pas stablement rationnelle, ni m\^eme $CH_{0}$-triviale.
   
On consid\`ere   la famille donn\'ee dans $\P^3_{\R} \times_{\R} \A^1_{\R}$
avec coordonn\'ees $(x,y,z,t;u)$
  par l'\'equation
  $$ x^2+(1+u^2) y^2 - u (z^2+t^2)=0.$$
  On peut recoller cette famille avec la famille donn\'ee par l'\'equation
  $$x'^2+(1+v^2)y'^2-v(z'^2+t'^2)=0$$
  dans $\P^3_{\R} \times_{\R} \A^1_{\R}$
  avec coordonn\'ees  $(x',y',z',t'; v)$, au moyen du
 changement de variable $(x',y',z',t';v)=(vx, y,z,t; 1/u)$.
  Ceci donne un mod\`ele admissible au sens de \cite[\S 2]{Sk} et \cite[\S 3]{CTSk}.
 On  en d\'eduit (voir  \cite[Prop. 2.4]{Sk}, \cite[Thm. 3.3]{CTSk}) une construction explicite
 d'un mod\`ele $\pi : X \to \P^1_{\R}$ avec $X/\R$ projectif et lisse.
  Pour $u \in \R$, les fibres lisses $X_{u}$ satisfont $X_{u}(\R)\neq \emptyset$ 
  (et connexe) si et seulement si $u >0$.
  L'espace $X(\R)$ 
 a  donc  exactement une composante connexe.
 
   \begin{prop}\label{refX}
  Soit $\pi : X \to \P^1_{\R}$ la fibration en surfaces quadriques d\'efinie ci-dessus: c'est un mod\`ele projectif et lisse de la vari\'et\'e
  $$ x^2+(1+u^2) y^2 - u (z^2+t^2)=0\subset \P^3_{\R} \times_{\R} \A^1_{\R}.$$
  Alors l'image $\alpha$ de la classe $(-1,u) \in H^2(\R(u), \Z/2)$ dans $H^2(\R(X),\Z/2)$ n'est pas dans l'image de $\Br(\R)$, et  on a $$\alpha\in \Br(X)[2]\subset H^2(\R(X),\Z/2).$$
Ainsi l'application  
  $$\Br(\R) \to \Br(X)$$
  n'est pas surjective.
\end{prop}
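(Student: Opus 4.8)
The plan is to prove the two assertions of the proposition separately: first that $\alpha$ is unramified on $X$, hence lies in $\Br(X)[2]=H^2_{nr}(\R(X)/\R,\Z/2)$, and second that $\alpha$ is not the image of a constant class. Non-surjectivity of $\Br(\R)\to\Br(X)$ then follows, since $\alpha$ witnesses a nonconstant unramified class.

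First I would compute the residues of $(-1,u)\in H^2(\R(u),\Z/2)$ at the closed points of $\P^1_{\R}$. Since $-1$ is a constant unit, the tame symbol gives $\partial_P(-1,u)=0$ at every $P$ where $u$ is a unit; the only contributions are $\partial_{u=0}(-1,u)=(-1)$ and $\partial_{u=\infty}(-1,u)=(-1)$ in $H^1(\R,\Z/2)=\R^\times/\R^{\times2}$. Thus $(-1,u)$ ramifies on the base exactly at $u=0$ and $u=\infty$. I would then show that both points lie in $T$. The generic fibre is the quadric of the form $q=\langle 1,1+u^2,-u,-u\rangle$ over $K=\R(u)$. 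Over the completion $K_0=\R((u))$ one has $1+u^2\equiv 1\pmod u$, so by Hensel's lemma $1+u^2$ is a square in $K_0^\times$; hence $q\cong\langle 1,1,-u,-u\rangle$ over $K_0$, which is the form of $(x^2+y^2)-u(z^2+t^2)=0$, so $u=0\in T$. The recollement chart $v=1/u$ gives the form $\langle 1,1+v^2,-v,-v\rangle$, and the identical Hensel argument over $\R((v))$ shows $u=\infty\in T$. Since all residues of $(-1,u)$ on the base are supported on $T$, the structural description of $H^2_{nr}(\R(X)/\R,\Z/2)$ recalled above (\cite{Sk}, \cite[Theorem 2.3.1]{CTSD}) shows that the image $\alpha$ of $(-1,u)$ in $H^2(\R(X),\Z/2)$ is unramified, i.e. $\alpha\in\Br(X)[2]$.

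For the second assertion I would use the injectivity criterion quoted above: the map $\Br(K)\to\Br(\R(X))$ is injective precisely when the discriminant of $q$ is not a square. Here the discriminant is $(1+u^2)u^2\equiv 1+u^2$ modulo squares, and $1+u^2$ is irreducible over $\R$, hence not a square in $\R(u)$; so $\Br(K)\to\Br(\R(X))$ is injective. Suppose, for contradiction, that $\alpha$ lay in the image of $\Br(\R)\to\Br(\R(X))$. Factoring this map through $\Br(K)$ and using injectivity, $(-1,u)$ would equal in $\Br(K)$ the image of some $\beta\in\Br(\R)=\{0,(-1,-1)\}$. But every such constant class is unramified at all closed points of $\P^1_{\R}$, whereas $(-1,u)$ has nonzero residue $(-1)$ at $u=0$; this contradiction shows $\alpha\notin\mathrm{Im}(\Br(\R))$. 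Consequently $\Br(\R)\to\Br(X)$ is not surjective.

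The step I expect to be the crux is the passage from \emph{residues supported on $T$} to \emph{$\alpha$ unramified on $X$}: at $u=0$ and $u=\infty$ the base residue $(-1)$ must die on every component of the special fibre of the smooth model, which is exactly what membership in $T$ guarantees, the degenerate fibre $x^2+y^2=0$ forcing $\sqrt{-1}$ into its function field so that $-1$ becomes a square there. If one prefers to avoid invoking the structural theorem, this can be checked directly: horizontal prime divisors carry no residue because $v_D$ is trivial on $\R(u)$; vertical divisors over points other than $0,\infty$ carry no residue because $\alpha$ extends to the local Brauer group of the base there; and over $0,\infty$ the tame symbol $\partial_D(-1,u)$ is the class of $(-1)^{v_D(u)}$ in $\R(D)^\times/\R(D)^{\times2}$, which vanishes since $\R(D)$ contains $\sqrt{-1}$.
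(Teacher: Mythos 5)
Your proposal is correct, and its non-constancy half coincides with the paper's own argument: injectivity of $\Br(\R(u))\to\Br(\R(X))$ from the non-square discriminant $u^2(1+u^2)$ (\cite[Prop. 7.2.4]{CTSkB}), then the nonzero residue of $(-1,u)$ at $u=0$. For the unramifiedness of $\alpha$, however, you take a genuinely different route. You work entirely on the base: you compute that $(-1,u)$ ramifies only at $u=0$ and $u=\infty$, check by Hensel's lemma that both points lie in $T$ (since $1+u^2$, resp. $1+v^2$, is a square in $\R((u))$, resp. $\R((v))$, the generic fibre becomes $x^2+y^2-u(z^2+t^2)=0$ over each completion), and then invoke the structural description of $H^2_{nr}(\R(X)/\R,\Z/2)$ from \cite{Sk} and \cite[Theorem 2.3.1]{CTSD} recalled at the start of the section. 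That is legitimate --- an element of a generating set lies in the group it generates --- and it makes the role of the set $T$ transparent; but it uses that theorem as a black box. The paper instead gives a self-contained valuation-theoretic proof: since $X$ is smooth, by purity it suffices to show that the residue of $\alpha$ vanishes at every discrete valuation $w$ on $\R(X)$ trivial on $\R$; this is clear when $w(u)$ is even, and when $w(u)$ is odd the defining equation forces $-1$ to be a square in the residue field $\kappa$ (if $-1$ were not a square in $\kappa$, every nonzero element of the form $f^2+g^2$, and likewise every nonzero value of $x^2+(1+u^2)y^2$, would have even valuation, and the equation $x^2+(1+u^2)y^2=u(z^2+t^2)$ would force $w(u)$ even). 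The paper's argument thus needs no information about the fibres of the smooth model and no appeal to the general theorem; yours is shorter given the quoted theorem and explains conceptually why only the points of $T$ matter.

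One caveat concerns your fallback ``direct check'': the assertion that every vertical prime divisor $D$ of the smooth model over $u=0$ or $u=\infty$ satisfies $\sqrt{-1}\in\R(D)$ is justified in your text only for the strict transform of the degenerate fibre $x^2+y^2=0$. The smooth model also has exceptional components coming from resolving the singularities of the naive model, and these must be checked as well (they do contain $\sqrt{-1}$, because the singular locus over each of these points is a conjugate pair of complex points, but that requires an argument you do not give). The paper's ``odd valuation'' computation is precisely the uniform way to avoid this issue, since it quantifies over all divisorial valuations at once; as written, your alternative verification has a gap at this step, though your main route through the structural theorem does not need it.
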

\begin{proof}
 L'application de restriction $\Br(\R(u)) \to \Br(\R(X))$ est injective  car le discriminant $u^2(1+u^2)$ 
 de la forme quadratique $x^2+(1+u^2) y^2 - u (z^2+t^2)$
 n'est pas un carr\'e dans $\R(u)$
 \cite[Prop. 7.2.4]{CTSkB}.
 La classe $(-1,u) \in H^2(\R(u), \Z/2)$ ne vient pas de $\Br(\R)$ 
 car son r\'esidu en $u=0$ n'est pas nul. 
 La classe $\alpha \in \Br(\R(X))$ n'est donc pas dans l'image de $\Br(\R)$.
Puisque $X$ est lisse sur $\R$, pour \'etablir l'\'enonc\'e 
 il suffit de montrer que $\alpha$ est non ramifi\'ee sur $\R(X)$ \cite{Santa}. Soit  
   $A \subset \R(X)$   un anneau de valuation discr\`ete, $w$ la valuation.
   Si  la valuation $w(u)$ est paire, alors  le r\'esidu de $(-1,u)$ est trivial.
Si la valuation $w(u)$ est impaire,  on v\'erifie sur l'\'equation  
que $(-1)$ est un carr\'e dans le corps r\'esiduel  de $A$, et donc le r\'esidu de $(-1,u)$ est trivial.
  \end{proof} 
 
En particulier, $X$ n'est pas universellement $CH_{0}$-triviale.
Remarquons que $\alpha \in \Br(X)$  n'est pas dans l'image de $\Br(\R)$ et en particulier est non nul,
 mais s'annule en tout point de $X(\R)$.

 \subsection{Hypersurface cubique singuli\`ere dans $\P^4_{\R}$}

 La vari\'et\'e de la proposition \ref{refX} est $\R$-birationnelle \`a l'hypersurface cubique
de $\A^4_{\R}$
d'\'equation affine
$$ x^2+ (1+u^2)   - u (z^2+t^2)=0.$$
Celle-ci admet un mod\`ele projectif  $Y \subset \P^4_{\R}$ d'\'equation
$$ x^2.y + y.(y^2+u^2) - u(z^2+t^2)=0$$
en coordonn\'ees homog\`enes $(x,y,u,z,t)$.
Le lieu singulier est form\'e des 4 points
non r\'eels donn\'es par 
$$(x,y,u,z,t) = (\pm i, 0,1,0,0), (0,0,0, \pm i, 1).$$
Ceci donne un exemple d'hypersurface cubique  $Y$ singuli\`ere dans $\P^4_{\R}$
avec un mod\`ele projectif et lisse $Y'$ satisfaisant $Y'(\R)$ connexe
et $\Br(Y')/\Br(\R) \neq 0$, donc $Y'$ non stablement rationnelle, ni m\^{e}me
universellement $CH_{0}$-triviale.

\medskip
Par comparaison,
la vari\'et\'e affine d'\'equation $x^2+y^2+z^2-u(u^2+1)=0$ sur $\R$ admet une compactification 
\'evidente comme 
hypersurface cubique $Y$ d'\'equation homog\`ene
$$ (x^2+y^2+z^2)t -u (u^2+t^2)=0.$$
Les points singuliers sont donn\'es par $u=t=0=x^2+y^2+z^2$.
C'est une conique plane sans point r\'eel.
Il n'y a pas de point r\'eel singulier.  
Nous avons vu \`a la section  \ref {scomp} (deuxi\`eme exemple) que tout mod\`ele projectif et lisse de $Y$
est universellement $CH_{0}$-trivial.

\subsection{Intersection singuli\`ere de deux quadriques dans $\P^5_{\R}$}
 
La vari\'et\'e de la proposition \ref{refX} est aussi $\R$-birationnelle \`a l'intersection de
deux quadriques  $W$ dans $\P^5_{\R}$
donn\'ee par les \'equations
 $$ x^2+ y^2+u^2   - u w=0.$$
$$ wy= z^2+t^2.$$
Le lieu singulier est form\'e des 4 points
non r\'eels donn\'es par 
$$(x,y,u,w,z,t)= (\pm i, 0, 1,0,0,0)$$
 $$(x,y,u,w,z,t)=(0,0,0,0,\pm i, 1).$$
 La section par $w=0$ est donn\'ee par 
 $x^2+y^2+u^2=0, z^2+t^2=0$. En particulier
 elle ne contient pas de point r\'eel. Ceci implique que
 $W$ ne contient pas de droite r\'eelle.
 Ceci donne un exemple d'intersection de deux quadriques  $Y$ dans $\P^5_{\R}$
avec un mod\`ele projectif et lisse $Y'$ satisfaisant $Y'(\R)$ connexe
et $\Br(Y')/\Br(\R) \neq 0$, donc $Y'$ non stablement rationnelle, ni m\^{e}me
universellement
$CH_{0}$-triviale.

\medskip
Par comparaison,
  en posant $w=u^2+1$,
   on peut voir la vari\'et\'e affine  d'\'equation  $$x^2+y^2+z^2-u(u^2+1)=0$$ 
   dont on a rappel\'e ci-dessus que tout mod\`ele projectif et lisse
   est universellement $CH_{0}$-trivial,
   comme
intersection 
de deux quadriques affines 
  $$x^2+y^2+z^2-uw=0,$$
  $$w=u^2+1.$$
  La compactification \'evidente   $Y \subset \P^5_{\R}$ donn\'ee par
$$x^2+y^2+z^2-uw=0,$$
  $$ u^2+t^2 -wt=0$$
  est singuli\`ere.
  Le lieu singulier est donn\'e par $$u=t=w=0=x^2+y^2+z^2,$$
  c'est une conique plane sans point r\'eel.
   La trace de l'hyperplan $w=0$ sur $Y(\R)$ est vide.
  Ainsi $Y$ ne contient aucune droite r\'eelle de $\P^5_{\R}$.

\subsection{La m\'ethode des jacobiennes interm\'ediaires}\label{IJTWitt}

Olivier Wittenberg nous informe que la m\'ethode des jacobiennes interm\'ediaires comme \'etablie dans
\cite{BW21}  donne le r\'esultat suivant.

\begin{thm}\label{wittenberg23}
Soit $k$ un corps de caract\'eristique diff\'erente de $2$. Soit $\pi : X \to \P^1_{k}$ une fibration en surfaces quadriques de type (I), avec au moins 6 fibres g\'eom\'etriques singuli\`eres. Soit $\Delta \to \P^1_{k}$ le rev\^etement double associ\'e, et soit $\beta \in \Br(\Delta)$ la classe associ\'ee \`a cette fibration. 
Supposons $\Delta(k) \neq \emptyset$.
Si $\beta$ n'est pas dans l'image de $\Br(k)$, alors la $k$-vari\'et\'e $X$ n'est pas $k$-rationnelle.
\end{thm}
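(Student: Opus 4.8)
The plan is to apply the intermediate Jacobian obstruction of Benoist--Wittenberg \cite{BW21}, in the torsor-theoretic form valid over an arbitrary field. First I would record that $X$ is a smooth projective threefold which is geometrically rational: the base change $X_{k_s}\to\P^1_{k_s}$ admits a section, so $X_{k_s}$ is $k_s$-rational, whence $H^i(X_{\overline{k}},\mathcal{O}_{X_{\overline{k}}})=0$ for $i>0$ and the intermediate Jacobian $J^3(X_{\overline{k}})$ is an abelian variety which, by Clemens--Griffiths over $\overline{k}$, is a product of Jacobians of curves. By \cite{BW21} this abelian variety descends to a principally polarized abelian variety $J:=J^3(X)$ over $k$, carrying a canonical torsor class $T\in H^1(k,J)$ that refines the Galois-equivariant Abel--Jacobi map on codimension-$2$ cycles homologous to zero and measures the obstruction to a universal codimension-$2$ cycle. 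The criterion I would invoke is the main theorem of \cite{BW21}: if $X$ is $k$-rational, then $T=0$. (Since the Jacobian factors appearing over $\overline{k}$ record the curves blown up, this obstruction bears only on $k$-rationality, not on stable rationality, consistent with the discussion of the introduction.)

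The second step is to identify $(J,T)$ for the quadric surface bundle in terms of the pair $(\Delta,\beta)$. Here I would use the conic-bundle description $q:Y\to\Delta$ recalled in \S\ref{rappelsCTSk}: the generic fibre of $\pi$ is the Weil restriction from $k(\Delta)$ to $k(\P^1)$ of the generic fibre of $q$, and $\beta$ is the class $\alpha\in\Br(\Delta)[2]$ of this conic bundle. Following Beauville's description of the intermediate Jacobians of quadric bundles, together with its arithmetic refinement in \cite{BW21}, one shows that $J$ is a Prym variety of dimension $g(\Delta)-1$ attached to this data, and, crucially, that the torsor class $T\in H^1(k,J)$ is governed by $\beta$, the relevant invariant being the image of $\beta$ in $\Br(\Delta)/\Im[\Br(k)\to\Br(\Delta)]$. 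The hypothesis $\Delta(k)\neq\emptyset$ enters precisely here, to provide a base point for the Abel--Jacobi and Prym constructions and to render the comparison of torsors canonical.

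The third step reads off the arithmetic conclusion. Since the double cover $\Delta\to\P^1$ is branched exactly over the singular geometric fibres, having at least $6$ of them forces $g(\Delta)\geq 2$, hence $\dim J=g(\Delta)-1\geq 1$; in particular $J\neq 0$, so $H^1(k,J)$, which contains $T$, need not vanish. This is exactly why the count $6$ (rather than $4$) is required: at most $4$ singular fibres give $g(\Delta)\leq 1$ and $J=0$, making the obstruction empty. I would then prove that $\beta\notin\Im[\Br(k)\to\Br(\Delta)]$ forces $T\neq 0$, contradicting the vanishing $T=0$ imposed by $k$-rationality in the first step. Hence $X$ is not $k$-rational.

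The main obstacle is the identification carried out in the second step: making precise, over an arbitrary field of characteristic different from $2$, the correspondence between the Benoist--Wittenberg intermediate Jacobian torsor $(J,T)$ of the threefold $X$ and the pair $(\Delta,\beta)$, and in particular the equivalence between non-constancy of $\beta$ (that is, $\beta\notin\Im\,\Br(k)$) and non-triviality of $T$. This requires the functoriality of the intermediate Jacobian under the conic-bundle/Weil-restriction correspondence of \cite{CTSk} and a careful Galois-cohomological computation of the descent datum carried by $\beta$. By contrast, verifying the hypotheses of the \cite{BW21} criterion, namely geometric rationality and the vanishing of the relevant coherent cohomology, both of which hold because $X$ is of type (I) with $X_{k_s}$ rational, should be routine.
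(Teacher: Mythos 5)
The paper records this theorem as a result communicated by O.~Wittenberg, with only a pointer to the method of \cite{BW21} and no written proof, and your outline does follow that same method; but as written it rests on a misstatement of the \cite{BW21} criterion, and this is a genuine gap. The theorem of \cite{BW21} is \emph{not} ``if $X$ is $k$-rational then $T=0$''. It says: if $X$ is $k$-rational, then there exists a smooth projective curve $C$ over $k$ such that $J\cong \Pic^0_{C/k}$ as principally polarized abelian varieties and every torsor $(\mathrm{CH}^2_{X/k})^{\gamma}$ is isomorphic to some $\Pic^{d}_{C/k}$. A $k$-rational threefold can perfectly well carry a nonzero torsor class: blow up $\P^3_{k}$ along a smooth curve of genus $\geq 1$ of index $>1$ (e.g.\ a pointless elliptic quartic over $\R$); the associated torsor is a nontrivial $\Pic^{1}$. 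So your step 3 (``$\beta\notin\Im[\Br(k)\to\Br(\Delta)]$ forces $T\neq 0$, contradicting the vanishing imposed by rationality'') does not contradict rationality. The repair uses exactly the two hypotheses of the statement, in roles different from the ones you assign them: at least $6$ singular geometric fibres gives $g(\Delta)\geq 2$, so $\operatorname{Jac}(\Delta)$ is an indecomposable principally polarized abelian variety and Torelli (in the hyperelliptic case) forces the curve $C$ in the criterion to be $\Delta$ itself; and then $\Delta(k)\neq\emptyset$ makes every $\Pic^{d}_{\Delta/k}$ a trivial torsor, so that only at this point does $k$-rationality force $T=0$. In your outline $\Delta(k)\neq\emptyset$ is demoted to ``providing a base point'', and the count $6$ to guaranteeing $J\neq 0$; neither is the actual function of these hypotheses.

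The numerics confirm the misreading. For a type (I) fibration with $N$ singular geometric fibres one has $g(\Delta)=(N-2)/2$ and $\dim J=h^{1,2}(X)=(N-2)/2=g(\Delta)$, not $g(\Delta)-1$ (for $N=6$ this gives $g=2$ and $\dim J=2$, as for smooth intersections of two quadrics in $\P^5$, cf.\ \cite{HT}, \cite{CT23}). In particular, with exactly $4$ singular fibres $J$ is an elliptic curve, not $0$: the method fails there not because $H^1(k,J)$ vanishes, but because the correct criterion is then vacuous --- any torsor $T$ under an elliptic curve $E$ is itself a smooth genus one curve with Jacobian $E$ and $T\cong\Pic^{1}_{T/k}$, hence automatically of Picard type. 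Note how your two errors mask each other: with the correct dimension $\dim J=g(\Delta)$ but your criterion ``rational $\Rightarrow T=0$'', already $4$ fibres would suffice, making the hypothesis ``at least $6$'' in the statement inexplicable. Finally, the crux you correctly flag --- that $T$ is the image of $\beta$ under the isomorphism $\Br(\Delta)/\Im[\Br(k)\to\Br(\Delta)]\simeq H^1(k,\Pic^0_{\Delta/k})$, valid since $\Delta(k)\neq\emptyset$ --- is left entirely unproved; that is acceptable in an outline, but combined with the incorrect criterion and the incorrect use of the hypotheses, the proposal as written does not yield the theorem.
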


Dans le cas $k=\R$, cela lui permet de donner de nombreux exemples
de fibrations de type (I) telles que $X(\R)$ est connexe mais $X$ n'est pas
$\R$-rationnelle. Pour les fibrations (\ref{eq})  consid\'er\'ees dans le pr\'esent article, 
la m\'ethode ne s'applique pas :
 la classe $\beta$  est non nulle mais dans l'image de $\Br(k)$.
De plus, si le polyn\^{o}me $P(u) \in \R[u]$ est de degr\'e 3,  la fibration $X \to \P^1_{\R}$ a seulement 4
fibres g\'eom\'etriques singuli\`eres.

\section{Intersections lisses de deux quadriques dans $\P^5_{k}$}\label{compdeuxquad}

Soit $Y \subset \P^5_{k}$ une intersection compl\`ete lisse de deux quadriques
d\'efinies par un syst\`eme $f=g=0$.
Supposons que $Y$ poss\`ede un $k$-point $M$. 
Soit $H \subset \P^5_{k}$
l'espace tangent \`a $Y$ en $M$. La famille des espaces lin\'eaires $\P^4_{k} \subset \P^5_{k}$
contenant $H$ d\'efinit une application rationnelle 
de $Y$ vers $\P^1_{k}$.  Dans \cite[\S  5.2]{CT23}, on montre qu'on obtient ainsi une $k$-vari\'et\'e projective et lisse $$X \subset \P^3_{k} \times_{k} \P^1_{k}$$
qui est $k$-birationnelle \`a $Y$. La fibration $\pi : X \to \P^1_{k}$
a pour fibres des quadriques.  
Les fibres g\'eom\'etriques  singuli\`eres sont d\'efinies par des formes de rang 3.
Il y a exactement 6 fibres g\'eom\'etriques singuli\`eres, correspondant aux z\'eros de la
forme homog\`ene $det(\lambda f+ \mu g)$.
 La fibration $X \to \P^1_{k}$ est de type (I).
 Le  rev\^{e}tement double associ\'e $\Delta \to \P^1_{k}$
 est donn\'e par $$z^2=- det (\lambda f+\mu g).$$

Comme $Y$ poss\`ede un point rationnel, la forme quadratique g\'en\'erique
$f+tg$ poss\`ede un z\'ero sur le corps $k(t)$, elle admet donc une
d\'ecomposition
$$ f+t g \simeq  <1,-1> \perp \Phi$$
avec $\Phi$ de rang 4 sur $k(t)=k(\P^1)$.
La fibre g\'en\'erique de la fibration $X \to \P^1_{k}$ 
est d\'efinie par $\Phi=0$.

La classe $\beta \in\Br(\Delta)$ est la classe associ\'ee \`a la forme
quadratique  $\Phi_{k(\Delta)}$,  qui est de rang 4, et de discriminant  
un carr\'e dans $k(\Delta)^{\times}$.

En utilisant \cite[Prop. 5.6]{CT23},  on v\'erifie, sous l'hypoth\`ese $Y(k) \neq \emptyset$, que
la classe $\beta \in \Br(k(\Delta))$ ici d\'efinie co\"{\i}ncide 
avec la classe $\alpha:= \alpha_{Y} \in \Br(k(\Delta))$
 associ\'ee \`a $Y$ dans \cite[\S 5.2]{CT23}.

La proposition suivante \'etend  \cite[thm. 5.10]{CT23}. Elle nous
a \'et\'e communiqu\'ee  par   Olivier Wittenberg.

\begin{prop}\label{510fin}
Soit $Y \subset \P^5_{k}$ une intersection lisse de deux quadriques donn\'ee par
un syst\`eme $f=g=0$.  Les propri\'et\'es suivantes sont \'equivalentes :
\begin{itemize}
\item [(i)] la $k$-vari\'et\'e $Y$ contient une droite sur $k$;
\item [(ii)] on a $\alpha_{Y}=0 \in \Br(k(\Delta))$;
\item [(iii)] la classe $\alpha_{Y} \in \Br(k(\Delta))$  est dans l'image de $\Br(k) \to \Br(k(\Delta))$.
\end{itemize}
Elles impliquent :
\begin{itemize}
\item[(iv)] la $k$-vari\'et\'e $Y$ est $k$-rationnelle.
\end{itemize}
 \end{prop}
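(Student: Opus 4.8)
The plan is to prove the cycle $(ii)\Rightarrow(iii)\Rightarrow(i)\Rightarrow(ii)$, taking the equivalence $(i)\Leftrightarrow(ii)$ from \cite[thm. 5.10]{CT23} together with the identification $\alpha_{Y}=\beta$ recalled just above, and then to establish $(i)\Rightarrow(iv)$ separately. The implication $(ii)\Rightarrow(iii)$ is immediate, since $0$ lies in the image of $\Br(k)\to\Br(k(\Delta))$. For $(i)\Leftrightarrow(ii)$ the point is that a $k$-line $\ell\subset Y$ is exactly the datum splitting the generic conic of the associated conic bundle $q:Y'\to\Delta$: through the correspondence of \cite[Thm. 2.5]{CTSk}, a $k(\Delta)$-point of that conic produces a $k$-line on $Y$ and conversely, so that $\alpha_{Y}\in\Br(k(\Delta))$ vanishes precisely when $Y$ contains a $k$-line. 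This is \cite[thm. 5.10]{CT23}, and I would simply cite it.

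The heart of the matter, and where I expect the main obstacle, is $(iii)\Rightarrow(i)$. Here I would pass to the Fano variety of lines. Since $\Delta$ is smooth, proper and geometrically connected of genus $2$, the Hochschild--Serre spectral sequence, together with the vanishing of $\Br(\Delta_{\overline{k}})$ and of $H^1(k,\Z)$, yields an exact sequence
$$\Br(k)\to\Br(\Delta)\to H^1(k,J_{\Delta})\to H^3(k,\mathbb{G}_{m}),$$
in which condition $(iii)$ says exactly that the image of $\alpha_{Y}$ in $H^1(k,J_{\Delta})$ is zero. The classical theory of lines on a smooth intersection of two quadrics in $\P^5$ (Reid, Desale--Ramanan) realizes the variety $F(Y)$ of lines on $Y$ as a torsor under the abelian surface $J_{\Delta}$, whose class $[F(Y)]\in H^1(k,J_{\Delta})$ vanishes if and only if $F(Y)(k)\neq\emptyset$, that is, if and only if $Y$ contains a $k$-line.

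The key input — which is Wittenberg's contribution, resting on the intermediate-jacobian torsor formalism of \cite{HT} and \cite{BW21} — is that, under the identifications above (using $Y(k)\neq\emptyset$ to pin down the base points), the image of $\alpha_{Y}$ in $H^1(k,J_{\Delta})$ coincides up to sign with the torsor class $[F(Y)]$. Granting this compatibility, $(iii)$ forces $[F(Y)]=0$, hence the existence of a $k$-line, which is $(i)$. The delicate point, and the one I expect to require the full strength of \cite{BW21}, is precisely the matching of these two invariants: the Brauer-theoretic one attached to the conic bundle $q:Y'\to\Delta$, and the geometric one attached to the parametrization of lines by $J_{\Delta}$.

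It remains to prove $(i)\Rightarrow(iv)$, which is classical and for which I would argue by projection from a line. Let $\ell\subset Y$ be a $k$-line; as $\ell\subset Q_1\cap Q_2$, for a general $2$-plane $\Pi\supset\ell$ each trace $\Pi\cap Q_i$ is a conic of $\Pi$ containing the line $\ell$, hence a pair of lines $\ell\cup\ell_i'$. Therefore $\Pi\cap Y=(\ell\cup\ell_1')\cap(\ell\cup\ell_2')=\ell\cup\{\ell_1'\cap\ell_2'\}$ consists of $\ell$ and a single residual point, defined over the field of definition of $\Pi$. The $2$-planes through $\ell$ form a $\P^3$, and the assignment $\Pi\mapsto\ell_1'\cap\ell_2'$ defines a rational map $\P^3\dashrightarrow Y$ whose inverse sends $y\in Y$ to the plane $\langle\ell,y\rangle$. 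This is a $k$-birational map, so $Y$ is $k$-rational, which closes the argument.
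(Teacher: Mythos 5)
Your handling of the easy parts is fine and agrees with what the paper leaves to the literature: $(ii)\Rightarrow(iii)$ is trivial, $(i)\Leftrightarrow(ii)$ is exactly \cite[thm.~5.10]{CT23}, and your projection-from-the-line argument for $(i)\Rightarrow(iv)$ is the correct classical proof. The problem is $(iii)\Rightarrow(i)$, which is precisely the new content of this proposition, and there your argument has a genuine gap: it rests entirely on the claimed compatibility that, under the map $\Br(\Delta)\to H^1(k,J_{\Delta})$ coming from the Leray/Hochschild--Serre sequence, the class $\alpha_{Y}$ maps to $\pm[F_{1}(Y)]$. You do not prove this; you ``grant'' it, and it cannot simply be extracted from the references you invoke. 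What \cite[thm.~5.10]{CT23} provides is the \emph{pointwise} statement that over any extension $L/k$ the class $\alpha_{Y}$ vanishes in $\Br(L(\Delta))$ if and only if $Y_{L}$ contains a line, i.e.\ if and only if the torsor $F_{1}(Y)_{L}$ has an $L$-point; two classes having the same vanishing behaviour over all extensions is far from an identification of one with the image of the other under a specific homomorphism. There is also a secondary defect: the exact sequence $\Br(k)\to\Br(\Delta)\to H^1(k,J_{\Delta})$ you write down requires $\Delta(k)\neq\emptyset$ (or at least the triviality of the torsor $\Pic^1_{\Delta}$); in general the spectral sequence only gives a map to $H^1(k,\Pic(\Delta_{\bar{k}}))$, which is the quotient of $H^1(k,J_{\Delta})$ by the class of $\Pic^1_{\Delta}$, and the proposition assumes no rational point on $Y$ or $\Delta$.

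The paper closes this implication by a short argument that avoids any cohomological matching, and you could repair your proof by substituting it for the unproven compatibility. Suppose $\alpha_{Y}$ is the image of $\gamma\in\Br(k)$, and let $Z$ be a Severi--Brauer variety of class $\gamma$. Over $k(Z)$ the class $\gamma$, hence also $\alpha_{Y}$, dies; applying \cite[thm.~5.10]{CT23} over the field $k(Z)$, the Fano variety of lines $F_{1}(Y)$ --- a torsor under an abelian variety $J$ --- acquires a $k(Z)$-point, i.e.\ there is a rational map from $Z$ to $F_{1}(Y)$. But any rational map from a geometrically rational variety to a torsor under an abelian variety is constant, so $F_{1}(Y)(k)\neq\emptyset$ and $Y$ contains a $k$-line; then $\alpha_{Y}=0$ by \cite[thm.~5.10]{CT23} again. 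This uses only the equivalence you already treat as a black box, now over an arbitrary extension field, together with the rigidity of maps from rational varieties to abelian torsors --- no identification of Brauer-theoretic and torsor-theoretic invariants inside $H^1(k,J_{\Delta})$ is needed.
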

\begin{proof}
Nous n'indiquons ici que le compl\'ement de d\'emonstration.
Supposons que la classe $\alpha$ est l'image de $\gamma \in \Br(k)$. Soit $Z$
une $k$-vari\'et\'e de Severi-Brauer de classe $\gamma \in \Br(k)$.
Sur le corps des fonctions $k(Z)$, la classe $\beta$ s'annule.
Soit $F_{1}(Y)$ la $k$-vari\'et\'e des droites de $Y$.
C'est un espace principal homog\`ene sous une $k$-vari\'et\'e ab\'elienne $J$.
D'apr\`es \cite[thm. 5.10]{CT23}, la classe de $F_{1}(Y)$
dans $H^1(k,J)$ s'annule alors dans $H^1(k(Z),J)$.
Cela signifie qu'il y a une $k$-application rationnelle
de $Z$ vers $F_{1}(Y)$. Mais toute  application rationnelle
d'une  vari\'et\'e g\'eom\'etriquement rationnelle vers
un espace homog\`ene d'une
vari\'et\'e ab\'elienne est constante.
Ainsi $F_{1}(Y)(k) \neq \emptyset$, donc $Y \subset \P^5_{k}$ 
contient une droite d\'efinie sur $k$. 
D'apr\`es \cite[thm. 5.10]{CT23}, ceci 
implique $\alpha=0 \in \Br(k(\Delta))$.
\end{proof}

\begin{rmk}
Supposons que $Y$ poss\`ede un point rationnel.
On consid\`ere une fibration $\pi : X \to \P^1_{k}$ associ\'ee.
Si l'on a $(ii)$, on peut aussi d\'eduire $(iv)$ de la mani\`ere suivante.
La fibration en coniques sur $\Delta$ associ\'ee est alors triviale.
La fibre g\'en\'erique de $\pi$ est la descendue \`a la Weil
pour l'extension $k(\Delta)/k(\P^1)$ 
de $\P^1_{k(\Delta)}$. Elle est donc birationnelle \`a $\P^2_{k(\P^1)}$
et donc $X$ est $k$-rationnelle. De fait il y a \'equivalence entre
le fait que la quadrique  fibre g\'en\'erique de $\pi : X \to \P^1_{k}$ a un $k(\P^1)$-point,
ou encore qu'elle est $k(\P^1)$-rationnelle, et   l'\'egalit\'e $\alpha_{X}=0 \in \Br(k(\Delta))$.
\end{rmk}

 \begin{rmk}
 C'est  un th\'eor\`eme d'O. Benoist et O. Wittenberg  \cite{BW21} que $(iv)$ implique
 les autres \'enonc\'es. Ceci n'est pas utilis\'e dans le corollaire suivant.
 
 \end{rmk}

\begin{cor}
Soit $\pi : X \to \P^1_{k}$ une  fibration en surfaces quadriques donn\'ee
par une \'equation affine
$$ x^2-ay^2-bz^2 = u.p(u)$$
avec $p(u)$ s\'eparable non constant et $p(0)\neq 0$, et avec  $(a,b)\neq 0 \in \Br(k)$.
 La fibration $\pi : X \to \P^1_{k}$
n'est pas birationnellement \'equivalente sur $\P^1_{k}$ \`a une fibration associ\'ee \`a
 une intersection compl\`ete lisse de deux
quadriques dans $\P^5_{k}$ poss\'edant un point rationnel.
 \end{cor}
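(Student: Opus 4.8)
Le plan est de raisonner par l'absurde en comparant les classes de quaternions associées aux deux fibrations et en invoquant la proposition \ref{510fin}. Rappelons (section \ref{surk}) que $\pi : X \to \P^1_k$ est de type (I), de courbe discriminant $\Delta : w^2 = -ab\,u\,p(u)$, et que sa classe associée $\alpha_X \in \Br(\Delta)$ est l'image de la classe de quaternions $(a,b) \in \Br(k)$ par l'application $\Br(k) \to \Br(\Delta)$. Supposons que $\pi$ soit birationnellement équivalente sur $\P^1_k$ à la fibration $\pi'' : X'' \to \P^1_k$ associée à une intersection complète lisse $Y \subset \P^5_k$ de deux quadriques munie d'un $k$-point ; d'après la section \ref{compdeuxquad}, $\pi''$ est encore de type (I), de courbe discriminant $\Delta''$ et de classe associée $\alpha_Y \in \Br(\Delta'')$.

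Je commencerais par montrer que $\alpha_X$ est non nulle et provient de $\Br(k)$. La courbe $\Delta$ possède le $k$-point $(u,w) = (0,0)$, où elle est lisse puisque $p(0) \neq 0$ ; ce point scinde l'application $\Br(k) \to \Br(\Delta)$, qui est donc injective. Comme $(a,b) \neq 0 \in \Br(k)$, on obtient $\alpha_X \neq 0 \in \Br(\Delta) \subset \Br(k(\Delta))$. Par le critère rappelé après la proposition \ref{510fin}, la fibre générique de $\pi$ est alors une surface quadrique anisotrope sur $k(\P^1)$.

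Il s'agit ensuite d'identifier les deux classes. La fibre générique de $\pi$ (resp. de $\pi''$) est la restriction de Weil à $k(\P^1)$, depuis $k(\Delta)$ (resp. $k(\Delta'')$), de la conique de classe $\alpha_X$ (resp. $\alpha_Y$) \cite[Thm. 2.5]{CTSk}. L'équivalence birationnelle sur $\P^1_k$ fournit une équivalence birationnelle, sur $k(\P^1)$, entre ces deux surfaces quadriques ; la première étant anisotrope, la seconde l'est aussi, et deux surfaces quadriques anisotropes birationnellement équivalentes sur un corps sont semblables. Elles ont donc même discriminant et même invariant de Clifford, d'où un isomorphisme $\Delta \simeq \Delta''$ au-dessus de $\P^1_k$ et l'égalité $\alpha_X = \alpha_Y$ dans $\Br(k(\Delta))$.

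On conclut : $\alpha_Y$ provenant, comme $\alpha_X$, de $\Br(k)$, la condition (iii) de la proposition \ref{510fin} est satisfaite, ce qui force $\alpha_Y = 0$, puis $\alpha_X = 0$, contredisant la première étape. Le point délicat est cette deuxième étape, c'est-à-dire que la classe $\alpha$ tout entière — et non sa seule classe modulo l'image de $\Br(k)$ — soit un invariant $\P^1_k$-birationnel des fibrations de type (I) à fibre générique anisotrope ; on le déduira de l'énoncé classique sur la similitude des formes quadratiques anisotropes de même corps des fonctions, appliqué à la fibre générique sur $k(\P^1)$.
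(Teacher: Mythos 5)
Your proof is correct, and its logical skeleton is the same as the paper's: the $k$-point $(u,w)=(0,0)$ of $\Delta$ splits $\Br(k)\to\Br(\Delta)$, so $\alpha_X=(a,b)$ is a nonzero \emph{constant} class in $\Br(\Delta)\subset\Br(k(\Delta))$, and the equivalence (ii)$\Leftrightarrow$(iii) of Proposition~\ref{510fin} then produces the contradiction. The difference is that the paper's three-sentence proof is silent on the point you rightly single out as delicate: why the pair (discriminant curve, Brauer class) attached to a type (I) fibration is invariant under birational equivalence over $\P^1_{k}$, that is, under mere $k(\P^1)$-birational equivalence of the generic fibres. Your treatment of that step is sound: both generic fibres are anisotropic (the first because $\alpha_X\neq 0$, via the Weil-restriction description of \cite[Thm.~2.5]{CTSk}; the second by Lang--Nishimura), and similar anisotropic forms of rank $4$ have equal discriminants and isomorphic even Clifford algebras, whence $\Delta\simeq\Delta''$ over $\P^1_k$ and $\alpha_X=\alpha_Y$. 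But the statement you call classical -- anisotropic quadric surfaces with $K$-isomorphic function fields have similar forms -- is a genuine theorem of Wadsworth (Similarity of quadratic forms and isomorphism of their function fields, Trans. Amer. Math. Soc. \textbf{208} (1975)), usually stated under the extra hypothesis that the two forms have the same discriminant; you should cite it precisely and supply the (true, but not free) supplement that the discriminant itself is a birational invariant of anisotropic quadric surfaces. One way to see this: an anisotropic rank-$4$ form remains anisotropic over its discriminant extension $L$, where the kernel of $\Br(L)\to\Br(L(Q))$ becomes $\{0,[C_0]\}\neq 0$, whereas a quadric surface with nonsquare discriminant has trivial such kernel (by Arason's subform theorem for quaternion classes, plus index reduction), so unequal discriminants are incompatible with isomorphic function fields. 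With these references in place your argument is complete; what it buys, compared with the paper's laconic proof, is precisely the documentation of the invariance step that the paper takes for granted.
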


\begin{proof} Comme le polyn\^{o}me $u.p(u)$ a un z\'ero, 
la courbe $\Delta$ poss\`ede un point rationnel. On a donc
$\alpha_{X/\P^1}=(a,b) \neq 0 \in \Br(\Delta) \subset \Br(k(\Delta))$. 
L'\'equivalence entre $(ii)$ et $(iii)$ dans la proposition \ref{510fin}
permet de conclure.
\end{proof}

\section{Corps de nombres et corps $p$-adiques}\label{corpsdenombres}

Soit $X\to \P^1_{\R}$ la fibration en surfaces quadriques donn\'ee par l'\'equation affine
$$x^2+y^2+z^2=u(u^2+1).$$

Le polyn\^ome $p(u)=u^2+1$ est une somme de 2 carr\'es dans $\Q[u]$.  D'apr\`es la preuve du th\'eor\`eme \ref{ex2}, la fonction $r(u,v)$ est une somme de 3 carr\'es dans $\Q(\sqrt{3})(u,v)$. D'apr\`es le th\'eor\`eme \ref{equivalencesk}, on a donc que pour $k=\Q(\sqrt{3})$ la $k$-vari\'et\'e $X$ d'\'equation affine $x^2+y^2+z^2=u(u^2+1)$ est universellement $CH_0$-triviale.

En g\'en\'eral, on ne peut pas esp\'erer \'etendre les r\'esultats  de $CH_{0}$-trivialit\'e sur $\R$ aux corps de nombres et corps $p$-adiques.	
	Soit $k$ un corps de caract\'eristique diff\'erente de $2$. Soit $X$ une $k$-vari\'et\'e projective
	et lisse munie d'un morphisme plat $\pi : X \to \P^1_{k}$ dont la fibre g\'en\'erique
	est une surface quadrique lisse. Supposons $X(k) \neq \emptyset$.
	Pour tout surcorps $L$ de $k$, le groupe $A_{0}(X)$ est d'exposant~2.
	Soit $k$ un corps de nombres. Notons $k_{v}$ le  compl\'et\'e de $k$
	en une place $v$ et $X_{v}:=X\times_{k}k_{v}$.
	Pour $v$ place complexe, $A_{0}(X_{v})=0$. Pour $v$ place r\'eelle, on a $A_{0}(X_{v})=(\Z/2)^{s_{v} -1}$,
	o\`u $s_{v}$ est le nombre de composantes connexes de l'espace topologique
	$X_{v}(k_{v})$ \cite{CTIschebeck}.
	 Pour presque toute place finie $v$, on a $A_{0}(X_{v})=0$ (\cite[Thm. 6.2 (vi)]{CTSk} et \cite[Thm. 5.3]{PS95}).
	  
        On a un complexe naturel de groupes ab\'eliens finis
	$$ A_{0}(X) \to \oplus A_{0}(X_{k_{v}}) \to {\rm Hom(}\Br(X)/\Br(k), \Q/\Z),$$
o\`u les accouplements
$$A_{0}(X_{k_{v}}) \times \Br(X)/\Br(k)  \to \Q/\Z$$
 sont donn\'es par l'\'evaluation. Pour $X$ une fibration en quadriques sur $\P^1_{k}$,
 ce complexe est une suite exacte (Wittenberg, \cite[Cor. 1.7]{W12}).

 \begin{thm} Soient $k$ un corps de nombres et $\pi : X \to \P^1_{k}$ une fibration en surfaces quadriques de type (I) 
 satisfaisant $X(k)\neq \emptyset$.
 On a un isomorphisme de groupes ab\'eliens finis de 2-torsion
 $$ A_{0}(X) \simeq \oplus_{v} A_{0}(X_{v}).$$
 \end{thm}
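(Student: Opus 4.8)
Le plan est de déduire la surjectivité de $A_0(X)\to\oplus_v A_0(X_v)$ de la suite de Wittenberg combinée à l'annulation du groupe de Brauer propre au type (I), puis d'obtenir l'injectivité par un principe de Hasse pour la cohomologie non ramifiée de la courbe $\Delta$. Pour la surjectivité, d'abord : la fibration étant de type (I) et $X(k)\neq\emptyset$, la flèche $\Br(k)\to\Br(X)$ est surjective (voir la section \ref{rappelsCTSk}), donc $\Br(X)/\Br(k)=0$ et par conséquent ${\rm Hom}(\Br(X)/\Br(k),\Q/\Z)=0$. L'exactitude au terme médian du complexe $A_0(X)\to\oplus_v A_0(X_v)\to{\rm Hom}(\Br(X)/\Br(k),\Q/\Z)$ de Wittenberg \cite{W12} identifie alors l'image de $A_0(X)\to\oplus_v A_0(X_v)$ au noyau d'une flèche de but nul, ce qui donne aussitôt la surjectivité.

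Pour l'injectivité, j'utiliserais le plongement $\Psi$ du théorème \ref{TheoremeCTSk}, qui est fonctoriel par rapport à l'extension des scalaires $k\hookrightarrow k_v$ puisqu'il provient des constructions naturelles $(\ref{p1dn})$, $(\ref{ddn})$ et $(\ref{modNdansH3})$. On dispose ainsi d'un carré commutatif
$$
\begin{CD}
A_0(X) @>{\Psi}>> H^3_{nr}(k(\Delta)/k,\Z/2)/\Im[H^1(k,\Z/2)\cup\alpha] \\
@VVV @VVV \\
\oplus_v A_0(X_v) @>>> \oplus_v H^3_{nr}(k_v(\Delta)/k_v,\Z/2)/\Im[H^1(k_v,\Z/2)\cup\alpha]
\end{CD}
$$
dont les flèches horizontales sont injectives. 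Il suffit dès lors de montrer que la flèche verticale de droite est injective : une classe de $A_0(X)$ triviale en toute place aura une image par $\Psi$ localement nulle, donc nulle, donc sera elle-même nulle. On est ainsi ramené à un principe de Hasse pour $H^3_{nr}(k(\Delta)/k,\Z/2)$ modulo l'image de $H^1(k,\Z/2)\cup\alpha$.

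Pour ce principe de Hasse, on notera d'abord que les groupes locaux $H^3_{nr}(k_v(\Delta)/k_v,\Z/2)$ sont nuls aux places complexes et à presque toute place finie, et valent $(\Z/2)^{S_v}$ aux places réelles (proposition \ref{ww}) ; la contribution potentiellement non triviale est donc supportée par les places réelles et un nombre fini de places finies, ce qui correspond bien au support de $\oplus_v A_0(X_v)$. Comme $u\,p(u)$ s'annule, la courbe $\Delta$ possède un point $k$-rationnel, de sorte que la partie constante $H^3(k,\Z/2)$ se scinde et satisfait le principe de Hasse, son image dans $\oplus_v H^3(k_v,\Z/2)$ étant exactement $\oplus_{v\ \text{r\'eelle}}\Z/2$. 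L'obstacle principal est de contrôler la partie \emph{non constante} de $H^3_{nr}(k(\Delta)/k,\Z/2)$, liée à $H^1(k,J_\Delta)$ et à $\Br(\Delta)$, où pourrait a priori surgir une obstruction de type Tate-Shafarevich. Pour la lever, j'exploiterais la description de l'image de $\Psi$ donnée par le plongement $\Phi$ du théorème \ref{TheoremeCTSk}(i), en termes de normes réduites de l'algèbre de quaternions $D/k(\Delta)$ : l'injectivité recherchée équivaut alors à un principe de Hasse pour l'appartenance d'une fonction de $k(\Delta)^\times$ à $k^\times N_D(k(\Delta))$, testée aux complétés $k_v(\Delta)=k(\Delta)\otimes_k k_v$. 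C'est cette dernière étape — le principe de Hasse pour les normes réduites de $D$ relativement aux places de $k$ — qui constitue le cœur de la démonstration ; je m'attends à la déduire du théorème d'Albert-Brauer-Hasse-Noether et des résultats de Kato sur la cohomologie des surfaces arithmétiques, l'hypothèse $\Delta(k)\neq\emptyset$ servant à éliminer la partie géométrique résiduelle.
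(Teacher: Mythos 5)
Votre moiti\'e \og surjectivit\'e \fg{} co\"{\i}ncide exactement avec celle de l'article : exactitude au terme m\'edian de la suite de Wittenberg \cite{W12} combin\'ee \`a la nullit\'e de $\Br(X)/\Br(k)$ pour une fibration de type (I) ; cette partie est compl\`ete. Pour l'injectivit\'e, votre r\'eduction --- via les plongements du th\'eor\`eme \ref{TheoremeCTSk}, leur compatibilit\'e \`a la compl\'etion, et l'annulation des groupes locaux en presque toute place --- \`a un principe de Hasse pour l'appartenance \`a $k^{\times}N_{D}(k(\Delta))$ est \'egalement la route suivie par l'article. Mais votre texte s'arr\^ete pr\'ecis\'ement l\`a o\`u commence le travail : le dernier paragraphe \emph{annonce} que ce principe de Hasse devrait se d\'eduire d'Albert--Brauer--Hasse--Noether et de Kato, sans donner l'argument. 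C'est une lacune r\'eelle. De plus, Albert--Brauer--Hasse--Noether n'est pas l'outil pertinent : l'\'enonc\'e \`a d\'emontrer vit sur le corps global de dimension $2$ qu'est $k(\Delta)$, et non sur $k$ ; les outils effectivement utilis\'es sont Merkurjev--Suslin, le th\'eor\`eme de Kato, et --- c'est l'id\'ee qui manque chez vous --- l'approximation faible dans $k$.

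Concr\`etement, voici ce que fait l'article et que votre proposition ne fournit pas. Le cup-produit avec $\alpha$ plonge $k(\Delta)^{\times}/N_{D}(k(\Delta))$ dans $H^3(k(\Delta),\Z/2)$ (proposition \ref{classique}, via \cite{MS}), en envoyant la partie $dn$ dans la cohomologie non ramifi\'ee, et le th\'eor\`eme de Kato \cite{Kato} donne l'injectivit\'e de $H^3(k(\Delta),\Z/2)\to\prod_{v}H^3(k_{v}(\Delta),\Z/2)$. Ceci traite le groupe de normes $N_{D}$, mais pas le quotient suppl\'ementaire par les constantes $k^{\times}$ : c'est exactement votre \og obstacle principal \fg{}, et il ne dispara\^{\i}t pas tout seul, car un quotient par $\Im[H^1\cup\alpha]$ n'h\'erite pas automatiquement d'un principe de Hasse. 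L'argument manquant est le suivant. Soit $S$ l'ensemble fini des places o\`u $H^3_{nr}(k_{v}(\Delta)/k_{v},\Z/2)\neq 0$. Si $\xi\in A_{0}(X)$ est localement nul, on obtient en chaque $v\in S$ un scalaire $\lambda_{v}\in k_{v}^{\times}$ tel qu'un rel\`evement $\xi_{1}\in k(\Delta)^{\times}_{dn}$ de $\Phi(\xi)$ soit \'egal \`a $\lambda_{v}$ modulo normes r\'eduites locales. Par approximation faible aux places de $S$, on choisit $\lambda\in k^{\times}$ proche de chaque $\lambda_{v}$ ; alors $\lambda/\lambda_{v}$ est un carr\'e local, donc une norme r\'eduite locale, de sorte que $(\xi_{1}/\lambda)\cup\alpha$ s'annule dans $H^3(k_{v}(\Delta),\Z/2)$ pour $v\in S$, et s'annule aussi pour $v\notin S$ puisqu'elle y est non ramifi\'ee. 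Le th\'eor\`eme de Kato donne alors $(\xi_{1}/\lambda)\cup\alpha=0$ globalement, d'o\`u $\xi_{1}/\lambda\in N_{D}(k(\Delta))$ et $\xi=0$. Sans ce recollement des constantes locales $\lambda_{v}$ en une constante globale $\lambda$, votre chasse au diagramme ne se referme pas. Notez enfin que cette d\'emonstration ne passe ni par $H^1(k,J_{\Delta})$ ni par un groupe de Tate--Shafarevich : la pr\'eoccupation que vous formulez \`a ce sujet est \'evit\'ee, et non r\'esolue, par la structure m\^eme de l'argument.
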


\begin{proof}
 Pour une fibration de type (I), on a  $\Br(X)/\Br(k)=0$. L'application
 diagonale  $$A_{0}(X) \to \oplus_{v} A_{0}(X_{k_{v}}) $$
  est donc surjective.   
  Montrons qu'elle est injective.
  Avec les notations de la
   section \ref{rappelsCTSk},
on a les injections
$$A_{0}(X) \hookrightarrow k(\Delta)_{dn}^{\times}/k^{\times}.N_{D}(k(\Delta))$$
$$A_{0}(X_{v}) \hookrightarrow
 k_{v}(\Delta)_{dn}^{\times}/
 k_{v}^{\times}. N_{D}(k_{v}(\Delta)).$$
 On a les injections
$$k(\Delta)^{\times}/ N_{D}(k(\Delta)) \hookrightarrow H^3(k(\Delta), \Z/2)$$
$$k_{v}(\Delta)^{\times}/ N_{D}(k(\Delta_{v})) \hookrightarrow H^3(k_{v}(\Delta), \Z/2).$$
On a donc les injections
$$k(\Delta)^{\times}_{dn}/ N_{D}(k(\Delta)) \hookrightarrow H^3_{nr}(k(\Delta), \Z/2)$$
$$ k_{v}(\Delta)_{dn}^{\times}/N_{D}(k(\Delta_{v})) \hookrightarrow H^3_{nr}(k_{v}(\Delta), \Z/2).$$
 D'apr\`es Kato  \cite{Kato}, pour presque toute place $v$,  le groupe  $H^3_{nr}(k_{v}(\Delta), \Z/2)$
est nul.   Il en est  de m\^{e}me  pour les groupes $A_{0} (X_{v} )$.
Soit $S$ l'ensemble fini des places $v$ avec  $A_{0}(X_{v}) \neq 0$.
 D'apr\`es Kato  \cite{Kato}
 l'application diagonale
$$H^3(k(\Delta), \Z/2) \to \prod_{v} H^3(k_{v}(\Delta), \Z/2)$$
est injective. Il est en donc de m\^eme de
$$H^3_{nr}(k(\Delta), \Z/2) \to \prod_{v\in S} H^3_{nr}(k_{v}(\Delta), \Z/2).$$
On a des suites exactes \'evidentes et compatibles avec les applications diagonales :
$$ k^{\times} \to k(\Delta)_{nr}^{\times}/N_{D}(k(\Delta)) 
\to k(\Delta)_{nr}^{\times}/k^{\times}. N_{D}(k(\Delta)) \to 1$$
$$ \prod_{v \in S}k_{v}^{\times} \to 
 \prod_{v \in S} k_{v}(\Delta)_{nr}^{\times}/N_{D}(k(\Delta_{v})) 
\to  \prod_{v \in S} k_{v}(\Delta)_{nr}^{\times}/k_{v}^{\times}. N_{D}(k(\Delta_{v})) 
\to 1.$$

Soit $\xi \in A_{0}(X)$ d'image  $\xi_{0} \in k(\Delta)_{nr}^{\times}/k^{\times}.N_{D}(k(\Delta))$
et d'image nulle dans tous les $A_{0}(X_{v})$.
Relevons $\xi_{0}$ en $\xi_{1} \in k(\Delta)_{nr}^{\times}/N_{D}(k(\Delta))$.
Comme l'image de $\xi$ dans les groupes $A_{0}(X_{v})$ est nulle,
pour chaque $v\in S$, il existe $\lambda_{v} \in k_{v}^{\times}$ 
d'image $\xi_{1,v}  \in k_{v}(\Delta)_{nr}^{\times}/N_{D}(k(\Delta))$.
Par approximation faible aux places de $S$,
on trouve $\lambda \in k^{\times}$ tel que $\xi_{0}/\lambda \in 
k(\Delta)_{nr}^{\times}/N_{D}(k(\Delta))$ ait une image nulle par passage aux
compl\'et\'es $k_{v}$ pour $v\in S$. Ceci implique 
$$\xi_{0}/\lambda =1  \in 
k(\Delta)_{nr}^{\times}/N_{D}(k(\Delta))$$ et donc $\xi=0 \in A_{0}(X)$.
   \end{proof}
 
 Ainsi, s'il existe une place $v$ telle que le groupe  $A_{0}(X_{k_{v}})$ n'est pas trivial, alors  $A_0(X)\neq 0$. En particulier, c'est le cas dans l'exemple qui suit.

Dans \cite[Proposition 6.2, p.108]{PS95}
 Parimala et
Suresh montrent  que pour la famille
$X\to \P^1_{\Q}$
d'\'equation   affine
$$x^2+y^2+3z^2= 3u(u+2)(u+3)$$
le groupe 
$A_0(X_{\Q_{3}})$ 
n'est pas nul. 
 En multipliant cette \'equation par $9$ et en faisant un changement de variables, on voit que  la  $\Q$-vari\'et\'e $X$ est isomorphe \`a la vari\'et\'e d'\'equation affine 
$$x^2+y^2+3z^2= u(u+6)(u+9).$$

Par approximation faible,  on trouve un polyn\^{o}me $p(u) \in \Q[u]$ de degr\'e 2
proche de  $(u+6)(u+9)\in  \Q_{3}[u]$ et de $u^2+1 \in \R[u]$.
On obtient ainsi un exemple de fibr\'e en surfaces quadriques 
 $X \to \P^1_{\Q}$  de type (I)   d'\'equation affine
 $$x^2+y^2+3z^2= u.p(u)$$
 avec $A_{0}(X_{\R})=0$, 
 $A_{0}(X_{\Q_{3}   })\neq 0$ 
 et $A_{0}(X) \neq 0$.

Notons que dans cet exemple sur $\Q_{3}$
 la condition $$((u+6)(u+9),-1,-3)=0 $$ du th\'eor\`eme \ref{equivalencesk}
n'est pas satisfaite, comme on voit par r\'esidu
$(-1,-3)_{\Q_3}  \neq 0.$

 \end{document}